\newcommand{\C}{{\mathbb C}}       %
\newcommand{\R}{{\mathbb R}}       %
\newcommand{\DD}{{\mathcal D}}
\newcommand{\HH}{{\mathcal H}}
\newcommand{\LL}{{\mathcal L}}
\newcommand{\AZ}{{\mathcal A}}
\newcommand{\RR}{{\mathcal R}}
\newcommand{\FF}{{\mathcal F}}
\newcommand{\EE}{{\mathcal E}}
\newcommand{\MD}{{\mathcal {MD}}}
\newcommand{\CC}{{\mathcal C}}
\newcommand{\diam}{\operatorname{diam}}
\newcommand{\dist}{{\rm dist}}
\newcommand{\rf}[1]{{\eqref{#1}}}
\newcommand{\supp}{\operatorname{supp}}
\newcommand{\ve}{{\varepsilon}}
\newcommand{\vv}{{\vspace{2mm}}}
\newcommand{\vvv}{{\vspace{3mm}}}
\newcommand{\wt}[1]{{\widetilde{#1}}}
\newcommand{\wh}[1]{{\widehat{#1}}}
\newcommand{\LD}{{\mathsf{LD}}}
\newcommand{\HD}{{\mathsf{HD}}}
\newcommand{\sss}{{\mathsf{Stop}}}
\newcommand{\nex}{{\mathsf{Next}}}
\newcommand{\ttt}{{\mathsf{Top}}}
\newcommand{\good}{{\mathsf{Good}}}
\newcommand{\tree}{{\mathsf{Tree}}}
\newcommand{\tr}{{\mathsf{Tr}}}
\newcommand{\BCE}{{\mathsf{BCE}}}
\newcommand{\sep}{{\mathsf{Sep}}}
\newtheorem{theorem}{Theorem}[section]
\newtheorem*{theorem*}{Theorem}
\newtheorem*{lemma*}{Lemma}
\newtheorem*{theorema*}{Theorem A}
\newtheorem*{theoremb*}{Theorem B}
\newtheorem*{theoremc*}{Theorem C}
\newtheorem*{mainlemma*}{Main Lemma}
\newtheorem{lemma}[theorem]{Lemma}
\newtheorem{coro}[theorem]{Corollary}
\newtheorem{propo}[theorem]{Proposition}
\theoremstyle{definition}
\def\XXint#1#2#3{{\setbox0=\hbox{$#1{#2#3}{\int}$ }
\vcenter{\hbox{$#2#3$ }}\kern-.58\wd0}}
\theoremstyle{remark}
\newtheorem{remark}[theorem]{\bf Remark}
\numberwithin{equation}{section}
\begin{document}

\title{Analytic capacity and projections}

\author{Alan Chang}
\author{Xavier Tolsa}

\address{Alan Chang\\
Department of Mathematics\\
University of Chicago, Chicago, IL 60637, USA}

\email{ac@math.uchicago.edu}

\address{Xavier Tolsa
\\
ICREA, Passeig Llu\'{\i}s Companys 23 08010 Barcelona, Catalonia, and\\
Departament de Matem\`atiques and BGSMath
\\
Universitat Aut\`onoma de Barcelona
\\
08193 Bellaterra (Barcelona), Catalonia
}

\email{xtolsa@mat.uab.cat}

\thanks{A.C. was supported by the National Science Foundation grants DGE-1144082, DGE-1746045, and DMS-1246999. X. T. was supported by the ERC grant 320501 of the European Research Council (FP7/2007-2013) and partially supported by MTM-2016-77635-P,  MDM-2014-044 (MICINN, Spain), and by Marie Curie ITN MAnET (FP7-607647).}

\begin{abstract}
In this paper we study the connection between the analytic capacity of a set and the size of its orthogonal projections. More precisely, we prove that if $E\subset \C$ is compact and $\mu$ is a Borel measure supported on $E$, then
 the analytic capacity of $E$ satisfies
$$
\gamma(E) \geq c\,\frac{\mu(E)^2}{\int_I \|P_\theta\mu\|_2^2\,d\theta},
$$
where $c$ is some positive constant, $I\subset [0,\pi)$ is an arbitrary interval, and $P_\theta\mu$ is the image measure of
$\mu$ by $P_\theta$, the orthogonal projection onto the line $\{re^{i\theta}:r\in\R\}$. This result is
related to an old conjecture of Vitushkin about the relationship between the Favard length and analytic capacity.
We also prove a generalization of the above inequality to higher dimensions which involves related capacities associated with
signed Riesz kernels.
\end{abstract}

\maketitle

\section{Introduction}

The objective of this paper is to study the connection between the analytic capacity of a set and the size of its projections
 onto lines. First we introduce some notation and definitions.
A compact set $E\subset \C$ is said to be {\em removable for bounded analytic
functions} if for any open set $\Omega$ containing $E$, every
bounded function analytic on $\Omega\setminus E$ has an analytic
extension to $\Omega$. In order to study removability,  in the 1940s, Ahlfors \cite{Ahlfors} 
introduced the notion of analytic capacity.
The {\em analytic capacity} of a compact set $E\subset\C$ is
\begin{equation}\label{defgam}\gamma(E) = \sup|f'(\infty)|,
\end{equation}
where the supremum is taken over all analytic functions
$f:\C\setminus E \to \C$ with $|f|\leq1$ on $\C\setminus E$, and
$f'(\infty)=\lim_{z\to\infty} z(f(z)-f(\infty))$.
In \cite{Ahlfors}, Ahlfors showed that $E$ is removable for bounded analytic functions if and only if 
$\gamma(E)=0$.

In the 1960s, Vitushkin conjectured that a compact set in the plane is non-removable for bounded analytic functions (or equivalently, has positive analytic capacity) if and only if its orthogonal projections have positive length in a set of directions of positive measure, or in other words, if and only if it has positive Favard length.
The {\em Favard length} of a Borel set $E\subset\C$ is
\begin{equation} \label{favard}
{\rm Fav}(E) =\int_0^\pi \HH^1(P_\theta(E))\,d\theta,
\end{equation}
where, for $\theta\in [0,\pi)$,  $P_\theta$ denotes the orthogonal
projection onto the line $L_\theta := \{re^{i\theta}:r\in\R\}$, and $\HH^1$ stands for the $1$-dimensional Hausdorff measure. 
In 1986, Mattila \cite{Mattila-proj} showed that Vitushkin's conjecture is false. 
Indeed, he proved that the 
property of having zero Favard length is not invariant under conformal mappings while removability for bounded
analytic functions remains invariant. Mattila's result didn't tell which implication in the above conjecture
is false. This question was partially solved in 1988, when Jones and Murai \cite{Jones-Murai} 
constructed a set with zero Favard length and positive analytic 
capacity. Later on, Joyce and M\"orters \cite{JyM} later obtained an easier example using curvature of measures.

Although Vitushkin's conjecture is not true in full generality, it turns out that it holds in the particular case of
sets with finite length. This was proved by G.~David \cite{David-vitus} in 1998. 
Indeed, he showed that such sets are removable if and only if they are purely unrectifiable, which is equivalent to having zero Favard length, by the Besicovitch projection theorem.
For sets with arbitrary length, removability can be characterized in terms of curvature of measures, by \cite{Tolsa-sem}
(see Theorem \ref{teogam} below for more details).

As mentioned above, one of the directions of Vitushkin's conjecture is false.
 However, it is not known yet if the other implication holds. Namely, does positive Favard length imply positive analytic capacity?   In a sense, the main result of this paper asserts that if one strengthens the assumption of positive Favard length in a suitable way, then the answer is positive. See the survey \cite{EV} for more information on this and other related questions.

Given a Borel measure $\mu$ in $\R^2$, we denote by
$P_\theta\mu$ the image measure of $\mu$ by the orthogonal projection $P_\theta$ from
$\R^2$ onto the line $L_\theta := \{re^{i\theta}:r\in\R\}$.
Our main result is the following:

\vv

\begin{theorem}\label{teo1}
Let $I\subset [0,\pi)$ be an interval. For any compact set $E\subset \C$ and any Borel measure $\mu$
on $\C$ with $\int_I \|P_\theta\mu\|_2^2\,d\theta > 0$, we have
\begin{equation}\label{eqgamfav}
\gamma(E) \geq c\,\frac{\mu(E)^2}{\int_I \|P_\theta\mu\|_2^2\,d\theta},
\end{equation}
where $c$ is some positive constant depending only on $\HH^1(I)$.
\end{theorem}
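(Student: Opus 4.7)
My approach is to combine Tolsa's characterization of analytic capacity with a careful analysis tying Menger curvature to projection $L^2$-norms. Recall that Tolsa's theorem furnishes, for every positive Radon measure $\nu$ with $\supp\nu\subset E$ and linear growth $\nu(B(x,r))\le r$, the bound
\begin{equation*}
\gamma(E)\gtrsim \frac{\nu(E)^2}{\nu(E)+c^2(\nu)},
\end{equation*}
where $c^2(\nu)=\iiint R(x,y,z)^{-2}\,d\nu(x)d\nu(y)d\nu(z)$ is the Menger curvature ($R(x,y,z)$ being the circumradius of the triangle with vertices $x,y,z$). My plan is to construct from the hypothesis measure $\mu$ an auxiliary measure $\nu\le A\mu$ enjoying (a) $\nu(E)\gtrsim \mu(E)$, (b) linear growth, and (c) $c^2(\nu)\lesssim \int_I\|P_\theta\mu\|_2^2\,d\theta$. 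Plugging (a)--(c) into the displayed inequality immediately yields the theorem.

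Properties (a) and (b) should follow from a stopping-time procedure: for a large constant $A$, remove from $\mu$ the mass contained in the maximal balls $B(x,r)$ with $\mu(B(x,r))>Ar$. Control of the removed mass comes from the observation that a unit of mass concentrated in a ball of radius $r$ forces a density bump of $L^2$-mass $\gtrsim 1/r$ in \emph{every} projection $P_\theta\mu$; a large total of high-density balls would then make $\int_I\|P_\theta\mu\|_2^2\,d\theta$ exceed the hypothesised value (with constant depending on $\HH^1(I)$), which is impossible. For the curvature bound (c), the guiding identity is
\begin{equation*}
\int_0^\pi \|P_\theta\mu\|_2^2\,d\theta = c\int_{\R^2}\frac{|\hat\mu(\xi)|^2}{|\xi|}\,d\xi \asymp \iint\frac{d\mu(x)d\mu(y)}{|x-y|},
\end{equation*}
obtained from Plancherel in each direction and polar coordinates, so restriction to $\theta\in I$ yields a directional $1$-energy $\iint K_I(x-y)\,d\mu(x)d\mu(y)$ for a kernel $K_I$ homogeneous of degree $-1$ whose Fourier transform is supported in the sector dual to $I$. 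To reach $c^2(\nu)$, I would use the formula $c(x,y,z)^2 = 4\,\mathrm{Area}(xyz)^2/(|x-y||y-z||x-z|)^2$ and express $\mathrm{Area}(xyz)$ as a determinant of the projection differences $P_\theta(y-x),P_\theta(z-x)$, then combine Fubini with Cauchy--Schwarz to dominate the triple curvature integral by the restricted projection energy.

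The principal obstacle is the degeneracy of this strategy for triples $(x,y,z)$ whose sides point in directions lying \emph{inside} the cone $I$: for such triples the projections $P_\theta x,P_\theta y,P_\theta z$ remain close together on $L_\theta$ for every $\theta\in I$, so the kernel $K_I$ is essentially blind to these pairs and the direct Cauchy--Schwarz comparison collapses. I expect to resolve this by decomposing $\mu$ dyadically in a frame adapted to $I$ and separating, in each triangle, the tangential sides (nearly parallel to $I$) from the transverse ones: the transverse sides carry the detectable curvature, while the tangential contributions are absorbed either into the linear-growth term $\nu(E)$ in Tolsa's inequality or handled by iterating the stopping procedure at a smaller scale. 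The higher-dimensional generalisation mentioned in the abstract suggests, moreover, that the argument should be formulated directly in terms of signed Riesz-kernel capacities, with the Menger quantity $c^2$ replaced by the permutation integral $p^2$ associated with the Cauchy kernel, which should accommodate the necessary cancellations.
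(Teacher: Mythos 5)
Your high-level skeleton is correct: Tolsa's curvature characterization of $\gamma$, a Fourier identity converting $\int_I\|P_\theta\mu\|_2^2\,d\theta$ into a conical one-dimensional Riesz energy, and a reduction to a linear-growth measure. You also correctly identify the central obstruction, namely triples whose sides point nearly along the direction the cone is blind to. But the step you wave at there is where the entire difficulty of the paper lives, and your proposed resolution does not work as described.

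The paper does \emph{not} compare $c^2(\nu)$ with the conical energy $\iint_{x-y\in K}|x-y|^{-1}d\mu\,d\mu$ via Cauchy--Schwarz on areas. Such a direct comparison fails for exactly the tangential triples you worry about, and these cannot be ``absorbed into the linear-growth term'': they may carry most of the curvature. Instead, the paper builds a corona decomposition (Lemma~\ref{lemcorona}) on the David--Mattila lattice. For each top cube $R$ one runs a stopping-time on high-density, low-density, and big-conical-energy cubes (the last using the quantity $\EE_\mu$ of \eqref{eq:defn-EEmuQ}), and then proves (Lemma~\ref{keylemma} and its consequences) that the non-stopped part of $R$ lies on a Lipschitz graph $\Gamma_R$ transverse to the cone, while the low-density and big-energy stopping cubes have small measure. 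The packing estimate $\sum_{R\in\ttt}\Theta_\mu(2B_R)\,\mu(R)\lesssim\mu(R_0)+\iint_{x-y\in K}|x-y|^{-1}d\mu\,d\mu$ then feeds into a known corona-to-curvature estimate (Lemma~\ref{lemcorona2}, from \cite{Tolsa-bilip}) to conclude $c^2(\mu)\lesssim\|\mu\|+\iint_{x-y\in K}|x-y|^{-1}d\mu\,d\mu$. This multi-scale construction of approximate Lipschitz graphs is the real content, and your ``separate tangential from transverse sides and iterate'' is not a substitute for it.

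There is also a gap in your linear-growth reduction. You propose removing maximal high-density balls and claim the removed mass is controlled because each such ball forces an $L^2$ bump in every projection. The bumps from distinct balls need not add: distinct balls can have overlapping projections, so the contributions to $\|P_\theta\mu\|_2^2$ are not additive, and your estimate as stated does not close. The paper avoids this entirely by a single-direction trick: choose $\theta_0\in I$ with $\|P_{\theta_0}\mu\|_2^2\le \HH^1(I)^{-1}\int_I\|P_\theta\mu\|_2^2\,d\theta$, apply Chebyshev on the line $L_{\theta_0}$ to find a set $F_0$ of mass $\ge\frac14\mu(E)$ where the density of $P_{\theta_0}\mu$ is bounded, and restrict $\mu$ to $P_{\theta_0}^{-1}(F_0)$. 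After rescaling by $\HH^1(I)/(4\lambda)$ this measure has linear growth with constant $1$, because any ball projects to an interval of comparable length. This is both simpler and correct, whereas your version requires a further idea to handle overlaps.
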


In the theorem above,   $\|P_\theta\mu\|_2$ stands for the $L^2$ norm of the density of $P_\theta\mu$
with respect to length in $L_\theta$, and the $L^2$ norm is computed with respect to length in $L_\theta$ too.

Observe that if the Hausdorff dimension of $E$ is larger than $1$, by Frostman's lemma there exists
a nonzero measure $\mu$ supported on $E$ such that $\mu(B(x,r))\leq r^s$, with $s>1$. Then it holds that
\begin{equation}\label{eq**r}
 \int_0^\pi \|P_\theta\mu\|_2^2\,d\theta\leq c\,\iint \frac1{|x-y|}\,d\mu(x)\,d\mu(y)<\infty.
\end{equation} 
  See Theorem 9.7 in \cite{Mattila-gmt}, for example. So \rf{eqgamfav} implies the well-known fact that $\gamma(E)>0$ in this case. In fact, from \rf{eq**r} and \rf{eqgamfav} we get the sharper (also well-known) inequality $\gamma(E)\geq c\, C_1(E)$, where $C_1$ is the capacity associated with the Riesz kernel $1/|x|$. 
  
  On the other hand, there are sets $E$ in the plane with  $C_1(E)=0$ which support a nonzero Borel measure $\mu$ such that
$\int_I \|P_\theta\mu\|_2^2\,d\theta<\infty$ for some interval $I\subset[0,\pi)$. This is the case, for example, of any rectifiable set $E$ with
$\HH^1(E)<\infty$.  To see this, just consider a Lipschitz graph $\Gamma$ such that $\HH^1(E\cap\Gamma)>0$ and let $\mu=\HH^1|_{E\cap\Gamma}$. If $L_{\theta_0}$ denotes the horizontal axis with respect to which $\Gamma$ is constructed and $I$ is a sufficiently small neighborhood of $\theta_0$, then $\int_I \|P_\theta\mu\|_2^2\,d\theta<\infty$, as desired.

It is worth mentioning that the Favard length of $E$ satisfies an estimate very similar to~\rf{eqgamfav}:
\begin{equation}\label{eqfavfav}
{\rm Fav}(E) \geq c_I\,\frac{\mu(E)^2}{\int_I \|P_\theta\mu\|_2^2\,d\theta},
\end{equation}
for some constant $c_I>0$.
This follows easily from the Cauchy-Schwarz inequality. Indeed, for any Borel measure $\mu$ supported on $E$ and
$\theta\in [0,\pi)$,
\begin{equation}\label{eq:cauchyschwarz1}
\mu(E)= \|P_\theta\mu\|_1\leq \|P_\theta\mu\|_2\,\HH^1(P_\theta E)^{1/2}.
\end{equation}
Thus,
$$\mu(E) \,\HH^1(I)\leq \int_I  \|P_\theta\mu\|_2\,\HH^1(P_\theta E)^{1/2}\,d\theta
\leq \left(\int_I  \|P_\theta\mu\|_2^2\,d\theta\right)^{1/2}\,{\rm Fav}(E)^{1/2},$$
which yields \rf{eqfavfav} with $c_I=\HH^1(I)^2$.
So a natural question is the following: does there exist some absolute constant $c>0$ such that
\begin{equation}\label{quest}
\gamma(E) \geq c\,{\rm Fav}(E)\,?
\end{equation}
If the answer were positive, then Theorem \ref{teo1} would be an immediate consequence of this and \rf{eqfavfav}.

The question \rf{quest} is widely open, and we think that
Theorem \ref{teo1} can be interpreted as a contribution that supports a positive answer.
However, we remark that the assumption that ${\rm Fav}(E)>0$ is strictly stronger than the existence of a measure $\mu$ supported on $E$ satisfying $P_\theta\mu\in L^2$ for a.e.\ $\theta$ in some interval $I\subset[0,\pi)$. Indeed, 
there exists a set $E \subset \R^2$ such that the following hold:
\begin{enumerate}
\item $\HH^1(P_\theta E ) > 0$ for a.e.~$\theta \in [0, \pi)$.
\item For all Borel measures $\mu$ such that $\mu(E) > 0$ and all intervals $I \subset [0, \pi)$, we have $\HH^1(\{ \theta \in I : P_\theta \mu \not\in L^2 \}) > 0$.
\end{enumerate}
%

To construct a set $E$ satisfying (1) and (2), let $\{I_k\}_{k=1}^\infty$ be a sequence of subsets of $[0, \pi)$ such that $\HH^1(\bigcap_k I_k) = 0$ and such that $\HH^1(I \cap I_k) > 0$ for all $k$ and all intervals $I \subset [0, \pi)$. (For example, we can take each $I_k$ to be open and dense in $[0, \pi)$, with $\HH^1(I_k) \to 0$.) By the digital sundial theorem \cite{Falconer}, for each $k$, there is a set $E_k \subset \R^2$ such that $\HH^1(P_\theta E_k) = 0$ for a.e.~$\theta \in I_k$ and $\HH^1(P_\theta E_k) > 0$ for a.e.~$\theta \not\in I_k$. Let $E = \bigcup_k E_k$. 

Property (1) immediately follows from the facts that $\HH^1(P_\theta E) > 0$ for a.e.~$\theta \not\in \bigcap_k I_k$ and that $\HH^1(\bigcap_k I_k) = 0$. To check (2), let $\mu$ be a Borel measure such that $\mu(E) > 0$. Then $\mu(E_k) > 0$ for some $k$, so using \eqref{eq:cauchyschwarz1}, we see that if $\theta \in [0, \pi)$ is such that $P_\theta (\mu|_{E_k}) \in L^2$, then $\HH^1(P_\theta E_k) > 0$. Hence, for any interval $I \subset [0, \pi)$, we have
\[
\HH^1(\{ \theta \in I : P_\theta \mu \not\in L^2 \})
\geq
\HH^1(\{ \theta \in I : \HH^1(P_\theta E_k) = 0 \})
=
\HH^1(I_k \cap I)
>
0,
\]
and we have verified (2).

\vv

The result stated in Theorem \ref{teo1} extends to higher dimensions. In $\R^d$
the role of the analytic capacity $\gamma$ is played by the capacities $\Gamma_{d,n}$ associated to the vector-valued Riesz kernels $x/|x|^{n+1}$.
Given an integer $0<n<d$ and a compact $E\subset\R^d$,  one sets
$$\Gamma_{d,n}(E) = \sup|\langle T,1\rangle|,$$
where the supremum is take over all real distributions $T$ supported in $E$ such that 
$$\frac x{|x|^{n+1}} * T \in L^\infty(\R^d)\quad\mbox{ and }\quad \Bigl\|\frac x{|x|^{n+1}} * T\Bigr\|_{L^\infty(\R^d)}\leq 1.$$
In the case $n=1$, $d=2$, $\Gamma_{2,1}$ is the real version of the analytic capacity $\gamma$, and from \cite{Tolsa-sem}
it holds that $\gamma\approx\Gamma_{2,1}$. 

\vv

In the codimension $1$ case (i.e., $n=d-1$), $\Gamma_{d,d-1}$ is the so-called Lipschitz harmonic capacity introduced by Paramonov
\cite{Paramonov}.
The analogue of Vitushkin's conjecture also holds for sets with finite $\HH^n$-measure. That is, $E$ removable for
Lipschitz harmonic functions in $\R^{n+1}$ if and only if $E$ is purely $n$-unrectifiable,
or equivalently, the orthogonal projections of $E$ on almost all hyperplanes have $\HH^n$-measure zero. See \cite{NToV1} and \cite{NToV2}.
The analogous result for $1<n<d-1$ is still an open problem. %

The higher dimensional extension of Theorem \ref{teo1} is the following:

\begin{theorem}\label{teo2}
For an integer $n$ with $0<n<d$,
let $V_0\subset\R^d$ be an $n$-plane through the origin and let $s>0$.
For any compact set $E\subset \R^d$ and any Borel measure $\mu$
on $\R^d$ with $\int_{B(V_0, s)} \| P_V\mu \|_2^2 \, d \gamma_{d,n}(V) > 0$, we have
$$\Gamma_{d,n}(E) \geq c\,\frac{\mu(E)^2}{\int_{B(V_0, s)} \| P_V\mu \|_2^2 \, d \gamma_{d,n}(V)},$$
where $c$ is some positive constant depending only on $s$, $n$, and $d$.
\end{theorem}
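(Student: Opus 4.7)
My plan is to follow the scheme of the proof of Theorem~\ref{teo1}, with the Cauchy kernel $1/z$ replaced by the vector Riesz kernel $K_n(x):=x/|x|^{n+1}$, the interval $I\subset[0,\pi)$ replaced by the Grassmannian ball $B(V_0,s)$, and the analytic capacity $\gamma$ replaced by $\Gamma_{d,n}$. After replacing $\mu$ by $\mu|_E$ (which only makes the bound stronger), and writing $J:=\int_{B(V_0,s)}\|P_V\mu\|_2^2\,d\gamma_{d,n}(V)$, by the definition of $\Gamma_{d,n}$ the theorem reduces to producing a real Borel measure (or, more generally, a distribution) $\nu$ supported in $E$ with
\[
\nu(E)\gtrsim \frac{\mu(E)^2}{J}, \qquad \|K_n*\nu\|_{L^\infty(\R^d)}\lesssim 1.
\]
Just as in the planar argument, $\nu$ will be taken of the form $\nu=M^{-1}\mathbf{1}_F\,\mu$, where $F\subset E$ is a ``good'' subset on which a truncated $n$-Riesz maximal operator of $\mu$ is pointwise bounded by a threshold $M$, to be chosen of order $J/\mu(E)$.

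The key tool I would exploit is the integral-geometric identity rewriting $J$ as a directional $n$-energy of $\mu$,
\[
J=c_{d,n}\iint |x-y|^{-n}\,\phi_{V_0,s}\!\left(\frac{x-y}{|x-y|}\right)d\mu(x)\,d\mu(y),
\]
where $\phi_{V_0,s}$ is a non-negative, centrally symmetric function on $S^{d-1}$, positive precisely on the set of unit vectors contained in some $V\in B(V_0,s)$. Combined with a Tchebyshev-type argument, this identity bounds the $\mu$-mass of the set where an angularly weighted $n$-Riesz maximal operator of $\mu$ exceeds $M$ by $\lesssim J/M^2$, so that $\mu(E\setminus F)\leq \mu(E)/2$ for $M\sim J/\mu(E)$, yielding $\nu(E)\gtrsim \mu(E)^2/J$.

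To upgrade this to the required pointwise bound $\|K_n*\nu\|_\infty\lesssim 1$, I would run a non-homogeneous $T(1)$ / corona stopping-time construction on $\mu|_F$, exactly as in the planar proof of Theorem~\ref{teo1} but with $K_n$ in place of the Cauchy kernel. Standard Calder\'on-Zygmund estimates for $K_n$, combined with the defining condition of $F$, then control the truncated Riesz transforms of $\nu$ pointwise on $\supp\nu$, and a routine tail argument extends the bound globally to $\R^d$.

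The main obstacle I anticipate is that the weight $\phi_{V_0,s}$ is angularly restricted: it vanishes on directions that avoid the cone spanned by the $n$-planes in $B(V_0,s)$. In the planar case, this is handled by a rotation-averaging argument that reconstructs enough of the Cauchy kernel from its $I$-directional components. In higher dimensions, the corresponding step must be replaced by an averaging over the Grassmannian, and one has to verify that the resulting directionally restricted kernel still controls $K_n$ pointwise on $\supp\nu$. This angular analysis is the step I expect to require the most care; once it is in place, the stopping-time construction, the Calder\'on-Zygmund estimates, and the final appeal to the definition of $\Gamma_{d,n}$ should transfer from the proof of Theorem~\ref{teo1} essentially verbatim.
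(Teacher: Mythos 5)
Your high-level reading is right in two places --- the result should be extracted from a measure of the form $c\,\mu|_F$ on a good set $F$, and the Fourier identity relating $\int_{B(V_0,s)}\|P_V\mu\|_2^2\,d\gamma_{d,n}$ to a cone-restricted $n$-energy (Corollary~\ref{corofourier2}) is the essential analytic input. But the actual mechanism you describe differs from the paper's in ways that create genuine gaps, and in one place you have misremembered how the planar proof works.

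First, the paper never attempts a pointwise bound $\|K_n*\nu\|_{L^\infty(\R^d)}\lesssim 1$. That would amount to re-proving a $T(b)$ theorem, which is exactly what the characterization of $\Gamma_{d,n}$ in Theorem~\ref{teokap} (Volberg, Prat) lets one avoid: one only needs to produce a measure $\sigma\in L_n(E)$ with $\sup_{\ve>0}\|\RR^n_\ve\sigma\|_{L^2(\sigma)}^2\leq \sigma(E)$. Your plan of bounding the Riesz potential on a subset $F$ and then ``running a $T(1)$/corona construction to upgrade to $L^\infty$'' is not a fill-in detail; it is the entire content of the nontrivial semiadditivity theorem, and pursuing it directly would make the argument circular or vastly harder.

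Second, the set $F$ in the paper is not defined via a Chebyshev bound on a Riesz maximal operator. Instead one picks a single direction $V'\in B(V_0,s)$ where $\|P_{V'}\mu\|_2^2$ is at most the average over the Grassmannian ball, and then applies Chebyshev to the density of the one projection $P_{V'}\mu$. This selects a subset $F_0\subset P_{V'}(E)$ on which the projected density is $\lesssim \lambda$, and $F:=P_{V'}^{-1}(F_0)\cap\supp\mu$. The key point is that this single-direction density bound is precisely what yields the $n$-growth condition $\sigma(B(x,r))\leq r^n$ for the renormalized measure $\sigma$, because $\sigma(B(x,r))\leq\sigma(P_{V'}^{-1}(P_{V'}(B(x,r))))$. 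A Chebyshev cut-off on an energy or on a maximal Riesz transform does not obviously give $n$-growth at all scales, so your step ``$\nu(E)\gtrsim\mu(E)^2/J$ plus growth'' does not go through as stated.

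Third, and most importantly, the ``angular restriction'' obstacle you flag is real, but it is not resolved by any rotation/Grassmannian-averaging argument --- neither in the plane nor in higher dimensions. The planar proof has no such step. What handles the angular restriction is the corona decomposition of Sections~\ref{section:corona}--\ref{sec8} (Lemma~\ref{lemcorona}), whose packing estimate is precisely what lets one convert the cone-restricted energy $\iint_{x-y\in K}|x-y|^{-n}\,d\mu\,d\mu$ into the $L^2(\mu)$ norm of the full Riesz transform (via Lemma~\ref{lemcorona2} and Theorem~\ref{teocurv}). This is the technical heart of the paper; replacing it with ``averaging over the Grassmannian and verifying the directionally restricted kernel still controls $K_n$'' does not describe an argument that would actually close the gap, since one manifestly cannot bound $|K_n(x-y)|$ pointwise by a kernel supported on a proper cone.

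So: the skeleton (reduce to a growth measure on a subset, use the Fourier identity) is right, but the three key steps --- use of Theorem~\ref{teokap} rather than a direct $L^\infty$ bound, the single-direction Chebyshev construction that yields $n$-growth, and the corona decomposition that converts conical energy into an $L^2$ Riesz transform bound --- are either missing or replaced by steps that would not work.
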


In this theorem, $P_V$ is the orthogonal projection onto the $n$-dimensional subspace $V$, and $\| P_V\mu \|_2$ is the $L^2$ norm
(with respect to $n$-dimensional Lebesgue measure) in $V$ of $P_V\mu$ (we identify $P_V\mu$ with its density  with respect 
to $n$-dimensional Lebesgue measure in $V$). Also, $\gamma_{d,n}$ is the natural probability measure on
$G(d,n)$ (see \cite[Chapter 3]{Mattila-gmt}), and $B(V_0,s)$ is a ball of radius $s$ in $G(d,n)$, where $G(d, n)$ denotes the Grassmanian space of $n$-dimensional linear subspaces of $\R^d$. See 
Section \ref{sechighdim} below for the definition of the metric in $G(d,n)$.
\vv

The first fundamental step towards the proof of Theorems \ref{teo1} and \ref{teo2} is a Fourier calculation which shows that
there exist constants $c,\lambda>1$ (depending only on $s, n, d$) such that
\begin{equation}\label{eqclau*1}
 \iint_{x-y \in K(V_0^\bot,\lambda^{-1} s)}\frac{ d\mu (x)\,  d\mu (y)}{|x-y|^n} 
\leq c
\int_{B(V_0, s)} \| P_V\mu \|_2^2 \, d \gamma_{d,n}(V) %
,
\end{equation}
where, given $U\in G(d,m)$ and $t>0$, $K(U,t)$ is the cone
$$K(U,t) = \bigl\{x\in\R^d:\dist(x,U)<t\,|x|\bigr\},$$
and $V_0^\bot$ is the subspace orthogonal to $V_0$.
In the planar case $n=1$, $d=2$, the calculation is particularly clean and we obtain a more precise result.
See Corollaries \ref{corofourier} and \ref{corofourier2} for more details. Our inspiration for proving
the estimate \rf{eqclau*1} 
comes from the work of Martikainen and Orponen \cite{MO}. In that paper, the authors characterize the ``big pieces of
Lipschitz graph'' condition on $n$-AD-regular sets in terms of an integral of the form $\int_{B(V_0, s)} \| P_V\mu \|_2^2 \, d \gamma_{d,n}(V)$, with $\mu$ equal to $n$-dimensional Hausdorff measure $\HH^n$ restricted to a suitable subset $E$.
Roughly speaking, in the proof of the main lemma (Lemma 1.10) of \cite{MO}, the authors obtain a variant of the estimate \eqref{eqclau*1}, but with the left-hand side replaced by a discretized version of the integral.
They do not use the Fourier transform, but instead use more
geometric arguments.

The second step in the proof of Theorems \ref{teo1} and \ref{teo2} is the construction of a corona type decomposition which will allow
us to bound the $L^2(\mu)$ norm of the Riesz transform
$$\RR^n\mu(x) = \int \frac{x-y}{|x-y|^{n+1}}\,d\mu(y),$$
assuming that $\mu$ satisfies the growth condition
$$\mu(B(x,r))\leq c_0\,r^n\quad\mbox{ for all $x\in\R^d$, $r>0$.}
$$
Using this corona decomposition we will deduce that
\begin{equation}\label{eqclau*2}
\|\RR^n\mu\|_{L^2(\mu)}^2 \lesssim \mu(\R^d) + \iint_{x-y \in K(U_0,s)}\!\!\frac{ d\mu (x)\,  d\mu (y)}{|x-y|^n}
\end{equation}
for any $U_0\in G(d,d-n)$ and $s>0$. 
In the case $n=1$, we will get the following estimate involving the curvature of $\mu$:
\begin{equation}\label{eqclau*3}
\iiint \frac1{R(x,y,z)^2}\,d\mu(x)\,d\mu(y)\,d\mu(z) \lesssim \mu(\C) + \iint_{x-y \in K(U_0,s)}\!\!\frac{ d\mu (x)\,  d\mu (y)}{|x-y|},
\end{equation}
where $R(x,y,z)$ stands for the radius of the circumference passing through $x,y,z$. In both \rf{eqclau*2} and \rf{eqclau*3},
the implicit constant depends only on $s$, $c_0$, $n$ and $d$.
See Theorem \ref{teocurv} for more details. We remark that other related corona decompositions have already appeared in 
\cite{Tolsa-bilip} and \cite{Azzam-Tolsa-GAFA}, for example. However, the use of the conical Riesz-type energy in \rf{eqclau*1} in connection with corona decompositions is totally
new, as far as we know.

Using \rf{eqclau*1}, \rf{eqclau*2}, \rf{eqclau*3}, and the characterization of analytic capacity in terns of curvature of measures from \cite{Tolsa-sem} and the characterization of
the capacities $\Gamma_{d,n}$ in terms of $L^2$ estimates of Riesz transforms from \cite{Volberg} and \cite{Prat},
we will obtain Theorem \ref{teo1} and  Theorem \ref{teo2}
respectively.

\vv

\section{Notation and preliminaries} \label{secprelim}

\subsection{Generalities}
We write $a\lesssim b$ if there is a $C>0$ such that $a\leq Cb$, and we write $a\lesssim_{t} b$ if the constant $C$ depends on the parameter $t$. We write $a\approx b$ to mean $a\lesssim b\lesssim a$ and define $a\approx_{t}b$ similarly. 

We denote the open ball of radius $r$ centered at $x$ by $B(x,r)$. For a ball $B=B(x,r)$ and $\delta>0$ we write $r(B)$ for its radius and denote $\delta B=B(x,\delta r)$. 

Given an $m$-plane $V\subset \R^d$, $z\in\R^d$, and $s>0$, we consider the (open) cone
$$K(z,V,s) = \bigl\{x\in\R^d:\dist(x-z,V)<s\,|x-z|\bigr\}.$$
In the case $z=0$, we also write $K(V,s) = K(0,V,s)$.

\subsection{Measures and rectifiability}

The Lebesgue measure of a set $A\subset \R^{d}$ is denoted by $\LL^d(A)$. Given $0<\delta\leq\infty$, we set
\[\HH^{n}_{\delta}(A)=\inf\left\{\textstyle{ \sum_i \diam(A_i)^n: A_i\subset\R^{d},\,\diam(A_i)\leq\delta,\,A\subset \bigcup_i A_i}\right\}.\]
We define the {\it $n$-dimensional Hausdorff measure} as
\[\HH^{n}(A)=\lim_{\delta\to 0}\HH^{n}_{\delta}(A).\]

A set $E\subset \R^d$ is called {\em $n$-rectifiable} if there are Lipschitz maps
$f_i:\R^n\to\R^d$, $i=1,2,\ldots$, such that 
\begin{equation}\label{eq001}
\HH^n\biggl(E\setminus\bigcup_i f_i(\R^n)\biggr) = 0.
\end{equation}
On the other hand, $E$ is called {\em purely $n$-unrectifiable} if any $n$-rectifiable subset $F\subset E$ has zero $\HH^n$-measure.

Also, one says that 
a Radon measure $\mu$ on $\R^d$ is {\em $n$-rectifiable} if $\mu$ vanishes out of an $n$-rectifiable
set $E\subset\R^d$ and moreover $\mu$ is absolutely continuous with respect to $\HH^n|_E$.

A measure $\mu$ is called {\em $n$-AD-regular} (or just {\em AD-regular} or {\em Ahlfors-David regular}) if there exists some
constant $c>0$ such that
$$c^{-1}r^n\leq \mu(B(x,r))\leq c\,r^n\quad \mbox{ for all $x\in
\supp(\mu)$ and $0<r\leq \diam(\supp(\mu))$.}$$

\subsection{Cauchy and Riesz transform, and capacities}

Given a signed Radon measure $\nu$ in $\C$, the Cauchy transform is defined by
$$\CC\nu(x) = \int \frac1{z-w}\,d\nu(w),$$
whenever the integral makes sense. The $\ve$-truncated version is  
$$\CC_\ve \nu(x) = \int_{|z-w|>\ve} \frac1{z-w}\,d\nu(w).$$
For a signed Radon measure $\nu$ in $\R^d$, we consider the $n$-dimensional Riesz
transform
$$\RR^n\nu(x) = \int \frac{x-y}{|x-y|^{n+1}}\,d\nu(y),$$
whenever the integral makes sense. For $\ve>0$, its $\ve$-truncated version is given by 
$$\RR_\ve^n \nu(x) = \int_{|x-y|>\ve} \frac{x-y}{|x-y|^{n+1}}\,d\nu(y).$$

The curvature of a non-negative Borel measure $\mu$ in $\C$ is defined by
$$c^2(\mu) = \iiint \frac1{R(x,y,z)^2}\,d\mu(x)\,d\mu(y)\,d\mu(z),$$
where $R(x,y,z)$ stands for the radius of the circumference passing through $x,y,z$.
For $\ve>0$, its $\ve$-truncated version is
$$c^2_\ve(\mu) = \iiint_{\begin{subarray}{l}|x-y|>\ve\\|x-z|>\ve\\|y-z|>\ve\end{subarray}}
 \frac1{R(x,y,z)^2}\,d\mu(x)\,d\mu(y)\,d\mu(z).$$
As shown in \cite{MV}, if $\mu$ is a finite Borel measure in $\C$ satisfying the linear growth condition
$$\mu(B(z,r))\leq c_0\,r\quad \mbox{ for all $z\in\C$, $r>0$,}$$
then
$$\|\CC_\ve\mu\|_{L^2(\mu)}^2 = \frac16\,c^2_\ve(\mu) + O(\mu(\C)),$$
where $|O(\mu(\C))|\lesssim \mu(\C)$, with the implicit constant depending only on $c_0$.
The connection between the Cauchy kernel and curvature of measures was first observed by Melnikov while studying analytic
capacity \cite{Melnikov}.

We denote by $L_n(E)$ the set of positive Borel measures $\mu$ supported on $E$ satisfying $$\mu(B(x,r))\leq r^n
\quad \mbox{ for all $x\in E$, $r>0$.}$$ 
The following theorem characterizes analytic capacity in terms of measures from $L_1(E)$ with finite curvature.

\begin{theorem}\label{teogam}
Let $E\subset\C$ be compact. Then we have:
\begin{equation}\label{eqgam1*}
\gamma(E)\approx \sup\bigl\{\mu(E):\mu\in L_1(E),\,c^2(\mu)\leq \mu(E)\bigr\}.
\end{equation}
\end{theorem}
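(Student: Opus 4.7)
My plan is to prove the two directions of the equivalence separately; both rely on substantial non-homogeneous Cauchy-transform machinery.

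For the direction $\gamma(E)\gtrsim\sup\{\mu(E):\mu\in L_1(E),\,c^2(\mu)\leq\mu(E)\}$: given such a $\mu$, Melnikov's identity recalled just before the theorem immediately yields $\|\CC_\ve\mu\|_{L^2(\mu)}^2\leq \tfrac16\,c_\ve^2(\mu)+O(\mu(E))\lesssim\mu(E)$ uniformly in $\ve>0$. This is merely a ``testing condition'' of $\CC$ against the constant function $1$, so to upgrade it to genuine $L^2(\mu|_F)$-boundedness of $\CC$ on a large subset $F\subset E$ with $\mu(F)\gtrsim\mu(E)$, I would invoke the nonhomogeneous $T(1)$ theorem of Nazarov--Treil--Volberg, preceded by a good-$\lambda$ stopping-time step that removes the points where the maximal truncated Cauchy transform is large. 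Once this $L^2(\mu|_F)$-boundedness is in hand, non-homogeneous Calder\'on--Zygmund theory produces the weak-$(1,1)$ bound for $\CC$, and a standard stopping construction yields a function $b$ supported on $F$ with $|b|\leq 1$, $\int b\,d\mu\gtrsim\mu(F)$, and $\|\CC(b\mu)\|_\infty\lesssim 1$. Then $\gamma(E)\geq\gamma_+(F)\gtrsim\mu(F)\gtrsim\mu(E)$.

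For the direction $\gamma(E)\lesssim\sup\{\mu(E):\mu\in L_1(E),\,c^2(\mu)\leq\mu(E)\}$: I would rely on the (deep) comparability $\gamma(E)\approx\gamma_+(E)$, where $\gamma_+$ restricts \rf{defgam} to Cauchy transforms of positive Radon measures supported on $E$. Choose a positive $\nu$ on $E$ with $\|\CC\nu\|_\infty\leq 1$ and $\nu(E)\approx\gamma_+(E)\approx\gamma(E)$. The $L^\infty$ bound on $\CC\nu$ together with the maximum principle applied to $\CC\nu$ off $E$ forces the linear growth $\nu(B(z,r))\lesssim r$, so up to a fixed constant $\nu\in L_1(E)$. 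A Cotlar-type inequality then converts the pointwise bound on $\CC\nu$ into the uniform estimate $\|\CC_\ve\nu\|_{L^2(\nu)}^2\lesssim\nu(E)$, and Melnikov's identity read in reverse delivers $c^2(\nu)\lesssim\nu(E)$.

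The main obstacle is that neither direction is elementary: the lower bound rests on the nonhomogeneous $T(1)$ theorem, and the upper bound depends on the comparability $\gamma\approx\gamma_+$, itself a result of comparable depth to Theorem \ref{teogam}. In fact this characterization is essentially the main theorem of \cite{Tolsa-sem}, whose proof intertwines these ingredients via a delicate corona-type decomposition for measures of linear growth and finite curvature --- the very kind of decomposition which the present paper will deploy to establish \rf{eqclau*3}. Thus a truly self-contained argument would largely reproduce \cite{Tolsa-sem}, which is why the authors here simply cite the theorem.
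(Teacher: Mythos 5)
The paper does not prove Theorem~\ref{teogam}; it records it as a known result, attributing the lower bound $\gamma(E)\gtrsim\sup\{\ldots\}$ to Melnikov \cite{Melnikov} and the harder converse to Tolsa \cite{Tolsa-sem}. Your sketch is a correct high-level account of how those two cited results are established, and you correctly diagnose that any attempt at a self-contained argument would essentially reproduce \cite{Tolsa-sem}. So in effect you have reconstructed the citation chain, which is all the paper itself offers.

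Two small points on the details. For the lower bound, your route (good-$\lambda$ stopping + nonhomogeneous $T(1)$ + weak-$(1,1)$ + construction of a bounded $b$) is a valid modern proof but is heavier machinery than Melnikov's original argument, which predates the NTV $T(1)$ theorem for non-doubling measures: Melnikov passes from the uniform estimate $\|\CC_\ve\mu\|_{L^2(\mu)}^2\lesssim\mu(E)$, via a Chebyshev selection of a large subset $F$ on which the truncated potentials are uniformly bounded, to a positive measure competitor for $\gamma_+$, without invoking full $L^2$-boundedness of $\CC$ on $\mu|_F$. Both routes give $\gamma(E)\geq\gamma_+(F)\gtrsim\mu(E)$. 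For the converse, your scheme is correct as written (linear growth of an admissible $\nu$ for $\gamma_+$ follows from the bounded potential; the comparison between $\CC_\ve\nu$ and $\CC\nu$ under linear growth yields $\sup_\ve\|\CC_\ve\nu\|_\infty\lesssim1$; Melnikov--Verdera then gives $c^2(\nu)\lesssim\nu(E)$), but, as you note, it is circular: the only known proof of $\gamma\approx\gamma_+$ \emph{is} the semiadditivity theorem of \cite{Tolsa-sem}, which in turn is proved by establishing essentially \eqref{eqgam1*}. This is precisely why the paper cites rather than proves the statement, and your closing remark reflects that accurately.
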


The fact that $\gamma(E)$ is bigger than a constant multiple of the supremum is due to Melnikov \cite{Melnikov}, and the
more difficult converse estimate to Tolsa \cite{Tolsa-sem}.

The extension of the preceding result to the capacities $\Gamma_{d,n}$ is the following:

\begin{theorem}\label{teokap}
Let $E\subset\R^d$ be compact. Then we have:
\begin{equation}\label{eqkap1*}
\Gamma_{d,n}(E)\approx \sup\Bigl\{\mu(E):\mu\in L_n(E),\,\sup_{\ve>0}\|\RR^n_\ve\mu\|_{L^2(\mu)}^2\leq \mu(E) \Bigr\}.
\end{equation}
\end{theorem}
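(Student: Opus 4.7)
The plan is to establish the two inequalities hidden in the symbol $\approx$ separately.

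For the lower bound $\Gamma_{d,n}(E)\gtrsim \sup\{\mu(E):\ldots\}$, I would fix $\mu\in L_n(E)$ with $\sup_\ve\|\RR^n_\ve\mu\|_{L^2(\mu)}^2\leq \mu(E)$ and produce an admissible distribution for $\Gamma_{d,n}$. The hypotheses of $n$-growth and uniform control of $\RR^n_\ve\mathbf{1}$ in $L^2(\mu)$ place us in the setting of non-homogeneous Calder\'on-Zygmund theory of Nazarov-Treil-Volberg, which upgrades these data to full $L^2(\mu)$ boundedness of $\RR^n_\ve$ and hence to a weak-$(1,1)$ inequality and a Cotlar-type pointwise estimate. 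Choosing the truncation level equal to a suitable absolute constant, I would extract a subset $F\subset E$ with $\mu(F)\geq \mu(E)/2$ on which $\RR^n_\ve\mu$ is pointwise bounded, and then use Cotlar's inequality together with the $n$-growth of $\mu$ to deduce $\|\RR^n_\ve(\chi_F\mu)\|_{L^\infty(\R^d)}\leq C$ uniformly in $\ve$. After normalization, $T=C^{-1}\chi_F\mu$ is admissible in the definition of $\Gamma_{d,n}(E)$, yielding $\Gamma_{d,n}(E)\geq\langle T,\mathbf{1}\rangle\gtrsim\mu(E)$.

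For the upper bound $\Gamma_{d,n}(E)\lesssim\sup\{\mu(E):\ldots\}$, which is the deep direction due to \cite{Volberg} and \cite{Prat}, I would start from a distribution $T$ supported in $E$ with $\|x/|x|^{n+1}*T\|_\infty\leq 1$ and $\langle T,\mathbf{1}\rangle\geq \Gamma_{d,n}(E)/2$, and construct a positive measure $\mu\in L_n(E)$ with $\mu(E)\gtrsim \langle T,\mathbf{1}\rangle$ and uniform $L^2(\mu)$ control of $\RR^n_\ve$. I would regularize $T$ by convolution with a smooth bump to obtain a smooth density whose Riesz potential is still bounded, extract from its positive part a measure $\mu$ of $n$-growth concentrated near $E$ with total mass comparable to $\langle T,\mathbf{1}\rangle$ (the $n$-growth being forced by the $L^\infty$ potential bound through a standard dual extremal argument), and then deduce the $L^2(\mu)$ Riesz bound via the $T(b)$ theorem for non-doubling measures of Nazarov-Treil-Volberg. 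The accretive function $b$ is built so that $\RR^n(b\mu)$ inherits the $L^\infty$ control from the original potential of $T$, and its local non-degeneracy comes from the fact that the regularization of $T$ is essentially positive on the bulk of the support of $\mu$.

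The main obstacle is the construction of the accretive system $b$ and the verification of the hypotheses of the non-doubling $T(b)$ theorem at every scale, which is precisely where the substantial machinery of \cite{Volberg} and \cite{Prat} intervenes. By contrast, the first direction is a relatively soft corollary of modern non-doubling Calder\'on-Zygmund theory, and the delicate part there is merely tracking the constants through the Cotlar estimate to ensure that the normalizing factor does not ruin the $\mu(E)$ on the right-hand side.
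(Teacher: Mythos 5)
The paper does not actually prove this theorem: immediately after the statement it simply records that the result was proved by Volberg for $n=d-1$ and extended by Prat to all $0<n<d$, and it is used as a black box in the rest of the argument. So there is no in-paper proof to compare against, and what you have written is a sketch of the cited external arguments. At the level of overall architecture you are right: the direction $\Gamma_{d,n}(E)\gtrsim\sup\{\cdots\}$ reduces to comparing the right-hand side with the positive Riesz capacity (the supremum of $\mu(E)$ over $\mu\in L_n(E)$ with $\|\RR^n\mu\|_{L^\infty(\R^d)}\leq 1$), while the converse $\Gamma_{d,n}(E)\lesssim\sup\{\cdots\}$ is the deep comparability result whose engine is the non-doubling $T(b)$ theorem of Nazarov--Treil--Volberg.

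Two of your intermediate steps, however, would not hold up as stated. In the lower bound, going from the single global testing estimate $\sup_\ve\|\RR^n_\ve\mu\|_{L^2(\mu)}^2\leq\mu(E)$ to a set $F$ with $\|\RR^n_\ve(\chi_F\mu)\|_{L^\infty(\R^d)}\lesssim 1$ uniformly in $\ve$ is not ``Cotlar plus Chebyshev''. The hypothesis is a testing bound on the one function $\bfu$, not a local $T(1)$ condition, and $\RR^n_\ve$ is a priori not bounded on $L^2(\mu)$, so Cotlar's inequality has no operator to act on. The actual mechanism is the Nazarov--Treil--Volberg suppressed-kernel construction: one builds a nonnegative Lipschitz function $\Phi$ and a modified kernel $K_\Phi$ that is Calder\'on--Zygmund, whose associated operator is $L^2(\mu)$-bounded (here the global testing enters through a good-$\lambda$/Chebyshev selection, not a $T(1)$ theorem), and which coincides with the Riesz kernel on $\{\Phi=0\}$; choosing $F=E\cap\{\Phi=0\}$ with $\mu(F)\gtrsim\mu(E)$ then delivers the truncation-uniform $L^\infty$ bound on all of $\R^d$. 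Cotlar's inequality appears inside this construction but is not what drives it.

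In the upper bound, regularizing $T$ and extracting the positive part of its density does not produce an admissible $\mu$. If $f=T*\phi_\ve$, the potential $\RR^n(f\,d\LL^d)$ is still bounded, but that is a property of $f$, not of $f^+$; there is no reason for $f^+\,d\LL^d$ to have either $n$-growth or a bounded Riesz potential, since the cancellation you discard by taking positive parts is exactly what made the bound hold. In the proofs of Volberg and Prat the measure $\mu$ and the accretive system $b$ are built via a Vitushkin-type localization of $T$ and an intricate induction-on-scales scheme; $\mu$ is never obtained by taking positive parts of a regularization. You correctly locate the $T(b)$ theorem as the key tool, but the bridge you propose to reach its hypotheses is not the one used and, as written, would fail.
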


Theorem \ref{teokap} was proved by Volberg in the case $n=d-1$, and it was later extended to all the values $0<n<d$ by Prat \cite{Prat}.

\vv

\section{The Fourier calculation}

\subsection{The planar case}

We think it is worth first studying the planar case because it is simpler the higher dimensional case, and the
result is more precise. 

Recall that for any Schwartz function $\phi : \R^2 \to \C$, we have
\begin{align}
\label{2d-fourier-transform-identity}
\wh{P_\theta\phi}(x) = \wh\phi(x) \quad\text{ for all $x \in L_\theta$},
\end{align}
where $(P_\theta\phi)(x) = \int_{x + L_\theta^\perp} \phi \, d\LL^1 = \int_{L_\theta^\perp} \phi(x + y) \, d\LL^1y$ and $\wh{P_\theta\phi}$ denotes the 1-dimensional Fourier transform on $L_\theta$. (Note that $P_\theta\phi$ is the density of $P_\theta\nu$ where $\nu = \phi(x) \, dx$.) To see \eqref{2d-fourier-transform-identity}, observe that for $\xi \in L_\theta$,
\[
\wh{P_\theta\phi}(\xi)
=
\int_{L_\theta}
P_\theta\phi(x)
e^{-2\pi i x \cdot \xi} \, dx
=
\int_{L_\theta}
\int_{L_\theta^\perp}
\phi(x+y)
e^{-2\pi i x \cdot \xi} \, dx
\, dy
=
\hat\phi(\xi),
\]
where we use the fact that $y \cdot \xi = 0$ for $y \in L_\theta^\perp$.

\begin{lemma}
\label{lem-2d-fourier-transform-of-kernel}
Let $K_I$ be the cone $K_I = \bigl\{re^{i\theta}:r\in\R,\,\theta\in I\bigr\}$. Let $I^\bot= I+\frac\pi2$ (mod $\pi$) and define the cone $K_{I^\bot}$ similarly. Then the (distributional) Fourier transform of $\chi_{K_I}(x)\, |x|^{-1}$ is $\chi_{K_{I^\bot}}(x)\, |x|^{-1}$.
\end{lemma}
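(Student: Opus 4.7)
The plan is to verify the identity distributionally: pair both sides with an arbitrary Schwartz function $\phi$ and reduce everything to the slicing identity \rf{2d-fourier-transform-identity}. First I would observe that $\chi_{K_I}(x)/|x|$ is locally integrable on $\R^2$ and decays like $|x|^{-1}$ at infinity, so it is a tempered distribution. Using the signed polar parametrization $x=re^{i\theta}$ with $r\in\R$ and $\theta\in[0,\pi)$, which covers $\R^2\setminus\{0\}$ bijectively with Jacobian $|r|\,dr\,d\theta$ and satisfies $|x|=|r|$, the weight and the Jacobian cancel, giving
\[
\int_{K_I}\frac{\hat\phi(x)}{|x|}\,dx = \int_I\int_{L_\theta}\hat\phi(\xi)\,d\xi\,d\theta,
\]
where $L_\theta=\{re^{i\theta}:r\in\R\}$ is the full line.

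Next I would apply \rf{2d-fourier-transform-identity} on each slice: since $\hat\phi|_{L_\theta}$ coincides with the one-dimensional Fourier transform of $P_\theta\phi$, the one-dimensional Fourier inversion formula evaluated at the origin yields
\[
\int_{L_\theta}\hat\phi(\xi)\,d\xi = \int_{L_\theta}\wh{P_\theta\phi}(\xi)\,d\xi = P_\theta\phi(0) = \int_{L_\theta^\bot}\phi(y)\,d\LL^1 y = \int_{L_{\theta+\pi/2}}\phi\,d\LL^1,
\]
where the last equality just uses $L_\theta^\bot=L_{\theta+\pi/2}$.

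Substituting back, changing variable $\theta'=\theta+\pi/2\pmod\pi$ so that $\theta'$ ranges over $I^\bot$ exactly once as $\theta$ ranges over $I$, and then running the polar computation of the first paragraph in reverse, I obtain
\[
\int_{K_I}\frac{\hat\phi(x)}{|x|}\,dx = \int_{I^\bot}\int_{L_{\theta'}}\phi\,d\LL^1\,d\theta' = \int_{K_{I^\bot}}\frac{\phi(\xi)}{|\xi|}\,d\xi,
\]
which is exactly the claim. There is no real obstacle here; the argument is essentially bookkeeping. The only mild subtleties are using the signed polar parametrization (so that $L_\theta$ is a full line rather than a half-line, and $K_I$ is centrally symmetric as it should be) and interpreting $I^\bot$ modulo $\pi$ so that $\theta\mapsto\theta+\pi/2$ is a genuine bijection from $I$ onto $I^\bot$.
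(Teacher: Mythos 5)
Your proposal is correct and follows essentially the same route as the paper: pair with a Schwartz function $\hat\phi$, pass to polar coordinates so the $|x|^{-1}$ weight cancels the Jacobian, invoke the projection--slice identity \eqref{2d-fourier-transform-identity} together with one-dimensional Fourier inversion to trade $\int_{L_\theta}\hat\phi$ for $\int_{L_\theta^\perp}\phi$, and reassemble. The only difference is cosmetic: you spell out the signed polar parametrization and the change of variable $\theta\mapsto\theta+\pi/2$ more explicitly than the paper does.
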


\begin{proof}
Let
$\phi: \R^2 \to \C$ be a Schwartz function. By applying the identity \eqref{2d-fourier-transform-identity} to $\phi$ and using the Fourier inversion formula, we have $\int_{L_\theta} \wh \phi \, d\LL^1 =  (P_\theta\phi)(0) = \int_{L_\theta^\perp} \phi \, d\LL^1$. Thus, by polar coordinates,
\begin{align*}
\int_{K_I}
|x|^{-1} \,\wh \phi(x)
\, dx
=
 \int_I \int_{L_\theta} \wh \phi \, d\LL^1 \, d\theta
=
 \int_I \int_{L_\theta^\perp} \phi \, d\LL^1 \, d\theta
=
\int_{K_{I^\perp}}
|x|^{-1}\, \phi(x)
\, dx,
\end{align*}
which completes the proof.
\end{proof}

\begin{propo}\label{lemfourier}
Let $\psi : \R^2 \to \R$ be a Schwartz function. Then, for any set $I\subset [0,\pi]$, we have
$$\int_{I^\bot} \|P_\theta\psi\|_2^2\,d\theta = \iint_{x-y\in K_I} \frac{\psi(x)\, \psi(y)}{|x-y|}\,dx\,dy.$$
\end{propo}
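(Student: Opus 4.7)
The plan is a straightforward three-step Fourier argument that combines Plancherel on each line $L_\theta$ with the distributional Fourier transform of the conical kernel from Lemma \ref{lem-2d-fourier-transform-of-kernel}.

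First, for each $\theta$, I would apply the $1$-dimensional Plancherel theorem on $L_\theta$ together with \eqref{2d-fourier-transform-identity} to write
$$
\|P_\theta\psi\|_2^2 = \int_{L_\theta} |\widehat{P_\theta\psi}|^2\,d\LL^1 = \int_{L_\theta} |\wh\psi|^2\,d\LL^1.
$$
Then I would integrate over $\theta \in I^\bot$ and convert to polar coordinates on $\R^2$, using the parametrization $x = re^{i\theta}$ with $r\in\R$ and $\theta\in[0,\pi)$, whose Jacobian is $|r|$. Because $|x| = |r|$, the factors cancel, giving
$$
\int_{I^\bot} \|P_\theta\psi\|_2^2\,d\theta = \int_{I^\bot}\!\int_\R |\wh\psi(re^{i\theta})|^2\,dr\,d\theta = \int_{K_{I^\bot}} \frac{|\wh\psi(x)|^2}{|x|}\,dx.
$$

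Second, since $\psi$ is real, I would use the standard identity $|\wh\psi|^2 = \wh{\psi * \wt\psi}$, where $\wt\psi(x) = \psi(-x)$. Combined with Parseval (in the distributional sense, applied to the tempered distribution $\chi_{K_{I^\bot}}(x)/|x|$) and Lemma \ref{lem-2d-fourier-transform-of-kernel}, this yields
$$
\int_{K_{I^\bot}} \frac{|\wh\psi(x)|^2}{|x|}\,dx = \int \frac{\chi_{K_{I^\bot}}(\xi)}{|\xi|}\,\wh{\psi*\wt\psi}(\xi)\,d\xi = \int \frac{\chi_{K_I}(x)}{|x|}\,(\psi*\wt\psi)(x)\,dx,
$$
since the Fourier transform of $\chi_{K_{I^\bot}}/|x|$ is $\chi_{K_I}/|x|$ by the Lemma (using $(I^\bot)^\bot = I$).

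Finally, I would unfold the convolution and substitute $z = y - x$:
$$
\int \frac{\chi_{K_I}(x)}{|x|}(\psi * \wt\psi)(x)\,dx = \iint \frac{\chi_{K_I}(x)}{|x|}\psi(y)\psi(y - x)\,dy\,dx = \iint_{y - z \in K_I} \frac{\psi(y)\psi(z)}{|y - z|}\,dy\,dz,
$$
which is the desired identity. I do not expect any serious obstacle: the only point requiring some care is bookkeeping the polar Jacobian $|r|$ so that it cancels $|x|^{-1}$ against the kernel, and keeping track of the symmetry $(I^\bot)^\bot = I$ when Lemma \ref{lem-2d-fourier-transform-of-kernel} is invoked. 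The Parseval identity between $\chi_{K_{I^\bot}}/|x|$ and a Schwartz function $\psi*\wt\psi$ is justified by interpreting the former as a tempered distribution whose Fourier transform is given by the Lemma.
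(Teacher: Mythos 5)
Your proof is correct and is essentially the paper's own argument: the paper likewise establishes $\int_{I^\perp}\|P_\theta\psi\|_2^2\,d\theta = \int_{K_{I^\perp}}|x|^{-1}|\wh\psi(x)|^2\,dx$ via Plancherel, \eqref{2d-fourier-transform-identity}, and polar coordinates, and separately rewrites $\iint_{x-y\in K_I}\frac{\psi(x)\psi(y)}{|x-y|}\,dx\,dy = \int\wh k\,|\wh\psi|^2\,dx$ with $k=\chi_{K_I}/|\cdot|$ and $\wh k=\chi_{K_{I^\perp}}/|\cdot|$, using the fact that $\psi$ is real. Your intermediate use of $|\wh\psi|^2 = \wh{\psi*\wt\psi}$ is just a slightly more explicit rendering of the same Parseval step; the only superficial difference is the direction in which the two ends of the chain are presented.
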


\begin{proof}
Let $k(x) = \chi_{K_I}(x)\, |x|^{-1}$, so that $\wh k(x) = \chi_{K_{I^\perp}}(x) |x|^{-1}$ and $\iint_{x-y\in K_I} \frac{\psi(x)\, \psi(y)}{|x-y|}\,dx\,dy = \int (k * \psi) \, \psi \, dx$.  Since $\psi$ is a real-valued Schwartz function, we have
$\int (k * \psi) \, \psi \, dx
=
\int \wh k \,|\wh\psi|^2\,dx$,
so it follows that
\[
\iint_{x-y\in K_I} \frac{\psi(x)\, \psi(y)}{|x-y|}\,dx\,dy
=
\int (k * \psi) \, \psi \, dx
=
\int \wh k \,|\wh\psi|^2\,dx
=
\int_{K_{I^\perp}} |x|^{-1} \,|\wh\psi(x)|^2 \, dx
.
\]

Finally, by Plancherel and the identity \eqref{2d-fourier-transform-identity} applied to $\psi$, we have
\begin{equation}\label{eq843}
\int_{I^\perp} \|P_\theta\psi\|_2^2\,d\theta
=
\int_{I^\perp} \int_{L_\theta} |\widehat{P_\theta\psi}|^2 \, d\LL^1 \,d\theta
=
\int_{I^\perp} \int_{-\infty}^\infty |\wh\psi(r\,e^{i\theta})|^2 \, dr \,d\theta
=
 \int_{K_{I^\perp}} |x|^{-1} |\wh\psi(x)|^2 \, dx,
\end{equation}
which completes the proof.
\end{proof}

\vv

\begin{coro}\label{corofourier}
Let $\mu$ be a finite Borel measure in $\C$ with compact support and $I\subset [0,\pi]$ an arbitrary open set. Then we have
$$\iint_{x-y\in K_I\setminus\{0\}} \frac1{|x-y|}\,d\mu(x)\,d\mu(y)\leq \int_{I^\bot} \|P_\theta\mu\|_2^2\,d\theta,$$
where $K_I$ is the cone
$K_I = \bigl\{re^{i\theta}:r\in\R,\,\theta\in I\bigr\}$
and $I^\bot= I+\frac\pi2$ (mod $\pi$).
\end{coro}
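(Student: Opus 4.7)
The strategy is to apply Proposition \ref{lemfourier} to smooth approximations of $\mu$ and then pass to the limit. Fix a non-negative radial bump function $\varphi\in C_c^\infty(\R^2)$ with $\int\varphi=1$, set $\varphi_\ve(x)=\ve^{-2}\varphi(x/\ve)$, and let $\psi_\ve=\mu*\varphi_\ve$. Since $\supp\mu$ is compact, each $\psi_\ve$ is $C_c^\infty$, and hence a real Schwartz function, so Proposition \ref{lemfourier} yields
$$\int_{I^\bot}\|P_\theta\psi_\ve\|_2^2\,d\theta = \iint_{x-y\in K_I}\frac{\psi_\ve(x)\psi_\ve(y)}{|x-y|}\,dx\,dy.$$
We may assume the right-hand side of the corollary is finite, as otherwise the inequality is vacuous.

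For the $L^2$-projection side of the identity above, a Fubini computation (or equivalently a Fourier argument using \eqref{2d-fourier-transform-identity}) shows that $P_\theta\psi_\ve=(P_\theta\mu)*(P_\theta\varphi_\ve)$ as functions on $L_\theta$. Since $\|P_\theta\varphi_\ve\|_{L^1}=\|\varphi_\ve\|_{L^1}=1$, Young's inequality gives $\|P_\theta\psi_\ve\|_2\leq\|P_\theta\mu\|_2$ for every $\theta$, and integrating over $I^\bot$ produces the uniform bound
$$\int_{I^\bot}\|P_\theta\psi_\ve\|_2^2\,d\theta\leq \int_{I^\bot}\|P_\theta\mu\|_2^2\,d\theta.$$

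The main obstacle is passing to the limit on the cone-integral side, since the kernel $\chi_{K_I}(x-y)/|x-y|$ is singular on the diagonal $x=y$ and weak convergence of $\psi_\ve\,dx$ to $d\mu$ does not apply to singular test functions. The openness of $I$ (and hence of $K_I\setminus\{0\}$) is precisely what rescues us: fix an increasing sequence $(\eta_j)_{j\geq 1}$ of non-negative continuous functions with compact support contained in $K_I\setminus\{0\}$ and $\eta_j(z)\nearrow\chi_{K_I\setminus\{0\}}(z)/|z|$ pointwise. For each fixed $j$, $(x,y)\mapsto\eta_j(x-y)$ is continuous and bounded, and the supports of $\psi_\ve$ lie in a common compact set as $\ve\to0$, so weak convergence $\psi_\ve\,dx\otimes\psi_\ve\,dx\rightharpoonup d\mu\otimes d\mu$ gives
$$\iint \eta_j(x-y)\,d\mu(x)\,d\mu(y) = \lim_{\ve\to0}\iint \eta_j(x-y)\,\psi_\ve(x)\psi_\ve(y)\,dx\,dy \leq \int_{I^\bot}\|P_\theta\mu\|_2^2\,d\theta,$$
where the final inequality combines $\eta_j(z)\leq \chi_{K_I}(z)/|z|$ with the Schwartz identity and the uniform bound of the previous paragraph. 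Letting $j\to\infty$ and invoking monotone convergence concludes the proof.
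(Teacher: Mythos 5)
Your proof is correct and follows the same strategy as the paper: mollify $\mu$, apply Proposition~\ref{lemfourier} to the mollifications, approximate the singular kernel $\chi_{K_I\setminus\{0\}}(z)/|z|$ from below by continuous compactly supported functions, and pass to the limit via weak convergence and monotone convergence. The one place where you diverge is a pleasant simplification: where the paper proves the Fourier-side dominated-convergence identity $\lim_{\ve\to0}\int_{I^\perp}\|P_\theta\mu_\ve\|_2^2\,d\theta=\int_{I^\perp}\|P_\theta\mu\|_2^2\,d\theta$ (using \eqref{eq843}), you instead note that $P_\theta\psi_\ve=(P_\theta\mu)*(P_\theta\varphi_\ve)$ and invoke Young's inequality to get the uniform one-sided bound $\|P_\theta\psi_\ve\|_2\leq\|P_\theta\mu\|_2$, which is all the argument actually requires and sidesteps the dominated-convergence step entirely.
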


\begin{proof}We assume that the integral on the right hand side above is finite.
Fix $\phi : \R^2 \to \R$ a $C^\infty$ radial bump function and for $\ve>0$, let $\phi_\ve(x) = \frac{1}{\ve^2} \phi(\frac{x}{\ve})$.
Denote $\mu_\ve = \mu * \phi_\ve$. It is straightforward to check that the identity \rf{eq843} holds both for $\mu$ and $\mu_\ve$.
Then, by the dominated convergence theorem we deduce that
\begin{align*}
\lim_{\ve\to0} \int_{I^\bot} \|P_\theta\mu_\ve\|_2^2\,d\theta &
=  \lim_{\ve\to0}\int_{K_{I^\perp}} |x|^{-1} |\wh\mu(x)\,\wh\phi(\ve x)|^2 \, dx\\
& 
=   \int_{K_{I^\perp}} |x|^{-1} |\wh\mu(x)|^2 \, dx =
\int_{I^\bot} \|P_\theta\mu\|_2^2\,d\theta.
\end{align*}
Hence, by Proposition \ref{lemfourier} we have
$$\limsup_{\ve\to0} \iint_{x-y\in K_I} \frac1{|x-y|}\,d\mu_\ve(x)\,d\mu_\ve(y)\leq \int_{I^\bot} \|P_\theta\mu\|_2^2\,d\theta.$$
So it suffices to show that
\begin{equation}\label{eq7327}
\iint_{x-y\in K_I\setminus\{0\}} \frac1{|x-y|}\,d\mu(x)\,d\mu(y)\leq\limsup_{\ve\to0} \iint_{x-y\in K_I} \frac1{|x-y|}\,d\mu_\ve(x)\,d\mu_\ve(y).
\end{equation}
To this end, consider an arbitrary non-negative continuous, compactly supported, function $f(x)\leq \chi_{K_I}(x)\, |x|^{-1}$.
We have that 
$$\int f * \mu_\ve \,d\mu_\ve = \int (f * \mu*\phi_\ve *\phi_\ve)\,d\mu.$$
Since $f*\mu$ is compactly supported and continuous, $f * \mu*\phi_\ve *\phi_\ve$ converges uniformly to $f*\mu$ as $\ve\to0$, and
thus
$$\int f*\mu\,d\mu = \lim_{\ve\to0}\int f * \mu_\ve \,d\mu_\ve \leq\limsup_{\ve\to0} \iint_{x-y\in K_I} \frac1{|x-y|}\,d\mu_\ve(x)\,d\mu_\ve(y) .$$
As this holds uniformly for any continuous compactly supported function $f$ such that $0\leq f\leq \chi_{K_I}(x)\, |x|^{-1}$ and $K_I\setminus\{0\}$ is open,
\rf{eq7327} follows by the monotone convergence theorem. 
\end{proof}

\vv

\subsection{The higher dimensional case}\label{sechighdim}

Let $G(d, n)$ denote the Grassmanian space of $n$-dimensional linear subspaces of $\R^d$. Let $\gamma_{d,n}$ denote the natural probability measure on $G(d, n)$. For $V \in G(d,n)$, let $P_V : \R^d \to V$ be the orthogonal projection onto $V$. We define a metric on $G(d, n)$ by $d(V, W) = \| P_V - P_W \|$, where $\| \cdot \|$ denotes the operator norm. 

Recall that for any Schwartz function $\phi : \R^d \to \C$ and any $V \in G(d, n)$, we have
\begin{align}
\label{dd-fourier-transform-identity}
\wh{P_V\phi}(x) = \wh\phi(x) \quad\text{ for all $x \in V$},
\end{align}
where $(P_V\phi)(x) = \int_{x + V^\perp} \phi \, d\LL^{d-n}$ and $\wh{P_V\phi}$ denotes the $n$-dimensional Fourier transform on $V$.  The proof is identical to the proof of \eqref{2d-fourier-transform-identity}.

The following is the higher dimensional analogue of Lemma \ref{lem-2d-fourier-transform-of-kernel}.

\begin{lemma}
Let $B \subset G(d, n)$. Let $\sigma, \nu$ be measures on $\R^d$ given by
\begin{align}
\label{eq:defn-mu}
\int f \, d\sigma
&=
\int_B \int_V f \, d\LL^n \, d\gamma_{d,n}(V),
\\
\int f \, d\nu
&=
\int_B \int_{V^\perp} f \, d\LL^{d-n} \, d\gamma_{d,n}(V).
\end{align}
Then the (distributional) Fourier transform of $\sigma$ is $\nu$.
\end{lemma}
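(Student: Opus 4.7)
The plan is to mimic the proof of Lemma \ref{lem-2d-fourier-transform-of-kernel} essentially verbatim, replacing the $1$-dimensional line $L_\theta$ by the general $n$-plane $V$ and the integration over $I\subset[0,\pi)$ by integration over $B\subset G(d,n)$ against $\gamma_{d,n}$. By definition of the distributional Fourier transform, it suffices to show that
\[
\int \wh\phi \, d\sigma = \int \phi \, d\nu
\]
for every Schwartz function $\phi:\R^d\to\C$.

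First I would expand the left-hand side using the definition \eqref{eq:defn-mu} of $\sigma$ to get $\int_B \int_V \wh\phi \, d\LL^n\, d\gamma_{d,n}(V)$. For each fixed $V\in B$, the identity \eqref{dd-fourier-transform-identity} applied to $\phi$ gives $\wh\phi(x) = \widehat{P_V\phi}(x)$ for all $x\in V$, so the inner integral becomes $\int_V \widehat{P_V\phi}\,d\LL^n$. Since $P_V\phi$ is a Schwartz function on $V$, the $n$-dimensional Fourier inversion formula at the origin yields
\[
\int_V \widehat{P_V\phi}\,d\LL^n = (P_V\phi)(0) = \int_{V^\perp} \phi \, d\LL^{d-n}.
\]
Plugging this back in and recognizing the resulting double integral as $\int\phi\,d\nu$ by the definition of $\nu$ completes the proof.

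I do not expect a serious obstacle: the only routine verifications are that the orders of integration can be exchanged (Fubini applies since $\phi$ is Schwartz and $\gamma_{d,n}$ is a probability measure, so all integrals are absolutely convergent) and that the map $V\mapsto \int_V \wh\phi\,d\LL^n$ is $\gamma_{d,n}$-measurable, which follows from the continuity of $V\mapsto P_V$ in the operator-norm metric on $G(d,n)$ together with the rapid decay of $\wh\phi$ and $\phi$.
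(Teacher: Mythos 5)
Your proposal is correct and follows the paper's proof essentially verbatim: expand $\int\wh\phi\,d\sigma$ via the definition of $\sigma$, invoke \eqref{dd-fourier-transform-identity} to replace $\wh\phi$ by $\widehat{P_V\phi}$ on $V$, apply Fourier inversion at the origin to get $(P_V\phi)(0)=\int_{V^\perp}\phi\,d\LL^{d-n}$, and recognize the result as $\int\phi\,d\nu$. The added remarks on Fubini and measurability are fine but not part of the paper's argument.
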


\begin{proof}
Let $\phi : \R^d \to \C$ be a Schwartz function. Then, as in the proof of Lemma \ref{lem-2d-fourier-transform-of-kernel},
\begin{align*}
\int \wh\phi \, d\sigma
&=
\int_B \int_V \wh \phi \, d\LL^n \, d\gamma_{d,n}(V)
=
\int_B \int_V \widehat{P_V\phi} \, d\LL^n \, d\gamma_{d,n}(V)
\\
&=
\int_B P_V\phi(0) \, d\gamma_{d,n}(V)
=
\int_B \int_{V^\perp} \phi \, d\LL^{d-n} \, d\gamma_{d,n}(V)
=
\int \phi \, d\nu.
\qedhere
\end{align*}
\end{proof}

\vv
\begin{lemma}\label{eq:density-upper-bound}
Let $B \subset G(d, n)$. Let $\sigma$ be given by \eqref{eq:defn-mu}. Then $\supp \sigma \subset \bigcup_{V \in \bar B} V$, $\sigma \ll \LL^d$, and $\frac{d\sigma}{dx}(x) \leq \frac{c(d,n)}{|x|^{d-n}}$. 
\end{lemma}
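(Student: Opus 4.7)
The plan is to handle the three claims separately: the support claim is essentially topological, while the absolute continuity and the density bound both follow from a single pointwise estimate of the form $\sigma(B(x,\varepsilon))\leq c(d,n)\,\varepsilon^d\,|x|^{-(d-n)}$ for $0<\varepsilon\leq |x|/2$.

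For the support claim, observe that by the definition of $\sigma$, any $f$ vanishing on $\bigcup_{V\in B}V$ integrates to zero, hence $\supp\sigma\subset \overline{\bigcup_{V\in B}V}$. I would then verify that $\overline{\bigcup_{V\in B}V}=\bigcup_{V\in\bar B}V$. The non-trivial inclusion uses compactness of $G(d,n)$: if $x_n\to x$ with $x_n\in V_n\in B$, pass to a subsequence with $V_n\to V\in\bar B$ and use $\|P_{V_n}-P_V\|\to 0$ together with $P_{V_n}x_n=x_n$ to conclude $x=P_Vx\in V$.

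For the size estimate, fix $x\in\R^d\setminus\{0\}$ and $0<\varepsilon\leq |x|/2$. Since $V\cap B(x,\varepsilon)$ is an $n$-dimensional ball of radius $\sqrt{\varepsilon^2-\dist(x,V)^2}$ whenever $\dist(x,V)<\varepsilon$, we have $\LL^n(V\cap B(x,\varepsilon))\leq \omega_n\varepsilon^n\,\chi_{\{\dist(x,V)<\varepsilon\}}$, so that
\[
\sigma(B(x,\varepsilon))\;\leq\;\omega_n\,\varepsilon^n\,\gamma_{d,n}\bigl(\{V\in B:\dist(x,V)<\varepsilon\}\bigr).
\]
The main technical step is the bound
\[
\gamma_{d,n}\bigl(\{V\in G(d,n):\dist(x,V)<\varepsilon\}\bigr)\;\leq\;c(d,n)\,(\varepsilon/|x|)^{d-n}.
\]
I would prove this via rotational invariance of $\gamma_{d,n}$: writing $\dist(x,V)=|P_{V^\bot}x|=|x|\,|P_{V^\bot}(x/|x|)|$, the claim reduces to showing that $|P_{V^\bot}e|$ lies in $[0,t]$ with probability $\lesssim t^{d-n}$ for a fixed unit vector $e$. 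Equivalently, by swapping roles via the invariance of $\gamma_{d,n}$ under $O(d)$, one may fix $V^\bot$ and let $e$ be uniform on $S^{d-1}$; then $P_{V^\bot}e$ has density on $V^\bot\cong\R^{d-n}$ given (up to constants) by $(1-|y|^2)^{(n-2)/2}$ on the unit ball, which is bounded for $|y|\leq 1/2$, giving the desired $t^{d-n}$ bound. Combining yields $\sigma(B(x,\varepsilon))\leq c(d,n)\,\varepsilon^d/|x|^{d-n}$.

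Finally, this estimate immediately gives $\sigma\ll\LL^d$ on $\R^d\setminus\{0\}$ by Lebesgue's differentiation theorem for Radon measures, while $\sigma(\{0\})\leq \omega_n\,\varepsilon^n\,\gamma_{d,n}(B)\to 0$ as $\varepsilon\to 0$ handles the origin. Dividing by $\LL^d(B(x,\varepsilon))=\omega_d\,\varepsilon^d$ and taking $\varepsilon\to 0$ gives $\frac{d\sigma}{dx}(x)\leq c(d,n)/|x|^{d-n}$ for $\LL^d$-a.e.\ $x$. The main obstacle is the Grassmannian measure estimate above; the rest is bookkeeping via Lebesgue differentiation and compactness.
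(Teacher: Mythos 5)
Your proof is correct, but it takes a genuinely different and more self-contained route than the paper. The paper disposes of the absolute continuity and density bound in one line by citing the exact identity
\[
\int_{G(d,n)}\int_V f\,d\LL^n\,d\gamma_{d,n}(V)=c(d,n)\int_{\R^d}\frac{f(x)}{|x|^{d-n}}\,d\LL^d(x)
\]
(which is (24.2) in Mattila's Fourier book); since $\sigma$ is dominated by the left-hand side with $B=G(d,n)$, both claims follow immediately. You instead prove the local ball estimate $\sigma(B(x,\ve))\lesssim \ve^d/|x|^{d-n}$ directly, the crux being the Grassmannian bound $\gamma_{d,n}\bigl(\{V:\dist(x,V)<\ve\}\bigr)\lesssim(\ve/|x|)^{d-n}$, which you establish via $O(d)$-invariance and the observation that $P_{V^\perp}e$ for $e$ uniform on $S^{d-1}$ has density $\propto(1-|y|^2)^{(n-2)/2}$, bounded on $|y|\le 1/2$; you then conclude by Lebesgue differentiation (plus a separate check at the origin). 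Your ball estimate is in effect a localized form of Mattila's identity, so the two arguments carry the same content, but yours avoids the citation at the cost of the Grassmannian calculation — a reasonable trade-off, and your sketch of that step is sound, including the case $n=1$ where the exponent $(n-2)/2$ is negative but the density is still bounded away from the boundary sphere. Your treatment of the support claim (proving $\overline{\bigcup_{V\in B}V}=\bigcup_{V\in\bar B}V$ via compactness of $G(d,n)$ and $\|P_{V_k}-P_V\|\to0$) is also correct and slightly more detailed than the paper's ``immediate'' dismissal.
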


\begin{proof}
From the definition of $\sigma$, it is immediate that $\supp \sigma \subset \bigcup_{V \in \bar B} V$. The next two properties follow from the following identity:
\[
\int_{G(d,n)}
\int_V f \, d\LL^n \, d\gamma_{d,n}(V)
=
c(d,n)
\int_{\R^d} \frac{f(x)}{|x|^{d-n}}\, d\LL^d.
\]
For a proof of this identity, see (24.2) in \cite{Mattila-Fourier}.
\end{proof}

\begin{lemma}\label{lemfourier-higher-dim}
Let $\psi : \R^2 \to \R$ be a Schwartz function. Then for any set $B \subset G(d, n)$, we have
\[
\int_{B} \|P_V\psi\|_2^2\,d\gamma_{d,n}(V)
=
\iint
\frac{d\nu}{dx}(x-y)
\, \psi(x) \, \psi(y) \, dx \, dy.
\]
\end{lemma}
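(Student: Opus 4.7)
The plan is to mirror the proof of Proposition \ref{lemfourier}, replacing the explicit planar kernel with the distributional Fourier identity $\wh\sigma = \nu$ established in the previous lemma. In four steps: (i) apply Plancherel on each $V$; (ii) integrate over $B$ and recognize the resulting expression as $\int |\wh\psi|^2\,d\sigma$; (iii) interpret $|\wh\psi|^2$ as the Fourier transform of $\psi * \tilde\psi$ and invoke $\wh\sigma = \nu$; (iv) unfold the convolution using that $\nu$ has an $L^1_{\rm loc}$ density.

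In more detail, I would first fix $V\in G(d,n)$ and apply Plancherel on the $n$-plane $V$ together with the identity \eqref{dd-fourier-transform-identity}:
\[
\|P_V\psi\|_2^2 = \int_V |\wh{P_V\psi}|^2\,d\LL^n = \int_V |\wh\psi|^2\,d\LL^n.
\]
Integrating in $V$ against $\gamma_{d,n}$ over $B$ and using the definition \eqref{eq:defn-mu} of $\sigma$ gives
\[
\int_B \|P_V\psi\|_2^2\,d\gamma_{d,n}(V) = \int_B \int_V |\wh\psi|^2 \, d\LL^n\, d\gamma_{d,n}(V) = \int |\wh\psi|^2\,d\sigma.
\]

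Next, set $\tilde\psi(x)=\psi(-x)$. Since $\psi$ is real-valued, $\wh{\tilde\psi}(\xi)=\overline{\wh\psi(\xi)}$, so $|\wh\psi|^2 = \wh{\psi*\tilde\psi}$, and $\psi*\tilde\psi$ is Schwartz. Applying the previous lemma (i.e., $\wh\sigma = \nu$ in the sense of tempered distributions) to the Schwartz function $\psi*\tilde\psi$ yields
\[
\int |\wh\psi|^2\,d\sigma = \int \wh{(\psi*\tilde\psi)}\,d\sigma = \int (\psi*\tilde\psi)\,d\nu.
\]
By Lemma \ref{eq:density-upper-bound}, $\nu$ is absolutely continuous with density $\frac{d\nu}{dx}$, so expanding the convolution and changing variables $y=x-z$ (Jacobian $1$) gives
\[
\int (\psi*\tilde\psi)(z)\,\frac{d\nu}{dx}(z)\,dz = \iint \psi(x)\,\psi(x-z)\,\frac{d\nu}{dx}(z)\,dx\,dz = \iint \frac{d\nu}{dx}(x-y)\,\psi(x)\,\psi(y)\,dx\,dy,
\]
which is the claimed identity.

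The only point requiring some care is the legitimacy of the distributional pairing in step (iii): one needs that $\sigma$ is a tempered distribution so that $\langle \wh\sigma,\varphi\rangle = \langle \sigma,\wh\varphi\rangle$ applies to $\varphi = \psi*\tilde\psi$. This is immediate from Lemma \ref{eq:density-upper-bound}, which shows that $\sigma$ has a density controlled by $|x|^{-(d-n)}$, hence is locally integrable and of at most polynomial growth (in fact supported in the $\gamma_{d,n}$-union of planes in $\bar B$), so it defines a tempered distribution; the same applies to $\nu$. Everything else is routine, so I do not anticipate any serious obstacle.
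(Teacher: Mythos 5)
Your proof is correct and takes essentially the same route as the paper's (the paper simply says the proof is "identical to the proof of Proposition~\ref{lemfourier}"): both pass through the Fourier side, rewriting $\int_B\|P_V\psi\|_2^2\,d\gamma_{d,n}(V)$ as $\int|\wh\psi|^2\,d\sigma$ via Plancherel on each $V$, and then converting to the spatial side using $\wh\sigma=\nu$. You merely run the chain of equalities in the opposite direction from the planar proof (which starts from the convolution integral and ends at the projections), and your remark that $\sigma$ and $\nu$ are tempered distributions by Lemma~\ref{eq:density-upper-bound} is a correct and welcome bit of extra care.
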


\begin{proof}
The proof is identical to the proof of Proposition \ref{lemfourier}.
\end{proof}

To make Lemma \ref{lemfourier-higher-dim} more useful, we obtain a lower bound on $\frac{d\nu}{dx}$ via the following three lemmas.

\begin{lemma}\label{lemma:grassmanian-ball}
For all $V \in G(d, n)$ and for $\delta \lesssim_{d,n} 1$,
\[
\gamma_{d,n}(B(V, \delta)) \approx_{d,n} \delta^{n(d-n)}
\]
\end{lemma}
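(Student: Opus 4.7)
The plan is to exploit the $O(d)$-invariance of both $\gamma_{d,n}$ and the metric $d(V,W)=\|P_V-P_W\|$, and then set up local coordinates on $G(d,n)$ near a fixed $V_0$ in which a ball of radius $\delta$ becomes comparable to a Euclidean ball in $\R^{n(d-n)}$.

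First, observe that for any $g\in O(d)$ we have $P_{gV}=gP_Vg^{-1}$, and the operator norm is invariant under conjugation by isometries, so $d(gV,gW)=d(V,W)$. Combined with the $O(d)$-invariance of $\gamma_{d,n}$ and the transitivity of the $O(d)$-action on $G(d,n)$, this shows $\gamma_{d,n}(B(V,\delta))$ is independent of $V$. So it suffices to fix any $V_0\in G(d,n)$, say $V_0=\R^n\times\{0\}^{d-n}$.

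Next, introduce the standard graph chart: identify the space $M$ of linear maps $V_0\to V_0^\perp$ with $(d-n)\times n$ matrices, and define $\Phi:M\to G(d,n)$ by $\Phi(A)=V_A:=\{v+Av:v\in V_0\}$. This is a smooth diffeomorphism from a neighborhood of $0\in M$ onto a neighborhood of $V_0$ in $G(d,n)$. A direct linearization (minimize $\|v-v_0\|^2+\|Av-w_0\|^2$ over $v\in V_0$) gives an explicit formula for $P_{V_A}$ whose first-order expansion in $A$ is the block off-diagonal operator with $A$ and $A^*$ in the off-diagonal corners of $\R^d=V_0\oplus V_0^\perp$; this linearization has operator norm equal to $\|A\|$. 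Hence
$$\|P_{V_A}-P_{V_B}\|\approx_{d,n}\|A-B\|\qquad\text{whenever }\|A\|,\|B\|\lesssim_{d,n} 1,$$
where $\|\cdot\|$ on $M$ denotes the matrix operator norm.

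Finally, since $d\Phi|_0$ is an isomorphism onto the tangent space $T_{V_0}G(d,n)$, the pullback $\Phi^*\gamma_{d,n}$ has a strictly positive, smooth, bounded density with respect to Lebesgue measure on a neighborhood of $0\in M$ (it is the Jacobian of $\Phi$ against the $O(d)$-invariant volume form). Combining with the metric comparability, for $\delta\lesssim_{d,n}1$,
$$\gamma_{d,n}(B(V_0,\delta))\approx_{d,n}\LL^{n(d-n)}\bigl(\{A\in M:\|A\|\leq\delta\}\bigr)\approx_{d,n}\delta^{n(d-n)},$$
since all norms on $M\cong\R^{n(d-n)}$ are equivalent and Lebesgue balls of radius $\delta$ in any norm have volume $\approx\delta^{n(d-n)}$. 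The only mildly nontrivial points are the metric comparability and the non-vanishing of the Jacobian at $A=0$; both reduce to the linearization above, and neither should be a serious obstacle since they are standard facts about the geometry of the Grassmannian.
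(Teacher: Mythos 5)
The paper itself contains no argument for this lemma: its ``proof'' is the single line ``This is Proposition 4.1 of \cite{Fassler-Orponen}.'' So you have supplied a self-contained proof where the paper simply defers to a reference.

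Your argument is correct and is the standard one: reduce to a single base point by $O(d)$-invariance of both the metric $d(V,W)=\|P_V-P_W\|$ and of $\gamma_{d,n}$; pass to the affine graph chart $A\mapsto V_A=\{v+Av:v\in V_0\}$ on $M\cong\R^{n(d-n)}$; show the chart is biLipschitz between the matrix operator norm and the projection metric on a bounded neighborhood of $0$; and note that the pullback of $\gamma_{d,n}$ has a smooth, strictly positive, bounded density there, so the measure of a metric ball is comparable to the Lebesgue volume of the corresponding operator-norm ball in $M$, which is $\approx\delta^{n(d-n)}$. All of this is right, and the explicit computation $P_{V_A}(x,y)=\bigl((I+A^*A)^{-1}(x+A^*y),\,A(I+A^*A)^{-1}(x+A^*y)\bigr)$ indeed has first-order term at $A=0$ the block off-diagonal operator with entries $A$ and $A^*$, whose operator norm is $\|A\|$. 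The only place worth tightening is the ``Hence'' passing from the fact that $D\Phi|_0$ is an isometry to the two-sided bound $\|P_{V_A}-P_{V_B}\|\approx\|A-B\|$ on a neighborhood: an isometric derivative at one point does not by itself give biLipschitz on a ball. You should invoke the smoothness (indeed real-analyticity) of $A\mapsto P_{V_A}$, note that $D\Phi|_A$ is therefore close to an isometry for $\|A\|$ small, and then get the upper bound from the mean value inequality and the lower bound either from the same argument applied to the $C^1$ local inverse (which exists by the inverse function theorem) or by convexity. With that one sentence added, the proof is complete; as written it is a minor, easily-repaired gap rather than a flaw in the approach. Since the paper gives no proof, I cannot say whether your route matches Fässler--Orponen's, but this is the standard Grassmannian chart argument and almost certainly what the cited proposition does as well.
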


\begin{proof}
This is Proposition 4.1 of \cite{Fassler-Orponen}.
\end{proof}

\begin{lemma}\label{lemma:grassmanian-isomorphic}
Let $x \in \R^d \setminus\{0\}$. Then $G_x := \{V \in G(d,n) : x \in V \}$ and $G(d-1, n-1)$ are isomorphic as metric spaces.
\end{lemma}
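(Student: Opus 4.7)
The plan is to exhibit an explicit isometry. Fix any linear isometry $\iota : x^\perp \to \R^{d-1}$ (for instance, extend an orthonormal basis of $x^\perp$ to coordinates). Define
\[
\Phi : G_x \longrightarrow G(d-1, n-1), \qquad \Phi(V) = \iota(V \cap x^\perp).
\]
Since $x \in V$ and $\dim V = n$, the subspace $V \cap x^\perp$ has dimension exactly $n-1$, so $\Phi(V)$ is well defined. The inverse is $W \mapsto \mathrm{span}(x) \oplus \iota^{-1}(W)$, which is bijective onto $G_x$; so $\Phi$ is a bijection.

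To check that $\Phi$ preserves the metric, observe that for any $V \in G_x$ the decomposition $V = \mathrm{span}(x) \oplus (V \cap x^\perp)$ is orthogonal, so
\[
P_V = P_{\mathrm{span}(x)} + P_{V \cap x^\perp},
\]
where the projections are taken as operators on $\R^d$. Hence for $V_1, V_2 \in G_x$,
\[
P_{V_1} - P_{V_2} = P_{V_1 \cap x^\perp} - P_{V_2 \cap x^\perp}.
\]
The right-hand side vanishes on $\mathrm{span}(x)$ (since both $V_i \cap x^\perp \subset x^\perp$), and on $x^\perp$ it agrees with the difference of the orthogonal projections $P_{V_i \cap x^\perp}$ computed inside $x^\perp$. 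Therefore the operator norm of $P_{V_1} - P_{V_2}$ on $\R^d$ equals the operator norm of $P_{V_1 \cap x^\perp} - P_{V_2 \cap x^\perp}$ on $x^\perp$, which, since $\iota$ is an isometry, equals the operator norm of $P_{\Phi(V_1)} - P_{\Phi(V_2)}$ on $\R^{d-1}$. This gives $d(V_1, V_2) = d(\Phi(V_1), \Phi(V_2))$, so $\Phi$ is an isometry.

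The main (minor) subtlety is verifying that the operator norm of the difference $P_{V_1} - P_{V_2}$ is preserved when we restrict to $x^\perp$ and then transport via $\iota$; this comes down to the orthogonal splitting of $\R^d$ as $\mathrm{span}(x)\oplus x^\perp$ and the fact that the difference of the two projections kills $\mathrm{span}(x)$, which is immediate from the orthogonal decomposition noted above.
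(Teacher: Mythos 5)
Your proof is correct and uses essentially the same approach as the paper: you build the isometry from the orthogonal splitting $V = \mathrm{span}(x)\oplus (V\cap x^\perp)$ together with a linear isometry $x^\perp \cong \R^{d-1}$, just as the paper does via the map $A:\R^{d-1}\to\R^d$ with $A^TA = \mathrm{id}$ (your $\Phi$ is the inverse of the paper's $\Psi$). Your formulation via $P_V = P_{\mathrm{span}(x)} + P_{V\cap x^\perp}$ is a slightly cleaner way to arrive at the identity the paper obtains by direct computation, namely $P_{\Psi V}-P_{\Psi W} = A(P_V-P_W)A^T$, but the underlying argument is the same.
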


\begin{proof}
Let $A : \R^{d-1} \to \R^d$ be a linear map satisfying $A^TA = {\rm id}$ and whose image is the orthogonal complement of $x$. Consider the map $\Psi : G(d-1, n-1) \to G_x$ given by $V  \mapsto \operatorname{span}(AV, x)$. We will show $\Psi$ is an isometry. First we make two observations. 
\begin{enumerate}
\item For any $V \in G_x$, we have $P_V x = x$.
\item For any $z \in \R^{d-1}$ and $V \in G(d-1,n-1)$, we have $P_{\Psi V} Az = AP_Vz$.
\end{enumerate}

Let $V, W \in G(d-1,n-1)$. We need to show 
\[
\|P_{\Psi V} - P_{\Psi W} \|_{\R^d \to \R^d} = \| P_V - P_W \|_{\R^{d-1} \to \R^{d-1}}.
\]  
Let $y \in \R^d$ and write $y = \lambda x + Az$, where $\lambda \in \R$ and $z \in \R^{d-1}$. Note that $z = A^Ty$. Using the two observations above, we have
\[
(P_{\Psi V} - P_{\Psi W})y =   (P_{\Psi V} - P_{\Psi W})Az = A(P_{V} - P_{W})z =A(P_{V} - P_{W})A^Ty.
\]
Hence $P_{\Psi V} - P_{\Psi W} = A(P_{V} - P_{W})A^T$. Since $\|A\|_{\R^{d-1} \to \R^d} = \|A^T\|_{\R^d \to \R^{d-1}} = 1$, it follows that $\| P_{\Psi V} - P_{\Psi W} \| \leq \| P_{V} - P_{W} \|$. To show the reverse inequality, note that for any $z \in \R^{d-1}$, we have
\begin{align*}
|(P_V-P_W)z| 
&= 
|A(P_V-P_W)z| 
\\
&= 
|(P_{\Psi V}-P_{\Psi W})Az| 
\\
&\leq \| P_{\Psi V}-P_{\Psi W} \| |Az| 
\\
&= 
\| P_{\Psi V}-P_{\Psi W} \| |z|,
\end{align*}
which implies $\| P_{V} - P_{W} \| \leq \| P_{\Psi V} - P_{\Psi W} \|$.
\end{proof}

\begin{lemma}\label{eq:density-lower-bound}
Let $B = B(V_0, r) \subset G(d, n)$. Let $\sigma$ be given by \eqref{eq:defn-mu}. Then there is a $c = c(d, n, r) > 0$ such that 
\[
\frac{d\sigma}{dx}(x) \geq  \frac{c}{|x|^{d-n}}
\quad\text{ on the cone } 
\bigcup_{V \in B(V_0, \frac{1}{2}r)} V.
\]
\end{lemma}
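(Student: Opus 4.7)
The strategy is to derive an explicit formula for $\frac{d\sigma}{dx}$ via polar coordinates combined with an $O(d)$-invariant disintegration of $\gamma_{d,n}$ through the incidence variety $\{(V,\omega)\in G(d,n)\times S^{d-1}:\omega\in V\}$, and then bound the result from below using Lemmas \ref{lemma:grassmanian-isomorphic} and \ref{lemma:grassmanian-ball}.

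First I would rewrite $\int f\,d\sigma$ in polar coordinates on each $V$, so that
\[
\int f\,d\sigma=\int_0^\infty s^{n-1}\int_B\int_{V\cap S^{d-1}}f(s\omega)\,d\HH^{n-1}(\omega)\,d\gamma_{d,n}(V)\,ds.
\]
Next, both $d\HH^{n-1}|_{V\cap S^{d-1}}(\omega)\,d\gamma_{d,n}(V)$ and $d\gamma_{d-1,n-1}|_{G_\omega}(V)\,d\HH^{d-1}(\omega)$ (with $G_\omega$ identified with $G(d-1,n-1)$ via Lemma \ref{lemma:grassmanian-isomorphic}) are $O(d)$-invariant measures on the incidence variety, and hence proportional. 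Applying this disintegration followed by the polar-to-Cartesian change of variables $s^{n-1}\,ds\,d\HH^{d-1}(\omega)=|x|^{-(d-n)}\,dx$, I obtain
\[
\frac{d\sigma}{dx}(x)=C\,\frac{\gamma_{d-1,n-1}(G_{x/|x|}\cap B)}{|x|^{d-n}}.
\]
Taking $B=G(d,n)$ recovers the identity already used in the proof of Lemma \ref{eq:density-upper-bound}, which serves as a consistency check on the constant.

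The remaining task is to show $\gamma_{d-1,n-1}(G_{x/|x|}\cap B)\gtrsim_{d,n,r}1$ when $x$ lies in the cone. Pick $V_*\in B(V_0,r/2)$ with $x\in V_*$; then $V_*\in G_{x/|x|}\cap B(V_0,r/2)$. By the triangle inequality, $B_{G(d,n)}(V_*,r/2)\subset B(V_0,r)=B$, and intersecting with $G_{x/|x|}$ produces a ball of radius $r/2$ in $G_{x/|x|}\cong G(d-1,n-1)$ (the isomorphism of Lemma \ref{lemma:grassmanian-isomorphic} is an isometry). Lemma \ref{lemma:grassmanian-ball} then yields the required lower bound (with $r/2$ truncated at the admissible threshold if $r$ is large), completing the proof.

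The main difficulty is rigorously justifying the disintegration identity: verifying $O(d)$-invariance of both candidate measures on the incidence variety, invoking uniqueness of invariant measures to conclude proportionality, and tracking the constant $C$. An alternative is a direct change-of-variables calculation on the Grassmannian, but either way some care is required with normalizations.
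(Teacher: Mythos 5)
Your proof is correct, but it follows a genuinely different route from the paper. The paper never computes $\frac{d\sigma}{dx}$ exactly: it fixes $|x|=1$ (by homogeneity), bounds $\sigma(B(x,s))$ from below by $s^n\,\gamma_{d,n}\bigl((G_x)^{s/2}\cap B\bigr)$, and then estimates $\gamma_{d,n}\bigl((G_x)^{s/2}\cap B\bigr)\gtrsim s^{d-n}$ by a packing argument, namely a maximal $s$-separated family in $G_x\cap B(V_1,\frac r2)$ of cardinality $\gtrsim s^{-(n-1)(d-n)}$ (using Lemmas \ref{lemma:grassmanian-isomorphic} and \ref{lemma:grassmanian-ball}) gives pairwise disjoint $\gamma_{d,n}$-balls of radius $s/2$ contained in $(G_x)^{s/2}\cap B$. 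You instead derive the exact identity
$\frac{d\sigma}{dx}(x)=C\,\gamma_{d-1,n-1}\bigl(G_{x/|x|}\cap B\bigr)\,|x|^{-(d-n)}$
via polar coordinates and uniqueness of the $O(d)$-invariant probability measure on the incidence variety $\{(V,\omega):\omega\in V\cap S^{d-1}\}$, and then lower-bound the Grassmannian factor directly by a single application of Lemma~\ref{lemma:grassmanian-ball}. Your approach is conceptually cleaner and more informative: the same identity yields the upper bound of Lemma~\ref{eq:density-upper-bound} as well (since $\gamma_{d-1,n-1}(G_{x/|x|}\cap B)\le1$), whereas the paper proves the two bounds by separate arguments. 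The trade-off, as you note, is that the disintegration step requires invoking (or proving) uniqueness of invariant measures on a homogeneous space and carefully matching normalizations, while the paper's packing argument is elementary and self-contained, needing only what is already stated in Lemmas~\ref{lemma:grassmanian-ball} and~\ref{lemma:grassmanian-isomorphic}. Both proofs ultimately rest on the same two lemmas; the difference is in how the homogeneity and the Grassmannian geometry are organized.
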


\begin{proof}%
Note that $\sigma(\lambda A) = \lambda^n \sigma(A)$, which implies $\frac{d\sigma}{dx} ( \lambda x) = \lambda^{n-d} \frac{d\sigma}{dx}(x)$. Hence, it suffices to show $\frac{d\sigma}{dx}(x) \gtrsim_{d,n,r} 1$ for all $x \in \bigcup_{V \in B(V_0, \frac{1}{2}r)} V$ with $|x| = 1$. 

Fix $x \in \bigcup_{V \in B(V_0, \frac{1}{2}r)} V$ with $|x| = 1$. %
Let $G_x = \{V \in G(d,n) : x \in V \}$, and let $(G_x)^\delta \subset G(d, n)$ denote the $\delta$-neighborhood of $G_x$.

We claim that
\begin{align}\label{eq:bound-ball-by-nbhd}
\sigma(B(x,s))
\gtrsim
s^n\gamma_{d,n}((G_x)^{s/2} \cap B(V_0, r))
\qquad\text{for all } s > 0.
\end{align}
To see this, note that if $V \in (G_x)^{s/2}$, then there is some $W \in G_x$ such that $d(V, W) < \frac{s}{2}$. Then $|x - P_Vx| = |P_Wx - P_Vx| \leq \| P_W - P_V \| \leq d(W,V) < \frac{s}{2}$, so $\LL^n(V \cap B(x,s)) \gtrsim_{d,n} s^n$. Hence,
\begin{align*}
\sigma(B(x,s))
&=
\int_{B(V_0, r)}
\LL^n(V \cap B(x,s))
\, d\gamma_{d,n}(V)
\\
&\gtrsim_{d,n}
s^n\,\gamma_{d,n}((G_x)^{s/2} \cap B(V_0, r)),
\end{align*}
which proves \eqref{eq:bound-ball-by-nbhd}.

Next, we bound $\gamma_{d,n}((G_x)^{s/2} \cap B(V_0, r))$ from below. Fix a $V_1 \in G_x \cap B(V_0, \frac{1}{2}r)$. (Since $x \in \bigcup_{V \in B(V_0, \frac{1}{2}r)} V$, $V_1$ exists.) 

Suppose $s < r$. Let $F_s$ be a maximal $s$-separated subset of $G_x \cap B(V_1, \frac{1}{2}r)$. It follows from the maximality of $F_s$ that 
\begin{align}
\label{eqref:grassmanian-balls-cover}
G_x \cap B(V_1, \tfrac{1}{2}r) \subset \bigcup_{W \in F_s} (G_x \cap B(W,s)).
\end{align} 
By \eqref{eqref:grassmanian-balls-cover}, Lemma \ref{lemma:grassmanian-isomorphic} and Lemma \ref{lemma:grassmanian-ball} applied to $G(d-1, n-1)$, it follows that for $s \lesssim_{d,n,r} 1$,
\begin{align}\label{eq:card-F}
\# F_s \gtrsim_{d,n,r} s^{-(n-1)(d-n)}. 
\end{align}

Next, observe that the balls $\{B(W, \frac{s}{2})\}_{W \in F_s}$ are pairwise disjoint and contained in $(G_x)^{s/2} \cap B(V_0, r)$, so 
\begin{align}
\label{eq:bound-gamma-below}
\gamma_{d,n}((G_x)^{s/2} \cap B(V_0, r))
\geq
\sum_{W \in F_s} \gamma_{d,n}(B(W,\tfrac{s}{2}))
\gtrsim_{d,n,r}
s^{d-n}
\quad\text{ for } s \lesssim_{d,n,r} 1,
\end{align}
where we used \eqref{eq:card-F} and Lemma \ref{lemma:grassmanian-ball} in the last inequality. Finally, \eqref{eq:bound-ball-by-nbhd} and \eqref{eq:bound-gamma-below} imply $\frac{d\sigma}{dx}(x) \gtrsim_{d,n,r} 1$, as desired.
\end{proof}

\vv
\begin{coro}\label{corofourier1}
Let $V_0 \in G(d, n)$ and $s > 0$. Then there exist constants $\lambda,c>1$ such that for 
any Schwartz function $\psi : \R^d \to \R$,
\begin{multline*}
c^{-1}\! \iint_{x-y \in K(V_0^\bot,\lambda^{-1}s)}\frac{\psi(x)\, \psi(y)}{|x-y|^n}\,dx\,dy
 \leq 
\int_{B(V_0, s)} \| P_V\psi \|_2^2 \, d \gamma_{d,n}(V)\\  \leq c\iint_{x-y \in K(V_0^\bot,\lambda s)}
\frac{\psi(x)\, \psi(y)}{|x-y|^n}\,dx\,dy.
\end{multline*}
\end{coro}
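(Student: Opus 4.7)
The plan is to use Lemma~\ref{lemfourier-higher-dim} together with the pointwise bounds on $\frac{d\nu}{dx}$ given by Lemmas~\ref{eq:density-upper-bound} and \ref{eq:density-lower-bound}. The first observation is that, because the map $V \mapsto V^\bot$ is an isometry between $G(d,n)$ and $G(d,d-n)$, the measure $\nu$ appearing in Lemma~\ref{lemfourier-higher-dim} (built from the orthogonal complements $V^\bot$ for $V \in B(V_0, s)$) coincides with the measure $\sigma$ of \eqref{eq:defn-mu} constructed on $G(d, d-n)$ from the ball $B(V_0^\bot, s)$. Applying Lemmas~\ref{eq:density-upper-bound} and \ref{eq:density-lower-bound} with $(d, n)$ replaced by $(d, d-n)$ then gives
\[
\frac{d\nu}{dx}(x) \leq \frac{c(d,n)}{|x|^n}, \qquad \supp \nu \subset \bigcup_{W \in \bar B(V_0^\bot, s)} W,
\]
together with
\[
\frac{d\nu}{dx}(x) \geq \frac{c(d,n,s)}{|x|^n} \quad \text{on } \bigcup_{W \in B(V_0^\bot, s/2)} W.
\]

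The geometric core of the proof is a pair of inclusions relating these unions of $(d-n)$-planes to the Euclidean cones $K(V_0^\bot, t)$. The easy direction: if $W \in G(d,d-n)$ and $x \in W$, then
\[
\dist(x, V_0^\bot) = |P_{V_0} x| = |P_W x - P_{V_0^\bot} x| \leq d(W, V_0^\bot)\,|x|,
\]
so $\bigcup_{W \in \bar B(V_0^\bot, s)} W \subset \overline{K(V_0^\bot, s)}$, which is contained in $K(V_0^\bot, \lambda s)$ for any $\lambda > 1$. For the reverse inclusion, given $x \in K(V_0^\bot, t) \setminus \{0\}$ with $t$ small, I would write $x = x_1 + x_2$ with $x_1 = P_{V_0^\bot} x$ and $x_2 = P_{V_0} x$, $|x_2| < t|x|$, extend $e_1 := x_1/|x_1|$ to an orthonormal basis $e_1, \ldots, e_{d-n}$ of $V_0^\bot$, and define $W$ by replacing $e_1$ with $\tilde e_1 := x/|x|$. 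Then $x \in W$, and since $P_W - P_{V_0^\bot}$ acts nontrivially only on $\mathrm{span}(e_1, \tilde e_1)$, the identity $|\tilde e_1 - e_1|^2 = 2(1 - |x_1|/|x|) \leq 2t^2$ yields the operator-norm bound $\|P_W - P_{V_0^\bot}\| \leq 2\sqrt{2}\,t$. Choosing a universal constant $\lambda$ large enough and $t = \lambda^{-1} s$ then gives $W \in B(V_0^\bot, s/2)$, proving $K(V_0^\bot, \lambda^{-1} s) \subset \bigcup_{W \in B(V_0^\bot, s/2)} W$.

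Plugging these inclusions together with the density bounds into the identity of Lemma~\ref{lemfourier-higher-dim} yields both inequalities of the corollary when $\psi \geq 0$ (the case relevant for the applications, reached by mollification $\psi_\ve = \psi * \phi_\ve$ exactly as in the planar Corollary~\ref{corofourier}); the dependence of $c$ on $s$ comes only from the lower bound on $\frac{d\nu}{dx}$ in Lemma~\ref{eq:density-lower-bound}. The main obstacle is the operator-norm computation $\|P_W - P_{V_0^\bot}\| \leq 2\sqrt{2}\,t$ in the reverse inclusion, which is elementary but the only step requiring a genuine geometric estimate; everything else reduces to direct substitution into the previously proved lemmas.
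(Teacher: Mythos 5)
Your argument is the paper's argument: sandwich $\tfrac{d\nu}{dx}$ between $c_1\chi_{K_1}(x)/|x|^{n}$ and $c_2\chi_{K_2}(x)/|x|^{n}$ via Lemmas \ref{eq:density-upper-bound} and \ref{eq:density-lower-bound} applied to $G(d,d-n)$, show $K(V_0^\bot,\lambda^{-1}s)\subset K_1\subset K_2\subset K(V_0^\bot,\lambda s)$, and substitute into Lemma \ref{lemfourier-higher-dim}. You add a detailed (and correct) verification of the cone inclusions, which the paper merely asserts, and you rightly observe that the pointwise comparison of kernels transfers to the bilinear inequality only when $\psi\geq0$ --- a caveat the paper leaves implicit but which is harmless, since the corollary is subsequently applied only to mollifications of nonnegative measures.
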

\vv

\begin{proof}
By Lemma \ref{eq:density-upper-bound} and Lemma \ref{eq:density-lower-bound} (applied to $G(d, d-n)$),
\[
c_1 \frac{\chi_{K_1}(x)}{|x|^n} \leq \frac{d\nu}{dx}(x) \leq c_2 \frac{\chi_{K_2}(x)}{|x|^n},
\]
where $K_1 = \bigcup_{V \in B(V_0, \frac{1}{2}s)} V^\perp, K_2 = \bigcup_{V \in B(V_0, s)} V^\perp$.
 
For $\lambda$ sufficiently large, we have $K(V_0^\perp, \lambda^{-1}s) \subset K_1 \subset K_2 \subset K(V_0^\perp, \lambda s)$. Thus, 
\[
c_1 \frac{\chi_{K(V_0^\perp, \lambda^{-1}s)}(x)}{|x|^n} \leq \frac{d\nu}{dx}(x) \leq c_2 \frac{\chi_{K(V_0^\perp, \lambda s)}(x)}{|x|^n},
\]
so this corollary follows easily from Lemma \ref{lemfourier-higher-dim}.
\end{proof}

\vv
\begin{coro}\label{corofourier2}
Let $V_0 \in G(d, n)$ and $s > 0$. Then there exist constants $\lambda,c>1$ such that for 
any finite Borel measure $\mu$ in
$\R^d$,
$$
 \iint_{x-y \in K(V_0^\bot,s)}\frac{ d\mu (x)\,  d\mu (y)}{|x-y|^n} 
\leq c
\int_{B(V_0, \lambda s)} \| P_V\mu \|_2^2 \, d \gamma_{d,n}(V) .
$$
\end{coro}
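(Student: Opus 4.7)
The idea is to extend Corollary \ref{corofourier1} from Schwartz functions to finite Borel measures by mollification, mirroring the passage from Proposition \ref{lemfourier} to Corollary \ref{corofourier} in the planar case. Let $\lambda$ be the constant from Corollary \ref{corofourier1}. Applied with its ``$s$'' replaced by $\lambda s$, that corollary already gives, for every Schwartz $\psi:\R^d\to\R$,
\[
\iint_{x-y\in K(V_0^\bot,s)}\frac{\psi(x)\,\psi(y)}{|x-y|^n}\,dx\,dy \;\leq\; c\int_{B(V_0,\lambda s)}\|P_V\psi\|_2^2\,d\gamma_{d,n}(V).
\]
Only the passage from Schwartz $\psi$ to a finite Borel measure $\mu$ remains.

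\textbf{Reduction and smoothing.} Assume the right-hand side of the corollary is finite. By restricting $\mu$ to balls $B(0,R)$ and applying monotone convergence as $R\to\infty$ on both sides (the density of $P_V(\mu|_{B(0,R)})$ is bounded above by and increases to that of $P_V\mu$), we may further assume $\mu$ has compact support. Fix a nonnegative radial $C^\infty$ bump $\phi$ on $\R^d$ with $\int\phi=1$ and $\supp\phi\subset B(0,1)$, set $\phi_\ve(x)=\ve^{-d}\phi(x/\ve)$, and let $\mu_\ve=\mu*\phi_\ve$. Since $\mu$ is finite and compactly supported, $\mu_\ve$ is a Schwartz function, and the displayed Schwartz inequality applies with $\psi=\mu_\ve$.

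\textbf{Passing to the limit.} On the right, combining Lemma \ref{lemfourier-higher-dim} with Plancherel gives
\[
\int_{B(V_0,\lambda s)}\|P_V\mu_\ve\|_2^2\,d\gamma_{d,n}(V) \;=\; \int |\wh\mu(z)|^2\,|\wh\phi(\ve z)|^2\,d\sigma(z),
\]
where $\sigma$ is the measure of \eqref{eq:defn-mu} for $B=B(V_0,\lambda s)$. Since $|\wh\phi(\ve z)|\leq 1$ and $|\wh\phi(\ve z)|^2\to 1$ pointwise as $\ve\to 0$, and since the integrand without the $|\wh\phi|^2$ factor is integrable by assumption, dominated convergence sends this to $\int_{B(V_0,\lambda s)}\|P_V\mu\|_2^2\,d\gamma_{d,n}(V)$. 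For the left-hand side, given any nonnegative continuous compactly supported $f(x,y)\leq \chi_{K(V_0^\bot,s)}(x-y)\,|x-y|^{-n}$, write
\[
\iint f\,d\mu_\ve\otimes d\mu_\ve \;=\; \iint \bigl(f*(\phi_\ve\otimes\phi_\ve)\bigr)\,d\mu\otimes d\mu,
\]
and observe that $f*(\phi_\ve\otimes\phi_\ve)\to f$ uniformly, so the left-hand side tends to $\iint f\,d\mu\otimes d\mu$. Combining the two estimates yields $\iint f\,d\mu\otimes d\mu\leq c\int_{B(V_0,\lambda s)}\|P_V\mu\|_2^2\,d\gamma_{d,n}(V)$. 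Since $\{(x,y):x-y\in K(V_0^\bot,s)\}$ is open and disjoint from the diagonal, taking the supremum over such $f$ via monotone convergence yields the desired inequality.

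\textbf{Main obstacle.} The only delicate step is justifying dominated convergence on the right-hand side; this is precisely what the Fourier identity in Lemma \ref{lemfourier-higher-dim} is designed to provide, since it isolates the $\ve$-dependence into the uniformly bounded factor $|\wh\phi(\ve z)|^2$. This is exactly the step carried out in the proof of Corollary \ref{corofourier}, and no new difficulty arises in working in $\R^d$ rather than $\C$.
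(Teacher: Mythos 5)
Your argument is correct and is exactly what the paper intends: the paper merely states that Corollary \ref{corofourier2} ``follows from Corollary \ref{corofourier1}, along the same lines as the one of Corollary \ref{corofourier},'' and your mollification argument is precisely the $\R^d$ transplant of that proof, with the sensible extra step of first reducing to compactly supported $\mu$ (the compact-support hypothesis of Corollary \ref{corofourier} is not repeated in the statement of Corollary \ref{corofourier2}). One small phrasing nit in your Plan: the constant $\lambda$ in Corollary \ref{corofourier1} depends on its $s$-parameter, so ``applied with $s$ replaced by $\lambda s$'' produces a priori a different $\lambda'$, and you would need $\lambda'^{-1}(\lambda s)\geq s$; the robust route is to invoke Lemma \ref{lemfourier-higher-dim} together with the density lower bound of Lemma \ref{eq:density-lower-bound} directly, choosing the radius $r$ of the Grassmanian ball $B(V_0,r)$ large enough that $K(V_0^\perp,s)\subset\bigcup_{V\in B(V_0,r/2)}V^\perp$, and then set $\lambda=r/s$.
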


The proof of this corollary follows from Corollary \ref{corofourier1}, along the same lines as the one of Corollary \ref{corofourier}, and so we skip it.

\vv

\begin{remark}
The converse inequality 
\begin{equation}\label{eq:converse-inequality}
\int_{B(V_0, s)} \| P_V\mu \|_2^2 \, d \gamma_{d,n}(V) \leq c
 \iint_{x-y \in K(V_0^\bot,\lambda s)} \frac{ d\mu (x)\,  d\mu (y)}{|x-y|^n}
\end{equation}
does not hold for arbitrary measures. Indeed, in the case $d=2$, $n=1$, consider a segment $L$ through the origin, and let $K(V_0^\bot,\lambda s)$
be a cone such that $L$ is not contained in the closure of the cone. Then with $\mu = \HH^1|_{L}$, the integral on the left hand side is positive (and finite) while the one
on the right hand side is zero.

However, if we modify \eqref{eq:converse-inequality} by adding an additional term to the right-hand side, we can make the inequality true. This result not needed in our paper, but we include the details in Appendix \ref{section:reverse-ineq}.
\end{remark}

\vv

\section{The dyadic lattice of David and Mattila}\label{sec:DMlatt}

Now we will introduce the dyadic lattice of cubes
with small boundaries of David-Mattila associated with a Radon measure $\mu$. This lattice has been constructed in \cite[Theorem 3.2]{David-Mattila}. 
Its properties are summarized in the next lemma.

\begin{lemma}[David, Mattila]
\label{lemcubs}
Let $\mu$ be a compactly supported Radon measure in $\R^{d}$.
Consider two constants $C_0>1$ and $A_0>5000\,C_0$ and denote $W=\supp\mu$. 
Then there exists a sequence of partitions of $W$ into
Borel subsets $Q$, $Q\in \DD_{\mu,k}$, with the following properties:
\begin{itemize}
\item For each integer $k\geq0$, $W$ is the disjoint union of the ``cubes'' $Q$, $Q\in\DD_{\mu,k}$, and
if $k<l$, $Q\in\DD_{\mu,l}$, and $R\in\DD_{\mu,k}$, then either $Q\cap R=\varnothing$ or else $Q\subset R$.
\vv

\item The general position of the cubes $Q$ can be described as follows. For each $k\geq0$ and each cube $Q\in\DD_{\mu,k}$, there is a ball $B(Q)=B(x_Q,r(Q))$ such that
$$x_Q\in W, \qquad A_0^{-k}\leq r(Q)\leq C_0\,A_0^{-k},$$
$$W\cap B(Q)\subset Q\subset W\cap 28\,B(Q)=W \cap B(x_Q,28r(Q)),$$
and
$$\mbox{the balls\, $5B(Q)$, $Q\in\DD_{\mu,k}$, are disjoint.}$$

\vv
\item The cubes $Q\in\DD_{\mu,k}$ have small boundaries. That is, for each $Q\in\DD_{\mu,k}$ and each
integer $l\geq0$, set
$$N_l^{ext}(Q)= \{x\in W\setminus Q:\,\dist(x,Q)< A_0^{-k-l}\},$$
$$N_l^{int}(Q)= \{x\in Q:\,\dist(x,W\setminus Q)< A_0^{-k-l}\},$$
and
$$N_l(Q)= N_l^{ext}(Q) \cup N_l^{int}(Q).$$
Then
\begin{equation}\label{eqsmb2}
\mu(N_l(Q))\leq (C^{-1}C_0^{-3d-1}A_0)^{-l}\,\mu(90B(Q)).
\end{equation}
\vv

\item Denote by $\DD_{\mu,k}^{db}$ the family of cubes $Q\in\DD_{\mu,k}$ for which
\begin{equation}\label{eqdob22}
\mu(100B(Q))\leq C_0\,\mu(B(Q)).
\end{equation}
We have that $r(Q)=A_0^{-k}$ when $Q\in\DD_{\mu,k}\setminus \DD_{\mu,k}^{db}$
and
\begin{equation}\label{eqdob23}
\mu(100B(Q))\leq C_0^{-l}\,\mu(100^{l+1}B(Q))\quad
\mbox{for all $l\geq1$ with $100^l\leq C_0$ and $Q\in\DD_{\mu,k}\setminus \DD_{\mu,k}^{db}$.}
\end{equation}
\end{itemize}
\end{lemma}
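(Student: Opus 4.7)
The plan is to invoke Theorem 3.2 of David and Mattila \cite{David-Mattila}, where exactly this lattice has been constructed in full detail. What follows is an outline of the strategy I would use if I were to reconstruct the proof from scratch.

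The construction proceeds inductively on the scale $k$. At $k=0$ one takes $W$ (or a single large cube containing it) as the unique top-scale cube. To pass from scale $k$ to $k+1$, for each parent $R \in \DD_{\mu,k}$ one picks a maximal $A_0^{-(k+1)}$-separated net $\{x_Q\} \subset R$ of candidate centres, assigns every point of $R$ to a nearest $x_Q$ under a fixed tie-breaking rule, and declares each resulting Borel cell to be a child cube $Q$. Maximality of the net, together with $r(R) \leq C_0 A_0^{-k}$ and the assumption $A_0 \gg C_0$, then yields the inclusions $W \cap B(x_Q, A_0^{-(k+1)}) \subset Q \subset W \cap 28 B(Q)$ and the disjointness of the enlarged balls $5B(Q)$ within each scale. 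The explicit numerical constants $28$, $90$, and $5000$ come out of these triangle-inequality computations.

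For the small-boundary estimate \eqref{eqsmb2} I would leave the radius $r(Q)$ free inside $[A_0^{-(k+1)}, C_0 A_0^{-(k+1)}]$. A Chebyshev/Fubini averaging argument over the choice of $r$ shows that, outside a thin set of bad radii, each annular layer of $B(Q)$ of width $\approx A_0^{-(k+1)-l}$ carries only a geometric fraction of $\mu(90 B(Q))$. Summing over the annular layers that make up $N_l(Q)$ produces the stated exponential bound $(C^{-1} C_0^{-3d-1} A_0)^{-l}$. The doubling dichotomy is then obtained by searching in the same radius interval for some $r$ with $\mu(100 B(x_Q,r)) \leq C_0 \mu(B(x_Q,r))$: if such $r$ exists, pick it and record $Q$ in $\DD_{\mu,k}^{db}$; otherwise set $r(Q) = A_0^{-(k+1)}$. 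In the non-doubling case, for every $l$ with $100^l \leq C_0$ the radius $100^l A_0^{-(k+1)}$ still sits in the ``bad'' interval, so $\mu(100^{l+1} B(Q)) > C_0 \mu(100^l B(Q))$, and iterating gives \eqref{eqdob23}.

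The main obstacle is combinatorial rather than analytic: one must simultaneously enforce the tree nesting property, the small-boundary decay, and the doubling/non-doubling split, all of them depending on the same radius-selection rule. The delicate point is that the freedom in $r(Q)$ is used both for optimizing boundaries \emph{and} for achieving the doubling condition, so one must verify that the non-doubling branch of the construction does not destroy \eqref{eqsmb2}; this is handled by treating the two regimes separately at each scale and showing that the boundary bound needed in each regime is implied by a calculation compatible with the other. Once these compatibilities are arranged, the properties listed in the lemma follow essentially by bookkeeping, and one obtains exactly the statement cited from \cite[Theorem 3.2]{David-Mattila}.
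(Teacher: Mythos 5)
The paper does not prove this lemma; it simply cites \cite[Theorem 3.2]{David-Mattila}, which is exactly the route you propose, so your approach matches the paper's. Your supplementary sketch of the net-plus-radius-selection construction is a reasonable high-level summary of how David and Mattila argue, though it is extra credit here since the paper itself offers no proof.
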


\vv

We use the notation $\DD_\mu=\bigcup_{k\geq0}\DD_{\mu,k}$. Observe that the families $\DD_{\mu,k}$ are only defined for $k\geq0$. So the diameter of the cubes from $\DD_\mu$ are uniformly
bounded from above.
For $Q\in\DD_\mu$, we set $\DD_\mu(Q) =\{P\in\DD_\mu:P\subset Q\}$.
Given $Q\in\DD_{\mu,k}$, we denote $J(Q)=k$, and we set
$\ell(Q)= 56\,C_0\,A_0^{-k}$ and we call it the side length of $Q$. Notice that 
$$C_0^{-1}\ell(Q)\leq \diam(28B(Q))\leq\ell(Q).$$
Observe that $r(Q)\approx\diam(Q)\approx\ell(Q)$.
Also we call $x_Q$ the center of $Q$, and the cube $Q'\in \DD_{\mu,k-1}$ such that $Q'\supset Q$ the parent of $Q$.
 We set
$B_Q=28 B(Q)=B(x_Q,28\,r(Q))$, so that 
$$W\cap \tfrac1{28}B_Q\subset Q\subset B_Q.$$

We assume $A_0$ big enough so that the constant $C^{-1}C_0^{-3d-1}A_0$ in 
\rf{eqsmb2} satisfies 
$$C^{-1}C_0^{-3d-1}A_0>A_0^{1/2}>10.$$
Then we deduce that, for all $0<\lambda\leq1$,
\begin{align}\label{eqfk490}\nonumber
\mu\bigl(\{x\in Q:\dist(x,W\setminus Q)\leq \lambda\,\ell(Q)\}\bigr) + 
\mu\bigl(\bigl\{x\in 3.5B_Q\setminus Q:\dist&(x,Q)\leq \lambda\,\ell(Q)\}\bigr)\\
&\leq
c\,\lambda^{1/2}\,\mu(3.5B_Q).
\end{align}

We denote
$\DD_\mu^{db}=\bigcup_{k\geq0}\DD_{\mu,k}^{db}$.
Note that, in particular, from \rf{eqdob22} it follows that
\begin{equation}\label{eqdob*}
\mu(3B_{Q})\leq \mu(100B(Q))\leq C_0\,\mu(Q)\qquad\mbox{if $Q\in\DD_\mu^{db}.$}
\end{equation}
For this reason we will call the cubes from $\DD_\mu^{db}$ doubling. 
Given $Q\in\DD_\mu$, we set $\DD_\mu^{db}(Q) = \DD^{db}_\mu\cap\DD_\mu(Q)$.

As shown in \cite[Lemma 5.28]{David-Mattila}, every cube $R\in\DD_\mu$ can be covered $\mu$-a.e.\
by a family of doubling cubes:
\vv

\begin{lemma}\label{lemcobdob}
Let $R\in\DD_\mu$. Suppose that the constants $A_0$ and $C_0$ in Lemma \ref{lemcubs} are
chosen suitably. Then there exists a family of
doubling cubes $\{Q_i\}_{i\in I}\subset \DD_\mu^{db}$, with
$Q_i\subset R$ for all $i$, such that their union covers $\mu$-almost all $R$.
\end{lemma}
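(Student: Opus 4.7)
The plan is to reduce the lemma to showing that a certain exceptional set has zero $\mu$-measure. If $R\in\DD_\mu^{db}$ the family $\{R\}$ works, so assume $R$ is non-doubling, and let $\{Q_i\}_{i\in I}$ be the collection of maximal elements of $\DD_\mu^{db}$ contained in $R$, which are pairwise disjoint. If $x\in R$ lies in any doubling subcube of $R$, following the chain of ancestors inside $R$ shows $x$ must lie in some $Q_i$. Writing $Q_k(x)$ for the cube of $\DD_{\mu,k}$ containing $x$, this means
\[
F := R \setminus \bigcup_i Q_i \,=\, \{x \in R : Q_k(x) \notin \DD_\mu^{db} \text{ for every } k \geq J(R)\},
\]
so it suffices to show $\mu(F)=0$.

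The main tool will be a telescoping estimate along the chain of non-doubling ancestors of a point $x\in F$. I set $l_0 := \lfloor \log_{100} C_0\rfloor$ so that $100^{l_0} \leq C_0$. For $k>J(R)$, both $Q_k(x)$ and its parent $Q_{k-1}(x)$ are non-doubling, hence $r(Q_k(x))=A_0^{-k}$ and $r(Q_{k-1}(x))=A_0^{-k+1}$. Since $A_0\geq 5000C_0$, one has $100^{l_0+1}A_0^{-k}\leq 100C_0\,A_0^{-k}\leq A_0^{-k+1}/50$, and combining this with the estimate $|x_{Q_k(x)}-x_{Q_{k-1}(x)}|\leq 28 A_0^{-k+1}$ coming from $x_{Q_k(x)}\in Q_{k-1}(x)\subset 28B(Q_{k-1}(x))$ yields the key nesting
\[
100^{l_0+1}B(Q_k(x))\,\subset\, 100\,B(Q_{k-1}(x)).
\]
Applying \rf{eqdob23} with $l=l_0$ to $Q_k(x)$ and iterating from $J(R)$ then gives
\[
\mu(Q_k(x))\,\leq\,\mu(100\,B(Q_k(x)))\,\leq\, C_0^{-l_0(k-J(R))}\,\mu(100\,B(R)).
\]

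To close the argument, for each $k\geq J(R)$ I let $\mathcal N_k$ denote the family of non-doubling cubes of $\DD_{\mu,k}$ that are contained in $R$ and meet $F$; then $F\subset\bigcup_{Q\in\mathcal N_k}Q$. Each such $Q$ has $r(Q)=A_0^{-k}$, and the balls $5B(Q)$ are pairwise disjoint and contained in a ball of radius $\lesssim C_0\, A_0^{-J(R)}$ around $x_R$, so a volume count yields $\#\mathcal N_k\lesssim C_0^d\, A_0^{d(k-J(R))}$. Combining with the telescoping bound,
\[
\mu(F)\,\leq\,\sum_{Q\in\mathcal N_k}\mu(Q)\,\lesssim_{d}\, C_0^d\,\bigl(A_0^d\,C_0^{-l_0}\bigr)^{k-J(R)}\,\mu(100\,B(R)).
\]
The main obstacle, and the content of ``$A_0$, $C_0$ suitably chosen,'' is to force the geometric factor strictly less than $1$. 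Since $l_0\approx\log_{100}C_0$ grows unboundedly with $C_0$ while I may keep $A_0$ of order $C_0$ (say $A_0 = 5000C_0$), choosing $C_0$ large enough in terms of $d$ ensures $A_0^d<C_0^{l_0}$, and letting $k\to\infty$ then forces $\mu(F)=0$, as desired.
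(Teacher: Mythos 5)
Your proof is correct in its essentials and follows the route one expects for this kind of statement, namely the one in David--Mattila's paper (the paper under review only cites \cite[Lemma 5.28]{David-Mattila} without reproducing the argument). The identification of the exceptional set $F$, the nesting $100^{l_0+1}B(Q_k(x))\subset 100\,B(Q_{k-1}(x))$ derived from $A_0>5000C_0$ and the centre estimate, the iterated application of \rf{eqdob23}, and the volume count of the disjoint balls $5B(Q)$ are all accurate; the final competition between the polynomial cube-count growth $A_0^{d(k-J(R))}$ and the super-polynomial decay $C_0^{-l_0(k-J(R))}$ is exactly the right mechanism, and letting $k\to\infty$ does give $\mu(F)=0$.

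One small inaccuracy in the last paragraph: you suggest one may keep $A_0$ of order $C_0$ (``say $A_0=5000C_0$''). This is not compatible with the paper's standing assumptions: right after Lemma \ref{lemcubs} the paper requires $C^{-1}C_0^{-3d-1}A_0>A_0^{1/2}$, i.e.\ $A_0\gtrsim C_0^{6d+2}$, and the remark following Lemma \ref{lemcad22} explicitly records that $A_0$ must be much larger than $C_0$. Fortunately this does not break your argument: with $A_0\approx C_0^{6d+2}$ one has $A_0^d\approx C_0^{d(6d+2)}$, and since $l_0=\lfloor\log_{100}C_0\rfloor\to\infty$ one can still make $C_0^{l_0}>A_0^d$ by taking $C_0$ large enough (in terms of $d$). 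So the conclusion $A_0^d C_0^{-l_0}<1$ remains achievable; it just requires phrasing the choice as ``$C_0$ large enough (depending on $d$) and then $A_0$ chosen polynomially in $C_0$ as imposed elsewhere,'' rather than keeping $A_0$ linear in $C_0$.
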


The following result is proved in \cite[Lemma 5.31]{David-Mattila}.
\vv

\begin{lemma}\label{lemcad22}
Suppose that the constants $A_0$ and $C_0$ in Lemma \ref{lemcubs} are
chosen suitably.
Let $R\in\DD_\mu$ and let $Q\subset R$ be a cube such that all the intermediate cubes $S$,
$Q\subsetneq S\subsetneq R$ are non-doubling (i.e.\ belong to $\DD_\mu\setminus \DD_\mu^{db}$).
Then%
\begin{equation}\label{eqdk88}
\mu(100B(Q))\leq A_0^{-10d(J(Q)-J(R)-1)}\mu(100B(R)).
\end{equation}
\end{lemma}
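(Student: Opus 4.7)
The plan is to bound $\mu(100B(Q))$ by iterating the non-doubling estimate \eqref{eqdob23} along the chain of ancestors of $Q$ up to $R$. Let $Q = Q^0 \subsetneq Q^1 \subsetneq \cdots \subsetneq Q^m = R$ where $Q^i$ is the ancestor of $Q$ at level $J(Q)-i$, so that $m = J(Q) - J(R)$. The cases $m \leq 1$ are trivial since then the claimed exponent $10d(m-1)$ is non-positive and $\mu(100B(Q)) \leq \mu(100B(R))$ is easy to verify. So we assume $m \geq 2$; by hypothesis $Q^1, \ldots, Q^{m-1}$ are all non-doubling, and hence $r(Q^i) = A_0^{-(J(Q)-i)}$ exactly for $1 \leq i \leq m-1$.

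The first reduction is to show that $100B(Q) \subset 100B(Q^1)$. This follows from $|x_Q - x_{Q^1}| \leq 28\,r(Q^1)$ (because $x_Q \in Q \subset 28B(Q^1)$), together with $r(Q) \leq C_0 A_0^{-J(Q)}$ and $r(Q^1) = A_0 \cdot A_0^{-J(Q)}$: the standing assumption $A_0 > 5000 C_0$ gives
$$|x_Q - x_{Q^1}| + 100\,r(Q) \leq \bigl(28 + 100\,C_0/A_0\bigr) r(Q^1) \leq 100\,r(Q^1).$$

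Next, for each $1 \leq i \leq m-1$, set $l_0 := \lfloor \log_{100} C_0 \rfloor$, so that $100^{l_0} \leq C_0 < 100^{l_0+1}$. Applying \eqref{eqdob23} to the non-doubling cube $Q^i$ yields
$$\mu(100 B(Q^i)) \leq C_0^{-l_0}\,\mu(100^{l_0+1} B(Q^i)).$$
The same geometric bookkeeping as in the first reduction---now using $100^{l_0+1}\,r(Q^i) \leq 100\,C_0 \cdot A_0^{-(J(Q)-i)}$ against $r(Q^{i+1}) \geq A_0^{-(J(Q)-i-1)}$ and $|x_{Q^i} - x_{Q^{i+1}}| \leq 28\,r(Q^{i+1})$---shows $100^{l_0+1} B(Q^i) \subset 100 B(Q^{i+1})$. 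Iterating these $m-1$ estimates and combining with the first reduction gives
$$\mu(100 B(Q)) \leq C_0^{-l_0(m-1)}\,\mu(100 B(R)).$$

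To close the argument, we must verify that the constants $C_0, A_0$ can be chosen---while keeping $A_0 > 5000 C_0$---so that $C_0^{l_0} \geq A_0^{10d}$, which converts the per-step gain into the required factor $A_0^{-10d(m-1)}$. Using $l_0 \geq \log C_0/\log 100 - 1$, this condition reduces to $(\log C_0)^2/\log 100 - \log C_0 \geq 10 d \log A_0$; taking for instance $A_0 = 5001\,C_0$ this is achieved by picking $C_0$ large enough in a manner depending only on $d$, since then the left-hand side grows quadratically in $\log C_0$ while the right grows linearly. I expect this calibration of constants to be the main (though not deep) obstacle: one must choose $C_0$ large enough that the single-step gain $C_0^{-l_0}$ from \eqref{eqdob23} already dominates the prescribed decay $A_0^{-10d}$, while still respecting $A_0 > 5000 C_0$.
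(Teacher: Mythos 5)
The paper itself does not prove Lemma~\ref{lemcad22}; it simply cites \cite[Lemma 5.31]{David-Mattila}. So there is no in-paper proof to compare against, but your reconstruction follows what is essentially the only natural route given the tools stated in Lemma~\ref{lemcubs}: pass along the chain of ancestors $Q=Q^0\subsetneq Q^1\subsetneq\cdots\subsetneq Q^m=R$, use the exact radii $r(Q^i)=A_0^{-(J(Q)-i)}$ available for the intermediate non-doubling cubes, apply \eqref{eqdob23} at each step with the maximal admissible $l=l_0=\lfloor\log_{100}C_0\rfloor$, and verify the two ball inclusions $100B(Q)\subset 100B(Q^1)$ and $100^{l_0+1}B(Q^i)\subset 100B(Q^{i+1})$ from the crude center estimate $|x_{Q^i}-x_{Q^{i+1}}|\leq 28\,r(Q^{i+1})$ together with $A_0>5000\,C_0$. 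All of these geometric checks are correct, including the trivial cases $m\le 1$, and the iteration yields $\mu(100B(Q))\leq C_0^{-l_0(m-1)}\mu(100B(R))$ as you state. So the core of the argument is right.

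The one place I would flag is the final calibration. You reduce the claim to $C_0^{l_0}\ge A_0^{10d}$ and propose $A_0=5001\,C_0$ with $C_0$ large. That choice does satisfy $A_0>5000\,C_0$, but note that the paper later says explicitly that ``we need to choose $A_0$ much larger than $C_0$'' for this lemma and Lemma~\ref{lemcobdob} to hold, which suggests $A_0/C_0$ cannot just be the fixed number $5001$. The good news is that your own computation is robust to this: if one imposes $A_0=C_0^K$ for any fixed $K>1$ (so $A_0/C_0=C_0^{K-1}$ can be made as large as needed), then $C_0^{l_0}\ge A_0^{10d}$ becomes $l_0\ge 10dK$, i.e.\ $C_0\ge 100^{10dK}$, which is easily arranged. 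I would replace the ad hoc choice $A_0=5001\,C_0$ with this more flexible calibration, both to match the ``$A_0\gg C_0$'' remark and to make it transparent that your constraint is compatible with whatever other largeness requirements on $A_0/C_0$ arise elsewhere. With that small adjustment the proof is complete and presumably coincides in essence with the argument in David--Mattila.
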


We remark that for the preceding two lemmas to hold, we need to choose $A_0$ much larger than $C_0$. From now on we assume 
this condition to hold.

Given a ball (or an arbitrary set) $B\subset \R^{d}$ and a fixed $n\geq 1$, we consider its $n$-dimensional density:
$$\Theta_\mu(B)= \frac{\mu(B)}{\diam(B)^n}.$$

From the preceding lemma we deduce:

\vv
\begin{lemma}\label{lemcad23}
Let $Q,R\in\DD_\mu$ be as in Lemma \ref{lemcad22}.
Then
$$\Theta_\mu(100B(Q))\leq (C_0A_0)^d\,A_0^{-9d(J(Q)-J(R)-1)}\,\Theta_\mu(100B(R))$$
and
$$\sum_{S\in\DD_\mu:Q\subset S\subset R}\Theta_\mu(100B(S))\leq c\,\Theta_\mu(100B(R)),$$
with $c$ depending on $C_0$ and $A_0$.
\end{lemma}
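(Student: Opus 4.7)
The plan is to deduce both inequalities directly from Lemma \ref{lemcad22} combined with the size bounds on $B(Q)$ from Lemma \ref{lemcubs}. Recall that $\diam(100B(Q)) = 200\,r(Q)$ and that the radii satisfy $A_0^{-J(Q)} \leq r(Q) \leq C_0 A_0^{-J(Q)}$ for every $Q \in \DD_\mu$, so the density ratio between two generations is easy to control.

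For the first inequality, I would divide the mass estimate of Lemma \ref{lemcad22} by $\diam(100B(Q))^n$ and rewrite the right-hand side in terms of $\Theta_\mu(100B(R))$ by inserting the factor $\diam(100B(R))^n/\diam(100B(R))^n$. Bounding $r(R) \leq C_0 A_0^{-J(R)}$ in the numerator and $r(Q) \geq A_0^{-J(Q)}$ in the denominator yields
\[
\frac{\diam(100B(R))^n}{\diam(100B(Q))^n} \leq C_0^n A_0^{n(J(Q)-J(R))} \leq C_0^d A_0^{d(J(Q)-J(R))},
\]
since $n \leq d$. Multiplying this by the factor $A_0^{-10d(J(Q)-J(R)-1)}$ from Lemma \ref{lemcad22} produces the exponent
\[
d(J(Q)-J(R)) - 10d(J(Q)-J(R)-1) = -9d(J(Q)-J(R)-1) + d,
\]
which gives exactly the claimed bound $(C_0 A_0)^d A_0^{-9d(J(Q)-J(R)-1)} \Theta_\mu(100B(R))$.

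For the second inequality, I would apply the first inequality to every intermediate $S$ with $Q \subset S \subset R$. This is legitimate because any cube strictly between $S$ and $R$ is also strictly between $Q$ and $R$, hence non-doubling by the hypothesis of Lemma \ref{lemcad22}; so the pair $(S,R)$ satisfies the hypotheses of Lemma \ref{lemcad22} as well. Since the chain of cubes $\{S : Q \subset S \subset R\}$ contains at most one cube at each generation $k$ with $J(R) \leq k \leq J(Q)$, summing gives
\[
\sum_{S:\,Q \subset S \subset R} \Theta_\mu(100B(S))
\leq (C_0 A_0)^d\,\Theta_\mu(100B(R)) \sum_{k=J(R)}^{J(Q)} A_0^{-9d(k-J(R)-1)},
\]
and the geometric series is bounded by $A_0^{9d}/(1-A_0^{-9d})$, a constant depending only on $C_0$ and $A_0$.

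There is no real obstacle here; the argument is a routine exponent bookkeeping exercise. The only thing to keep straight is that the hypothesis of Lemma \ref{lemcad22} is inherited by every sub-chain $(S,R)$ inside the original chain $(Q,R)$, which ensures the first inequality can be re-applied termwise in the summation.
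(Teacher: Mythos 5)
Your proof is correct and takes the route the paper intends: the paper does not reproduce the argument (it cites \cite{Tolsa-memo}, Lemma 4.4, calling it ``easy''), and what you do is exactly the expected exponent bookkeeping. The algebra for the first bound is right: writing $m = J(Q)-J(R)$, combining $A_0^{-10d(m-1)}$ from Lemma~\ref{lemcad22} with $\diam(100B(R))^n/\diam(100B(Q))^n \leq C_0^n A_0^{nm} \leq C_0^d A_0^{dm}$ gives exponent $-10d(m-1)+dm = -9d(m-1)+d$, i.e.\ the stated $(C_0A_0)^d A_0^{-9d(m-1)}$. For the second bound, your observation that the non-doubling hypothesis for the pair $(Q,R)$ is inherited by every sub-pair $(S,R)$ with $Q\subset S\subset R$ is the key point and is correctly justified (the strict intermediates of $(S,R)$ are a subset of those of $(Q,R)$), and since the nested chain contains exactly one cube per generation the geometric series closes the argument.
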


For the easy proof, see
 \cite[Lemma 4.4]{Tolsa-memo}, for example.

\vv
We will also need the following technical result.

\begin{lemma}\label{lemcubdob*}
Let $R\in\DD_\mu$ such that $\mu(2B_R)\leq C_1\,\mu(R)$. Then there exists another cube $Q\subsetneq R$ from 
$\DD_\mu^{db}$ such that
$$\mu(Q)\approx\mu(R) \quad \mbox{ and } \quad \ell(Q)\approx\ell(R),$$
with the implicit constants depending on $C_1$.
\end{lemma}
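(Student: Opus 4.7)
My plan is a greedy descent from $R$ in the dyadic lattice $\DD_\mu$, combined with Lemma \ref{lemcad22}, to force a doubling descendant of $R$ with comparable size and measure within a uniformly bounded number of generations.

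\textbf{Setting up the descent.} Set $R_0 := R$ and, inductively, let $R_{j+1}$ be the child of $R_j$ in $\DD_\mu$ of maximal $\mu$-measure. Since the balls $5B(Q')$ over children $Q'$ of $R_j$ are disjoint and sit inside a bounded dilate of $B_{R_j}$, the number of children is bounded by a constant $N_0 = N_0(A_0, C_0, d)$. Hence $\mu(R_j) \geq N_0^{-j}\mu(R)$ and $\ell(R_j) \approx A_0^{-j}\ell(R)$. Let $j_0 := \min\{j \geq 1 : R_j \in \DD_\mu^{db}\}$. The cube $Q := R_{j_0}$ then automatically satisfies $Q \subsetneq R$, $Q \in \DD_\mu^{db}$, $\mu(Q) \gtrsim N_0^{-j_0}\mu(R)$ and $\ell(Q) \gtrsim A_0^{-j_0}\ell(R)$, so the proof reduces to bounding $j_0$ by a constant depending only on $C_1$, $A_0$, $C_0$, $d$.

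\textbf{Bounding $j_0$.} If $R_1,\ldots,R_j$ are all non-doubling, then every intermediate cube $S$ with $R_j \subsetneq S \subsetneq R$ is non-doubling, and Lemma \ref{lemcad22} gives
\[
\mu(R_j) \leq \mu(100 B(R_j)) \leq A_0^{-10d(j-1)}\,\mu(100 B(R)).
\]
Combined with $\mu(R_j) \geq N_0^{-j}\mu(R)$, this yields
\[
A_0^{10d(j-1)} \leq N_0^{j}\,\frac{\mu(100 B(R))}{\mu(R)}.
\]
To turn this into a bound on $j$, we need an estimate of the form $\mu(100 B(R)) \lesssim_{C_1, A_0, C_0, d}\, \mu(R)$. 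If $R \in \DD_\mu^{db}$ this is immediate, since $\mu(100B(R)) \leq C_0 \mu(B(R)) \leq C_0 \mu(R)$. If $R \notin \DD_\mu^{db}$, then by Lemma \ref{lemcubs} we have $r(R) = A_0^{-J(R)}$, so $100 B(R)$ is merely a bounded dilate of $2B_R = 56 B(R)$; the hypothesis $\mu(2B_R) \leq C_1 \mu(R)$, together with a finite covering of $100B(R) \cap W$ by cubes at generation $J(R)$ (whose number is controlled via the $5B$-disjointness) and the small-boundary property \eqref{eqsmb2}, yields the desired bound.

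\textbf{Main obstacle.} The principal difficulty is this last step, namely propagating the almost-doubling hypothesis from $2B_R$ to the slightly larger ball $100B(R)$ in the non-doubling case; in general measures can grow wildly across scales, so the control must come from a careful use of the David--Mattila lattice structure near $R$. Once $\mu(100 B(R)) \leq C'(C_1, A_0, C_0, d)\,\mu(R)$ is in hand, the inequality above becomes $A_0^{10d(j-1)} \leq C' N_0^{j}$, and since $N_0 \lesssim A_0^{d}$ this forces $j \leq C''(C_1, A_0, C_0, d)$, bounding $j_0$ and producing the required cube $Q = R_{j_0}$.
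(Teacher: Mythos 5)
Your greedy descent is close in spirit to the paper's argument (the paper fixes a large depth $N$ and takes the max-measure descendant of $R$ at generation $J(R)+N$; you descend one generation at a time through max-measure children — both strategies work and both rely on Lemma \ref{lemcad22} in the same way). You also correctly isolate the real difficulty: from $\mu(2B_R)\leq C_1\mu(R)$ one cannot in general deduce $\mu(100B(R))\lesssim\mu(R)$, since $100B(R)$ strictly contains $2B_R=56B(R)$ and measures can indeed jump across that dilation. However, the fix you sketch does not close this gap: covering $100B(R)\cap W$ by generation-$J(R)$ cubes $P$ gives boundedly many $P$, but there is no way to bound $\mu(P)$ for siblings or neighbors of $R$ in terms of $\mu(R)$, and the small-boundary estimate \eqref{eqsmb2} only controls thin shells around $\partial P$, not the bulk measure $\mu(P)$.

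The correct and much simpler fix — which is exactly what the paper does — is to stop one generation short. Apply Lemma \ref{lemcad22} with $Q=R_j$ and with $R$ replaced by the child $R_1$ in your chain; the intermediate cubes $R_2,\ldots,R_{j-1}$ are non-doubling, so
\[
\mu(R_j)\leq\mu(100B(R_j))\leq A_0^{-10d(j-2)}\mu(100B(R_1)).
\]
Since $R_1$ is a child of $R$, we have $r(R_1)\leq C_0 A_0^{-1}r(R)$ and $x_{R_1}\in R\subset 28B(R)$, so
\[
100B(R_1)\subset B\bigl(x_R,\,28r(R)+100C_0A_0^{-1}r(R)\bigr)\subset 56B(R)=2B_R
\]
because $A_0>5000\,C_0$. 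Therefore $\mu(100B(R_1))\leq\mu(2B_R)\leq C_1\mu(R)$, and combined with $\mu(R_j)\geq N_0^{-j}\mu(R)$ this yields $A_0^{10d(j-2)}\leq C_1 N_0^j$. Since $N_0\lesssim (C_0A_0)^d$ and $10d>d$, this forces $j\leq C''(C_1,A_0,C_0,d)$, and the rest of your argument goes through unchanged. So the plan is sound; the one missing idea is precisely to descend only to a child of $R$ before invoking the hypothesis $\mu(2B_R)\leq C_1\mu(R)$.
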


\begin{proof}
Suppose that $R\in\DD_{\mu,k}$. For some $N>1$ to be fixed later, denote by $I_N$ the family cubes from 
$\DD_{\mu,k+N}$ which are contained in $R$.
Recall that the balls $B(Q)$, $Q\in I_N$, are disjoint and that their radii satisfy
$$A_0^{-k-N}\leq r(Q)\leq C_0\,A_0^{-k-N}.$$
All the balls from $I_N$ intersect $R$ and are contained in $2B_R$ for $N$ big enough.
Thus we have
$$\#I_N\cdot C_d\, A_0^{(-k-N)d}\leq \LL^d\Bigl(\bigcup_{Q\in I_N} B(Q)\Bigr) \leq C_d\,\bigl(2\,C_0\,A_0^{-k}\bigr)^d,$$
and so
\[
\#I_N \leq 2^d\,C_0^d\,A_0^{Nd}.
\]
Therefore, the cube $Q'\in I_N$ with maximal measure satisfies
\begin{equation}\label{eqcontra*n}
\mu(Q') \geq 2^{-d}\,C_0^{-d}\,A_0^{-Nd}\mu(R).
\end{equation}
We claim now that if $N$ is big enough then there exists some cube $Q\in\DD_\mu^{db}$ such that
$Q'\subset Q\subsetneq R$. %
 Indeed, if such cube $Q$ does not exist, then denoting by $R'$ the son of $R$ that contains $Q'$,
  we deduce
$$\mu(Q')\leq \mu(100B(Q'))\leq A_0^{-10d(N-2)}\mu(100B(R'))\leq A_0^{-10d(N-2)}\mu(2B_R) \leq C_1\,
A_0^{-10d(N-2)}\mu(R),$$
taking into account that $100B(R')\subset2B_R$. For $N$ big enough, this estimate contradicts \rf{eqcontra*n}, and thus the cube $Q$ mentioned above exists. It is clear that this satisfies the
estimates $\mu(Q)\approx\mu(R)$ and $\ell(Q)\approx\ell(R)$, as wished.
\end{proof}

\vv

\vv

\section{The corona decomposition}\label{section:corona}

Let $\mu$ a Borel measure in $\R^d$ satisfying the growth condition
\begin{equation}\label{eqgrowth00}
\mu(B(x,r))\leq c_0\,r^n\quad\mbox{ for all $x\in\R^d$, $r>0$.}
\end{equation}
We consider the dyadic lattice $\DD_\mu$ of David-Mattila associated to $\mu$, and we assume that $\supp\mu\in\DD_\mu$ is the biggest cube in this lattice. (To this end, we assume $\DD_{\mu,k}$ to be defined for $k\geq k_0$, with an appropriate $k_0$.) Sometimes we will also denote by $R_0$ the initial
cube $\supp\mu$.
We allow all constants $c$, $C$, and other implicit constants to depend on $n$, $d$, and the parameters in the definition of the David-Mattila cubes.

Let $\ttt$ be a family of cubes from $\DD_\mu^{db}$ to be fixed below, with $R_0\in\ttt$.
For every $R\in\ttt$, denote by $\nex(R)$ the family of maximal cubes $Q\in\ttt$ that are contained in $R$, and by $\tr(R)$ the family of
cubes $Q\in\DD_\mu$ that are contained in $R$ and not contained in any $Q'\in\nex(R)$. Then, define 
$$\good(R) = R\setminus \bigcup_{Q\in\nex(R)} Q,$$
and for $Q,S\in\DD_\mu$ with $Q\subset S$,
$$\delta_\mu(Q,S) = \int_{2B_S\setminus 2B_Q}\frac1{|y-x_Q|^n}\,d\mu(y).$$

The next lemma is the main tool which will allow us to connect 
the energy $$\iint_{x-y\in K} \frac1{|x-y|^n}\,d\mu(x)\,d\mu(y)$$ to the curvature of $\mu$.

\begin{lemma}[Corona decomposition]\label{lemcorona}
Given $V_0\in G(d,d-n)$ and $s>0$, consider the cone $K:=K(V_0,s)\subset\R^d$.
Let $\mu$ be a Borel measure in $\R^d$ satisfying the growth condition \rf{eqgrowth00}.
 There exists a family $\ttt\subset\DD_\mu^{db}$ as above such that, for all $R\in\ttt$, there exists an $n$-dimensional 
 Lipschitz graph $\Gamma_R$ with the slope depending only on 
$s$ such that:
\begin{itemize}
\item[(a)] $\mu$-almost all $\good(R)$ is contained in $\Gamma_R$.

\item[(b)] For all $Q\in\nex(R)$ there exists another cube $\wt Q\in\DD_\mu$ such that $\delta_\mu(Q,\wt Q)\leq c\,\Theta_\mu(2B_R)$ and $2B_{\wt Q}\cap\Gamma_R\neq\varnothing$.

\item[(c)] For all $Q\in\tr(R)$, $\Theta_\mu(2B_Q)\leq c\,\Theta_\mu(2B_R)$.
\end{itemize}
Furthermore, the following packing condition holds:
\begin{equation}\label{eqpack*1} 
\sum_{R\in\ttt}\Theta_\mu(2B_R)\,\mu(R)\lesssim \mu(R_0) + \iint_{x-y\in K} \frac1{|x-y|^n}\,d\mu(x)\,d\mu(y),
\end{equation}
with the implicit constant depending only on $c_0$ and the aperture $s$ of the cone $K$.
\end{lemma}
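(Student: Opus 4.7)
\medskip

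\noindent\textbf{Proof plan.} I will follow a Jones--David--Semmes style corona decomposition, with the conical integral on the right-hand side of \eqref{eqpack*1} playing the role of the usual Carleson bound. Take $R_0:=\supp\mu\in\ttt$ as the root, and build $\ttt$ recursively top-down: given $R\in\ttt$, run a stopping-time inside $R$ that selects the maximal cubes $Q\subsetneq R$ satisfying either the \emph{high-density} condition $\Theta_\mu(2B_Q)>A\,\Theta_\mu(2B_R)$ (for a large constant $A$) or the \emph{big-cone} condition
\[
\iint_{x,y\in 2B_Q,\, x-y\in K} \frac{d\mu(x)\,d\mu(y)}{|x-y|^n}\ \geq\ \varepsilon\,\Theta_\mu(2B_R)\,\mu(Q)
\]
(for a small constant $\varepsilon>0$). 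Each stopping cube is then replaced by a comparable doubling cube via Lemma \ref{lemcubdob*} to form $\nex(R)\subset\DD_\mu^{db}$, whose members are appended to $\ttt$, and the process iterates. Property (c) is then immediate, because no cube in $\tr(R)$ triggers the high-density condition.

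\medskip

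For (a), the failure of the big-cone condition at every $Q\in\tr(R)$, combined with the elementary observation that $v\notin K(V_0,s)$ forces $|P_{V_0^\perp}v|\geq s|v|$, shows that the bulk of $\supp\mu\cap 2B_Q$ at scale $\ell(Q)$ concentrates inside an $n$-plane $L_Q$ transverse to $V_0$ with slope at most $\sqrt{1-s^2}/s$. A Whitney-type stitching (in the spirit of \cite{David-Mattila,Tolsa-bilip}) glues the family $\{L_Q\}_{Q\in\tr(R)}$ into a single $n$-dimensional Lipschitz graph $\Gamma_R$ over $V_0^\perp$ whose slope depends only on $s$, and any $x\in\good(R)$, approximable at every dyadic scale by the planes $L_Q$, necessarily lies on $\Gamma_R$. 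For (b), let $\wt Q$ be the first cube on the graph side of $Q\in\nex(R)$ whose ball $2B_{\wt Q}$ meets $\Gamma_R$; the bound $\delta_\mu(Q,\wt Q)\lesssim\Theta_\mu(2B_R)$ then follows by telescoping densities along the non-doubling chain joining $Q$ to $\wt Q$ via Lemma \ref{lemcad23}, together with the density control provided by (c).

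\medskip

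The packing estimate \eqref{eqpack*1} is the analytic heart of the argument. Split $\ttt\setminus\{R_0\}$ into those cubes that entered via (BC) and those that entered via (HD). For the first group, inverting the defining inequality yields
\[
\Theta_\mu(2B_R)\,\mu(R)\ \lesssim\ \varepsilon^{-1}\iint_{x,y\in 2B_R,\, x-y\in K}\frac{d\mu(x)\,d\mu(y)}{|x-y|^n},
\]
and summing uses the bounded overlap of $\{2B_R\}_{R\in\ttt}$, guaranteed by the small-boundary estimate \eqref{eqfk490} together with the doubling property \eqref{eqdob*}, to recover the global cone energy. For the second group, a standard geometric-series / maximal-function argument leverages the density jump factor $A$ at each (HD)-entry against the uniform bound $\Theta_\mu(2B_Q)\lesssim c_0$ coming from the growth condition \eqref{eqgrowth00}; the resulting (HD)-contribution is absorbed into the (BC)-contribution above together with the root term $\Theta_\mu(2B_{R_0})\,\mu(R_0)\lesssim c_0\,\mu(R_0)$.

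\medskip

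The principal obstacle I anticipate is the joint calibration of the thresholds $A$ and $\varepsilon$: on the geometric side, every non-(BC) cube must supply an approximating plane $L_Q$ close enough to a single transverse direction for the Whitney stitching to yield a Lipschitz graph with slope controlled purely by $s$; on the analytic side, the inverted (BC) inequality must lose only a factor depending on $s$, and the (HD) telescoping must survive long chains of non-doubling cubes, which is precisely the content of Lemmas \ref{lemcad22}--\ref{lemcad23}. The bounded-overlap step for $\{2B_R\}_{R\in\ttt}$ in higher dimension also forces a careful choice of $A_0$ relative to $C_0$ in Lemma \ref{lemcubs}, though this compatibility is already built into the setup of Section \ref{sec:DMlatt}.
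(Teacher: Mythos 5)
Your overall architecture is the right one — a David–Semmes-style corona decomposition with a high-density stop and a conical-energy stop, a Lipschitz graph built over $V_0^\perp$ from the fact that small cone energy forces mass into the complement of $K$, and a packing estimate driven by the ``Carleson'' quantity on the right of \eqref{eqpack*1}. But there are three concrete gaps that I do not think can be repaired without essentially rebuilding the stopping time as the paper does.

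First, you omit the \emph{low-density} stopping family. This is not cosmetic: the geometric step that turns ``small conical energy'' into ``good part lies on a Lipschitz graph'' works by locating, for any two cubes $Q,Q'\in\tree(R)$ with $Q'\cap K_Q^{1/2}\neq\varnothing$, an intermediate cube $P$ on which the energy bound forces $\Theta_\mu(2B_P)\lesssim\ve\,\Theta_\mu(2B_R)$; one then needs $P$ to have been \emph{stopped} so that $Q'\subsetneq P$ cannot be in $\tree(R)$. Without an $\LD$ trigger, low density alone does not stop the tree and the cone-separation argument for $\Gamma_R$ collapses. Relatedly, your big-cone condition is a single-scale threshold on the full ball $2B_Q$; the paper instead stops when the \emph{cumulative sum} $\sum_{S:Q\subset S\subset R}\EE_\mu(S)$ of annularly-localized energies exceeds $\ve\,\Theta_\mu(2B_R)$. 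The cumulative, Carleson-packing form is what makes $\sum_{Q\in\BCE(R)}\mu(Q)$ bounded by $\ve^{-1}\Theta_\mu(2B_R)^{-1}\sum_{S\in\tree(R)}\EE_\mu(S)\mu(S)$ via Fubini (this is \eqref{eqcc2}), and the annular truncation is what lets the double sum over scales collapse, by finite overlap among a fixed generation, to the global conical integral. A single-scale energy threshold would double-count scales and the Fubini step would not close.

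Second, your packing argument rests on ``bounded overlap of $\{2B_R\}_{R\in\ttt}$,'' which is false: $\ttt$ is a nested tree, and any point of $\supp\mu$ can lie in arbitrarily many of the balls $2B_R$. The actual packing proof in the paper has to separate the cubes into ``increasing density'' ($ID$) and non-$ID$ ones: for $ID$ cubes one telescopes the factor $A$ (Lemma \ref{lemID}), and for non-$ID$ cubes one uses that a definite fraction of $\mu(R)$ lands in $\good(R)$ once the LD and BCE contributions have been made small — which requires the separate estimate $\sum_{Q\in\LD(R)}\mu(Q)\lesssim\tau^{1/2}\mu(R)$ of Lemma \ref{lemld*} and the BCE Fubini bound above. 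That disjointness-of-$\good(R)$ step is where the tree structure is genuinely exploited, not any bounded overlap of the top balls. Your brief phrase about absorbing the HD contribution ``into the BC contribution'' gestures at this, but without the LD control and the cumulative BCE form, there is no way to show that the non-$ID$ cubes leave a definite mass in $\good(R)$, and the sum cannot be closed. Finally, your property (b) argument presumes that a cube $\wt Q$ with $2B_{\wt Q}\cap\Gamma_R\neq\varnothing$ lies within a bounded number of generations above $Q$; this is exactly the nontrivial content of Lemma \ref{lemc0q}, which in the paper is proved via the separated family $\wt\sep(R)$ and cannot simply be read off from the Whitney stitching.
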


The next Sections \ref{sec5}-\ref{sec8} are devoted to the proof of this lemma.
In these sections we assume that $\mu$ is a measure in $\R^d$ that satisfies \rf{eqgrowth00} and that $K=K(V_0,s)$ is a cone,
with $V_0\in G(d,d-n)$, $s>0$, 
such that
$$\iint_{x-y\in K} \frac1{|x-y|^n}\,d\mu(x)\,d\mu(y)<\infty.$$

\vv

\section{The construction of an approximate Lipschitz graph}\label{sec5}

From now on we will allow all the constants denoted by $c$ or $C$, and all the implicit constants in the relations $\lesssim$
and $\approx$ to depend on the parameters $C_0$ and $A_0$ of the David-Mattila lattice.

\subsection{The stopping cubes}

We consider constants $A\gg1$, $0<\ve \ll \tau \ll 1$, and $0 < \eta \ll1$ to be fixed below.
For $Q\in\DD_\mu$, we denote
\begin{equation}
\label{eq:defn-EEmuQ}
\EE_\mu(Q) = \frac1{\mu(Q)}\iint_{\begin{subarray}{l}x\in 2B_Q\\x-y\in K\\ \eta\,\ell(Q)\leq |x-y|\leq \eta^{-1}\ell(Q)
\end{subarray}} \frac1{|x-y|^n}\,d\mu(x)\,d\mu(y).\end{equation}
Observe that $\EE_\mu(Q)$ ``scales'' like $\Theta_\mu(2B_Q)$. (That is, both quantities have the same ``physical dimensions'' -- two factors of $\mu$ in the numerator and one factor of length in the denominator.)

Given a cube $R\in\DD_\mu^{db}$, we consider the following families of cubes:
\begin{itemize}
\item The high density family $\HD_0(R)$, which is made up of the cubes $Q\in\DD_\mu^{db}(R)$ which satisfy
$\Theta_\mu(2B_Q)\geq A\, \Theta_\mu(2B_R)$. 

\item The low density family $\LD_0(R)$, which is made up of the cubes $Q\in\DD_\mu(R)$ which satisfy
$\Theta_\mu(2B_Q)\leq \tau\, \Theta_\mu(2B_R)$.

\item The family $\BCE_0(R)$ of cubes with big conical Riesz energy, which is made up of the cubes $Q\in\DD_\mu(R) \setminus (\HD_0(R)\cup\LD_0(R))
$ such that
$$\sum_{S\in\DD_\mu:Q\subset S \subset R} \EE_\mu(S)
\geq \ve\,\Theta_\mu(2B_R).
$$
\end{itemize}

We denote by $\sss(R)$ the maximal (and thus disjoint) subfamily from $\HD_0(R)\cup\LD_0(R)
\cup\BCE_0(R)$, and we set
\begin{align*}
\HD(R) = \HD_0(R)\cap\sss(R),&\quad \LD(R) = \LD_0(R)\cap\sss(R),
\quad
\BCE(R) = \BCE_0(R)\cap\sss(R).
\end{align*}
Notice that the cubes from $\HD(R)$ are doubling, while the cubes from $\LD(R)$ and $\BCE(R)$ may be non-doubling.

We let $\tree(R)$ denote the subfamily of the cubes from $\DD_\mu(R)$ which are not strictly contained in any cube
from $\sss(R)$. (Note that it is possible for $\tree(R)$ to contain only the cube $R$ itself.)

\subsection{Preliminary estimates}
In this subsection we assume that $R\in\DD_\mu^{db}$.
The following statement is an immediate consequence of the construction of $\sss(R)$ and $\tree(R)$.

\begin{lemma}\label{lemdobbb1}
If $Q\in\DD_\mu$ and $Q\in \tree(R)\backslash \sss(R)$, then 
$$\tau\,\Theta_\mu(2B_R)\leq \Theta_\mu(2B_Q) \leq c\,A\,\Theta_\mu(2B_R).$$
Further, the second inequality also holds if $Q\in\sss(R)$.
\end{lemma}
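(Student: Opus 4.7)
The plan is to prove the two inequalities separately, and to split the upper bound into cases according to whether $Q$ is doubling and whether $Q\in\sss(R)$. The lower bound is the short part: I will argue that $Q\in\tree(R)\setminus\sss(R)$ forces $Q\notin\LD_0(R)$. Indeed, if $Q$ lay in $\LD_0(R)$, then since $\sss(R)$ is the maximal subfamily of $\HD_0(R)\cup\LD_0(R)\cup\BCE_0(R)$, the cube $Q$ would either be a member of $\sss(R)$ (contradicting $Q\notin\sss(R)$) or be strictly contained in some maximal such cube (contradicting $Q\in\tree(R)$). This immediately yields $\Theta_\mu(2B_Q)>\tau\,\Theta_\mu(2B_R)$.

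For the upper bound I will first dispose of the case $Q\in\tree(R)\setminus\sss(R)$ with $Q$ doubling: the identical maximality argument gives $Q\notin\HD_0(R)$, hence $\Theta_\mu(2B_Q)<A\,\Theta_\mu(2B_R)$ by definition of $\HD_0(R)$. To handle the remaining $Q\in\tree(R)\setminus\sss(R)$ with $Q$ non-doubling, I will introduce $\hat Q$, the smallest doubling cube in $\DD_\mu$ that contains $Q$ and is contained in $R$; such a $\hat Q$ exists because $R$ is itself doubling. By minimality every cube strictly between $Q$ and $\hat Q$ is non-doubling, so Lemma \ref{lemcad23} supplies $\Theta_\mu(100B(Q))\lesssim\Theta_\mu(100B(\hat Q))$. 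Combining $2B_Q\subset 100B(Q)$ on one side with the doubling inequality $\mu(100B(\hat Q))\leq C_0\,\mu(B(\hat Q))$ for $\hat Q$ on the other, this reduces to $\Theta_\mu(2B_Q)\lesssim\Theta_\mu(2B_{\hat Q})$. Moreover $\hat Q\in\tree(R)\setminus\sss(R)$ (if $\hat Q\in\sss(R)$ then $Q\subsetneq\hat Q$ would contradict $Q\in\tree(R)$), so the previously settled doubling case gives $\Theta_\mu(2B_{\hat Q})\lesssim A\,\Theta_\mu(2B_R)$.

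The last case is $Q\in\sss(R)$. If $Q=R$ the bound is trivial, so I will assume $Q\subsetneq R$ and pass to the parent $Q'$ of $Q$ in $\DD_\mu$. Maximality of $Q$ in $\sss(R)$ forces $Q'\notin\sss(R)$, and because $Q'\supsetneq Q$ the cube $Q'$ cannot lie strictly inside any stopping cube; thus $Q'\in\tree(R)\setminus\sss(R)$ (or $Q'=R$), and the previous cases yield $\Theta_\mu(2B_{Q'})\lesssim A\,\Theta_\mu(2B_R)$. Since $A_0\gg C_0$ in the David-Mattila lattice, a direct check gives $2B_Q\subset 2B_{Q'}$ and $r(Q')/r(Q)\leq C_0 A_0$; combining these yields $\Theta_\mu(2B_Q)\lesssim\Theta_\mu(2B_{Q'})\lesssim A\,\Theta_\mu(2B_R)$.

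The only real technical subtlety I foresee is the careful bookkeeping between the three balls $B(Q)$, $100B(Q)$, and $B_Q=28B(Q)$ in the non-doubling subcase, since Lemma \ref{lemcad23} is phrased in terms of $100B(\cdot)$-densities while the conclusion of the present lemma is phrased in terms of $2B_{(\cdot)}$-densities. Once one uses the doubling of $\hat Q$ (respectively of the parent $Q'$) to absorb the radius changes, everything collapses to the doubling case already settled, and I do not expect any genuine obstacle beyond this.
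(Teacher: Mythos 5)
Your proposal is correct and follows essentially the same route as the paper's proof: lower bound via $Q\notin\LD_0(R)$, upper bound in the doubling case via $Q\notin\HD_0(R)$, reduction of the non-doubling case to the smallest doubling ancestor $\hat Q$ (the paper's $P(Q)$) using Lemma \ref{lemcad23} and the doubling property of $\hat Q$, and finally passing to the parent when $Q\in\sss(R)$. You spell out a few steps (such as why $\hat Q\in\tree(R)\setminus\sss(R)$ and the inclusion $2B_Q\subset 2B_{Q'}$) that the paper leaves implicit, but the argument is the same.
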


\begin{proof}
The fact that $\Theta_\mu(2B_Q)\geq \tau\,\Theta_\mu(2B_R)$ for all $Q\in \tree(R)\backslash \sss(R)$ follows from 
the definition of the family $\LD(R)$.
To check that for such cubes it also holds $\Theta_\mu(2B_Q) \leq c\,A\,\Theta_\mu(2B_R)$, note first that this holds if $Q\in\DD_\mu^{db}(R)$ (with $c=1$).
If $Q\not\in\DD_\mu^{db}(R)$, let $P(Q)\in\DD_\mu^{db}(R)$ be the smallest doubling cube that contains $Q$ (such a cube exists because $R\in
\DD_\mu^{db}(R)$), so that $\Theta_\mu(2B_{P(Q)}) \leq A\,\Theta_\mu(2B_R)$.
For $j\geq 0$, denote by $Q_j$ the $j$-th ancestor of $Q$ (i.e.\ $Q_j\in\DD_\mu$ is such that $Q\subset Q_j$ and $\ell(Q_j)=A_0^j\,\ell(Q)$).
Let $i\geq 0$ be such that $P(Q)= Q_i$.
Since the cubes $Q_1,\ldots,Q_{i-1}$ do not belong to $\DD^{db}_\mu$, by Lemma \ref{lemcad23} we have
$$\Theta_\mu(2B_Q)\lesssim\Theta_\mu(100B(Q))\leq c(C_0,A_0)\,A_0^{-9di}\,\Theta_\mu(100B(Q_i))
\approx A_0^{-9di}\,\Theta_\mu(2B_{Q_i}) \lesssim A\,\Theta_\mu(2B_R).$$

Finally, if $Q\in\sss(R)$, we just need to take into account that $\Theta_\mu(2B_Q)\lesssim \Theta_\mu(2B_{\wh Q})$, 
where $\wh Q$ is the parent of $Q$.
\end{proof}
\vv

\vv

The next result is essentially proven in Lemma 6.3 from \cite{Azzam-Tolsa-GAFA}. %

\begin{lemma}\label{lemdob32}
There is some constant $C(A,\tau)>0$ big enough so that for any cube 
$Q\in\tree(R)$  there exists some cube $Q'\supset Q$ such that $Q'\in\DD_\mu^{db}\cap \tree(R)$
and $\ell(Q')\leq C(A,\tau)\,\ell(Q)$. 
If $R\not\in\sss(R)$, then we can take $Q'\in\tree(R)\setminus\sss(R)$.
\end{lemma}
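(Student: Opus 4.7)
The plan is to take $Q'$ to be (essentially) the smallest element of $\DD_\mu^{db}$ containing $Q$, and to bound the number of generations between $Q$ and $Q'$ using Lemma~\ref{lemcad23}, which forces the density of non-doubling ancestors to decay geometrically.

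First, the degenerate case $R\in\sss(R)$ is trivial: every proper subcube of $R$ is strictly contained in the stopping cube $R$, so $\tree(R)=\{R\}$, forcing $Q=R$, and one sets $Q'=R\in\DD_\mu^{db}$.

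Assume now that $R\notin\sss(R)$, and first treat $Q\in\tree(R)\setminus\sss(R)$. Let $Q'$ be the smallest cube of $\DD_\mu^{db}$ with $Q\subseteq Q'\subseteq R$; this exists because $R\in\DD_\mu^{db}$. If $Q'=Q$ we are done, so assume $Q\subsetneq Q'$; by minimality, every $S$ with $Q\subsetneq S\subsetneq Q'$ is non-doubling, and Lemma~\ref{lemcad23} gives
$$\Theta_\mu(100B(Q))\lesssim A_0^{-9d(J(Q)-J(Q')-1)}\,\Theta_\mu(100B(Q')).$$
Combining the growth of $\mu$ (which yields $\Theta_\mu(2B_Q)\lesssim \Theta_\mu(100B(Q))$) with the doubling property of $Q'$ (which yields $\Theta_\mu(100B(Q'))\lesssim\Theta_\mu(2B_{Q'})$), one converts this into
$$\Theta_\mu(2B_Q)\lesssim A_0^{-9d(J(Q)-J(Q')-1)}\,\Theta_\mu(2B_{Q'}).$$
A short argument shows $Q'\in\tree(R)\setminus\sss(R)$: if $Q'\subsetneq S$ for some $S\in\sss(R)$, or if $Q'\in\sss(R)$, then $Q\subsetneq Q'$ gives $Q\subsetneq S$ (or $Q\subsetneq Q'\in\sss(R)$), contradicting $Q\in\tree(R)$. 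Hence Lemma~\ref{lemdobbb1} applies to both $Q$ and $Q'$, yielding $\Theta_\mu(2B_Q)\geq \tau\,\Theta_\mu(2B_R)$ and $\Theta_\mu(2B_{Q'})\lesssim A\,\Theta_\mu(2B_R)$. Combining these estimates gives
$$\tau\,\Theta_\mu(2B_R)\lesssim A_0^{-9d(J(Q)-J(Q')-1)}\,A\,\Theta_\mu(2B_R),$$
so $J(Q)-J(Q')$ is bounded by a constant $C(A,\tau)$ depending only on $A$, $\tau$, $A_0$, $C_0$, $n$, $d$, and therefore $\ell(Q')\leq A_0^{C(A,\tau)}\ell(Q)$, as required.

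Finally, if $Q\in\sss(R)$, then $Q\neq R$ (since $R\notin\sss(R)$) and the parent $\widehat Q$ of $Q$ lies in $\tree(R)\setminus\sss(R)$: $\widehat Q\notin\sss(R)$ by maximality of $\sss(R)$, and $\widehat Q\in\tree(R)$ because any $S\in\sss(R)$ with $\widehat Q\subsetneq S$ would force $Q\subsetneq S$, contradicting $Q\in\sss(R)$ (maximality). Applying the previous case to $\widehat Q$ in place of $Q$ yields a cube $Q'\in\DD_\mu^{db}\cap(\tree(R)\setminus\sss(R))$ with $Q\subsetneq\widehat Q\subseteq Q'$ and $\ell(Q')\leq C(A,\tau)A_0\,\ell(Q)$. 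The only mild obstacle is the bookkeeping needed to verify that the chosen $Q'$ always lies in $\tree(R)$ (and in $\tree(R)\setminus\sss(R)$ when $R\notin\sss(R)$) and to propagate the constants through the switch between $100B(\cdot)$ and $2B_{\cdot}$; no new analytic estimate beyond Lemmas~\ref{lemcad23} and~\ref{lemdobbb1} is required.
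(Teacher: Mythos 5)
Your proof is correct and self-contained. The paper gives no proof of its own — it simply points to Lemma 6.3 of \cite{Azzam-Tolsa-GAFA} — but your argument (take the smallest doubling ancestor $Q'$ of $Q$ inside $R$, check it lies in $\tree(R)\setminus\sss(R)$, then bound the generation gap $J(Q)-J(Q')$ by combining the geometric density decay of Lemma~\ref{lemcad23} with the two-sided bounds of Lemma~\ref{lemdobbb1}, handling $Q\in\sss(R)$ by passing to its parent) is precisely the expected route and uses exactly the lemmas prepared for this purpose. One small misattribution worth fixing: the inequality $\Theta_\mu(2B_Q)\lesssim\Theta_\mu(100B(Q))$ does not come from the growth of $\mu$; it follows trivially from the inclusion $2B_Q = 56B(Q)\subset 100B(Q)$ together with the comparability of the two diameters. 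This does not affect the correctness of the argument.
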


\vv

\subsection{A key estimate}

For $x\in\R^d$ and $\lambda>0$ we denote
$$K_x^\lambda = K(x,V_0,\lambda s),$$
Given $Q\in\DD_\mu$, we also set
$$K_Q^\lambda = \bigcup_{x\in 2B_Q} K_x^\lambda.$$
In the case $\lambda=1$, we write $K_x$ and $K_Q$ instead of $K_x^1$ and $K_Q^1$.
\vv

\begin{lemma}\label{keylemma}
There exists some constant $M>1$ depending only on $s$ such that the following are true.
\begin{enumerate}
\item[(a)]
Suppose $Q \in \tree(R)$ and $Q' \in \DD_\mu(R)$ satisfy $Q' \cap (K_Q^{1/2} \setminus MB_Q) \neq \varnothing$ and $\ell(Q') \leq \frac{1}{M} \dist(Q', Q)$. Then $Q' \not\in \tree(R)$.
\item[(b)]
Let $J\subset \tree(R)$ be a family of pairwise disjoint cubes. Then
\begin{equation}\label{eqkey7}
\mu\biggl(\,\bigcup_{Q\in J} K_Q^{1/2} \cap (R\setminus M\,B_Q)\biggr) \leq C(A,\tau,M)\,\ve\,
\mu(R).
\end{equation}
\end{enumerate}
\end{lemma}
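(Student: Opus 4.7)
The common mechanism is geometric: the hypothesis $Q'\cap(K_Q^{1/2}\setminus MB_Q)\neq\varnothing$ together with $\ell(Q')\leq M^{-1}\dist(Q,Q')$ (and implicitly $\ell(Q)\lesssim M^{-1}\dist(Q,Q')$, since $Q'\not\subset MB_Q$) guarantees, for $M$ chosen large enough in terms of $s$, that \emph{every} pair $(x,y)\in 2B_Q\times 2B_{Q'}$ satisfies $x-y\in K$ and $|x-y|\approx\dist(Q,Q')$. Indeed, if $(x_0,p)\in 2B_Q\times Q'$ witnesses the cone membership $p-x_0\in K(V_0,s/2)$, then moving $x_0\to x$ and $p\to y$ perturbs the direction by an angle at most $(\ell(Q)+\ell(Q'))/|x_0-p|\leq 2/M$, so taking $M\geq M_0(s)$ keeps the perturbed vector inside $K(V_0,s)=K$.

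For part (a), assume toward contradiction that $Q'\in\tree(R)$. Let $S\in\DD_\mu$ be the smallest ancestor of $Q'$ with $\ell(S)\geq C_*\dist(Q,Q')$ for a suitable $C_*=C_*(s)>1$, so that $\ell(S)\approx\dist(Q,Q')$ and $2B_Q\cup 2B_{Q'}\subset 2B_S$. Every $(x,y)\in 2B_Q\times 2B_{Q'}$ then lies in the integration domain of $\EE_\mu(S)$ for $\eta$ small, giving
\[
\EE_\mu(S)\gtrsim \frac{\mu(2B_Q)\,\mu(2B_{Q'})}{\mu(S)\,\dist(Q,Q')^n}.
\]
Since $S,Q,Q'\in\tree(R)$, Lemma \ref{lemdobbb1} supplies $\Theta_\mu(2B_S)\lesssim A\,\Theta_\mu(2B_R)$ and $\Theta_\mu(2B_Q),\Theta_\mu(2B_{Q'})\gtrsim\tau\,\Theta_\mu(2B_R)$; combining with $\mu(2B_\cdot)\approx\Theta_\mu(2B_\cdot)\ell(\cdot)^n$ and the scale conditions $\ell(Q),\ell(Q')\approx\dist(Q,Q')/M$, one obtains $\EE_\mu(S)\gtrsim\tau^2 A^{-1}M^{-2n}\Theta_\mu(2B_R)$. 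In the degenerate sub-case where $\ell(Q)$ or $\ell(Q')$ is much smaller than $\dist(Q,Q')/M$, one replaces the offending cube with a $\tree(R)$-ancestor of the correct scale (using Lemma \ref{lemcubdob*} to pass to doubling surrogates whose mass realizes the density). Fixing $\ve$ small enough that $\ve\ll\tau^2/(AM^{2n})$ then yields $\EE_\mu(S)\geq\ve\Theta_\mu(2B_R)$, and the singleton sum $\sum_{S'\colon S\subset S'\subset R}\EE_\mu(S')\geq\EE_\mu(S)\geq\ve\Theta_\mu(2B_R)$ forces $S$ (or an ancestor up to $R$) to lie in $\HD_0\cup\LD_0\cup\BCE_0$, hence in an $\sss(R)$-cube strictly containing $Q'$, contradicting $Q'\in\tree(R)$.

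Part (b) is obtained by integrating the local estimate of (a) over $J$. Bound $\mu(\bigcup_{Q\in J}K_Q^{1/2}\cap(R\setminus MB_Q))\leq\sum_{Q\in J}\mu(K_Q^{1/2}\cap R\setminus MB_Q)$. For each $Q\in J$ and each $y\in K_Q^{1/2}\cap(R\setminus MB_Q)\cap\supp\mu$, let $S(Q,y)$ be the dyadic ancestor of $y$ at scale $\approx\dist(y,Q)$; by the cone geometry of the opening paragraph, $\int_{2B_Q}\chi_{x-y\in K}|x-y|^{-n}\,d\mu(x)\gtrsim\mu(2B_Q)/\dist(y,Q)^n$, and this quantity (multiplied by $d\mu(y)$) contributes to $\EE_\mu(S(Q,y))\mu(S(Q,y))$. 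Chebyshev-summing over $y$, exploiting the disjointness of $J$ and the bounded multiplicity of $\{2B_Q\}_{Q\in J}$, and swapping to a sum over scales, gives
\[
\mu\biggl(\bigcup_{Q\in J}K_Q^{1/2}\cap(R\setminus MB_Q)\biggr)\cdot\Theta_\mu(2B_R)\ \lesssim_{\tau,M}\ \sum_{S\in\tree(R)}\EE_\mu(S)\,\mu(S).
\]
The right-hand side is controlled via the $\BCE_0$-threshold: splitting $\tree(R)=\sss(R)\sqcup(\tree(R)\setminus\sss(R))$ and swapping order of summation, the contribution of $\tree(R)\setminus\sss(R)$ is bounded by $\ve\,\Theta_\mu(2B_R)\,\mu(R)$ (since any $x\in R$ has $\sum_{S\in\tree(R)\setminus\sss(R),\,S\ni x}\EE_\mu(S)<\ve\Theta_\mu(2B_R)$, by maximality of $\sss(R)$), while $\sum_{T\in\sss(R)}\EE_\mu(T)\mu(T)$ is absorbed into the packing estimate \eqref{eqpack*1} built inductively from the next level of $\ttt$.

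The main obstacle is the degeneracy in (a) when $(\ell(Q)\ell(Q')/\dist(Q,Q')^2)^n\ll M^{-2n}$, which would degrade the single-scale energy estimate; the resolution is the passage to $\tree(R)$-ancestors of $Q$ and $Q'$ at scale $\dist(Q,Q')/M$, whose masses do realize the correct densities, possibly through doubling surrogates produced by Lemma \ref{lemcubdob*}. A parallel subtlety in (b) is the $\sss(R)$-correction in the Carleson sum, which must be handled by a careful accounting of the three families $\HD(R),\LD(R),\BCE(R)$ separately, absorbing the $\HD$ and $\LD$ contributions into the density-based packing inherent in the corona iteration.
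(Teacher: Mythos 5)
Your overall geometric setup for the cone argument is right, and the big-picture strategy (lower-bound a single-scale conical energy and invoke the stopping conditions) is in the paper's spirit, but there are two genuine problems.

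\textbf{Part (a): the density lower bounds are not available where you need them.} You invoke Lemma~\ref{lemdobbb1} to get $\Theta_\mu(2B_Q),\Theta_\mu(2B_{Q'})\gtrsim\tau\,\Theta_\mu(2B_R)$ from $Q,Q'\in\tree(R)$. But that lemma only gives the lower density bound for cubes in $\tree(R)\setminus\sss(R)$; a cube in $\sss(R)$ (e.g.\ in $\LD(R)$) can have arbitrarily small density and is still in $\tree(R)$. Since the hypothesis allows $Q\in\sss(R)$, and you assume $Q'\in\tree(R)$ for contradiction (which also allows $Q'\in\sss(R)$), your key inequality $\EE_\mu(S)\gtrsim\tau^2 A^{-1}M^{-2n}\Theta_\mu(2B_R)$ can fail. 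Your ``pass to a $\tree(R)$-ancestor'' remark is the right instinct but is not enough as stated: even in the non-degenerate case $\ell(Q')\approx M^{-1}\dist(Q,Q')$ you cannot use $Q'$ itself, you must pass to its parent, and you never verify the configuration survives this replacement. The paper sidesteps all of this with a one-sided argument: it bounds $\Theta_\mu(2B_P)$ for an ancestor $P$ of $Q'$ from \emph{above} by $\EE_\mu(S)$, where $S$ is a strict ancestor of $Q$ (hence $S\in\tree(R)\setminus\sss(R)$ automatically, so $\EE_\mu(S)\le\ve\Theta_\mu(2B_R)$), and concludes $P\in\LD_0(R)$, hence $Q'\subsetneq P$ cannot be in $\tree(R)$. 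No lower density bound on the $Q'$-side is needed, and the resulting constraint is the cleaner $\ve\lesssim\tau$ instead of $\ve\ll\tau^2/(AM^{2n})$.

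\textbf{Part (b): the final absorption step is circular.} You write that $\sum_{T\in\sss(R)}\EE_\mu(T)\,\mu(T)$ is ``absorbed into the packing estimate \eqref{eqpack*1} built inductively from the next level of $\ttt$.'' But \eqref{eqpack*1} is the conclusion of Lemma~\ref{lemcorona}, whose proof (via Lemma~\ref{lemkey62}) depends on the present Lemma~\ref{keylemma} through Lemma~\ref{lem47}; you cannot invoke it here. In fact no such term arises: in the paper's argument the assigned cubes $S(P)$ are strict ancestors of cubes $Q\in J\subset\tree(R)$, hence lie in $\tree(R)\setminus\sss(R)$, and the Carleson bound $\sum_{S\in\tree(R)\setminus\sss(R),\,Q\subset S\subset R}\EE_\mu(S)\le\ve\,\Theta_\mu(2B_R)$ (a direct consequence of the $\BCE_0$ stopping rule) suffices with no $\sss(R)$ correction. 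Your $\HD/\LD/\BCE$ bookkeeping remark is a red herring. (A minor point: $S(Q,y)$ should be a dyadic ancestor of $Q$, not of $y$, since the integration in $\EE_\mu(S)$ constrains $x\in 2B_S$ and $x$ is the variable near $Q$.)
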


\begin{proof}[Proof of (a)]
Let $P \in \DD_\mu(R)$ be such that $Q' \subsetneq P \subset R$, $P\subset K_Q^{3/4}$ and $\ell(P)\approx\dist(P,Q)$. Let $S\in\DD_\mu(R)$ be such that $Q\subsetneq S\subset R$, $\ell(S)\approx \frac1M\,\ell(P)$, and
$\dist(P,S)\approx \ell(P)$. For $M$ big enough and an appropriate choice of the implicit constants, we have
$$2B_P\subset K_x\quad \mbox{ for all $x\in 2B_S$.}$$
Therefore,
\begin{equation}\label{keylemma-keyineq}
\mu(2B_P)\,\mu(2B_S)\,\frac1{\ell(P)^n} \lesssim \iint_{\begin{subarray}{l}x\in 2B_{S}\\x-y\in K\\ \eta\,\ell(S)\leq |x-y|\leq \eta^{-1}\ell(S)
\end{subarray}} \frac1{|x-y|^n}\,d\mu(x)\,d\mu(y) = \mu(S)\,\EE_\mu(S),
\end{equation}
assuming $\eta$ small enough (depending on $M$).

Since $Q \in \tree(R)$ and $Q \subsetneq S$, we have $S\not\in\sss(R)$ and thus 
$$\Theta_\mu(2B_P) \approx \frac{\mu(2B_P)}{\ell(P)^n}  \lesssim \frac{\mu(S)}{\mu(2B_S)} \EE_\mu(S) \leq \EE_\mu(S) \leq \ve \Theta_\mu(2B_R).$$
Thus if $\ve$ is small enough, then $P \in LD_0(R)$. Since $Q' \subsetneq P$, it follows that $Q' \not\in \tree(R)$.
\end{proof}

\begin{proof}[Proof of (b)]
We can assume that the cubes of the family $J$ cover $R$. Otherwise we replace $J$ by a suitable enlarged family $J'$.

For a fixed $Q\in J$, if $M$ is big enough, we can cover $K_Q^{1/2}\cap (R\setminus MB_Q)$ by a family of cubes
$P\in\DD_\mu(R)$ such that
$P\subset K_Q^{3/4}$, $P\cap (R\setminus MB_Q)\neq\varnothing$, and $\ell(P)\approx\dist(P,Q)$. We denote by $I_Q$ this family. We assign a cube $S_{P,Q}\in\DD_\mu(R)$ to each $P\in I_Q$ such that $Q\subsetneq S_{P,Q}\subset R$, $\ell(S_{P,Q})\approx \frac1M\,\ell(P)$, and
$\dist(P,S_{P,Q})\approx \ell(P)$. As in the proof of \eqref{keylemma-keyineq}, for $M$ big enough, $\eta$ small enough, and an appropriate choice of the implicit constants, we have
$$\mu(2B_P)\,\mu(2B_{S_{P,Q}})\,\frac1{\ell(P)^n} \lesssim \mu(S_{P,Q})\,\EE_\mu(S_{P,Q}).$$
Since $Q\subsetneq S_{P,Q}$, we have $S_{P,Q}\not\in\sss(R)$ and thus 
$$\mu(2B_{S_{P,Q}})\,\frac1{\ell(P)^n}\approx_M \mu(2B_{S_{P,Q}})\,\frac1{\ell(S_{P,Q})^n}\approx_{A,\tau,M} \Theta_\mu(2B_R),$$
so
$$\Theta_\mu(2B_R)\,\mu(P) \lesssim_{A,\tau,M} \mu(S_{P,Q})\,\EE_\mu(S_{P,Q}).$$

Consider now a maximal subfamily $\AZ\subset\bigcup_{Q\in J} I_Q$, so that the cubes from $\AZ$ are pairwise disjoint and
$$\bigcup_{Q\in J} K_Q^{1/2} \cap (R\setminus M\,B_Q)\subset \bigcup_{P\in \AZ}P.$$
For each $P\in\AZ$ we choose a cube $S(P):=S_{P,Q}$, where $Q$ is such that $P\in I_Q$. The precise choice of $Q$ does not matter as long as $P\in I_Q$.
Observe that for each cube $S\in\DD_\mu$ there is at most a bounded number (depending on $M$) of cubes $P\in\AZ$ such that $S=S(P)$, taking into account that $\ell(S(P))\approx \frac1M\,\ell(P)$ and $\dist(P,S(P))\approx \ell(P)$.
Further, all the cubes $\{S(P)\}_{P\in\AZ}$ belong to $\tree(R)\setminus\sss(R)$.
As a consequence,
\begin{align}\label{eqj33}
\mu\biggl(\,\bigcup_{Q\in J} K_Q^{1/2} \cap (R\setminus M\,B_Q)\biggr) & \leq \sum_{P\in\AZ}\mu(P)\\
& \lesssim_{A,\tau,M}\frac1{\Theta_\mu(2B_R)} \sum_{P\in\AZ}\mu(S(P))\,\EE_\mu(S(P))\nonumber\\
& \lesssim_{A,\tau,M}\frac1{\Theta_\mu(2B_R)} \sum_{{\substack{S\in\tree(R)\setminus\sss(R)\\
\text{$S\supset Q$ for some $Q\in J$}}}}
\mu(S)\,\EE_\mu(S).\nonumber
\end{align}
By Fubini, we get
\begin{align*}
\sum_{{\substack{S\in\tree(R)\setminus\sss(R)\\
\text{$S\supset Q$ for some $Q\in J$}}}}\mu(S)\,\EE_\mu(S) & = \sum_{{\substack{S\in\tree(R)\setminus\sss(R)\\
\text{$S\supset Q$ for some $Q\in J$}}}}\,\sum_{Q\in J:Q\subset S}\mu(Q)\,\EE_\mu(S)\\
 & = \sum_{Q\in J}\mu(Q)\,\sum_{\substack{S\in\tree(R)\setminus\sss(R)\\
 Q\subset S\subset R}}\EE_\mu(S).
 \end{align*}
Note now that for any $Q\in J$,
$$\sum_{\substack{S\in\tree(R)\setminus\sss(R)\\
 Q\subset S\subset R}}\EE_\mu(S)\leq \ve\,\Theta_\mu(2B_R),$$
 because of the stopping condition involving the cubes from $\BCE(R)$.
Therefore,
$$\sum_{{\substack{S\in\tree(R)\setminus\sss(R)\\
\text{$S\supset Q$ for some $Q\in J$}}}}
\mu(S)\,\EE_\mu(S)\leq \ve\,\Theta_\mu(2B_R)\sum_{Q\in J}\mu(Q)\leq \ve\, \Theta_\mu(2B_R)\,\mu(R).$$
 From \rf{eqj33}
and the preceding estimate, the lemma follows.
\end{proof}

\vv

Denote
\[
G_R = R\setminus \bigcup_{Q\in\sss(R)} Q
\qquad\text{and}\qquad
\wt G_R = \bigcap_{k=1}^\infty \bigcup_{\substack{Q \in \tree(R)\\ \ell(Q) \leq A_0^{-k}}} 2M  B_Q
.
\]
and observe that $G_R \subset \wt G_R$. As an immediate consequence of the preceding lemma, we can show that $\wt G_R$ is contained in a Lipschitz graph.

\begin{lemma}\label{lemma:lip-gR-gR}\label{lemgr}
For all $x, y \in \wt G_R$, we have $x - y \not\in K^{1/2}$. Hence, $\wt G_R$ is contained in an $n$-dimensional Lipschitz graph with Lipschitz constant depending only on $s$.
\end{lemma}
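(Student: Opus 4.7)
The plan is to prove the cone-exclusion statement $x-y\notin K^{1/2}$ by contradiction, directly invoking Lemma \ref{keylemma}(a); the Lipschitz graph conclusion will then follow from a short piece of linear algebra. So suppose there exist $x,y\in\wt G_R$ with $x\neq y$ and $x-y\in K^{1/2}$, and set $r:=|x-y|$. Since $K^{1/2}=K(V_0,s/2)$ is defined by a strict inequality, it is open, so we may pick some $\sigma<s/2$ with $\dist(x-y,V_0)<\sigma\,r$. The aim is to produce cubes $Q,Q'\in\tree(R)$ which verify the hypotheses of Lemma \ref{keylemma}(a), since that lemma would then force $Q'\notin\tree(R)$---a contradiction.

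To produce the cubes, I would choose $k$ so large that $A_0^{-k}\ll(s/2-\sigma)\,r/M$, and use the definition of $\wt G_R$ to find $Q,Q'\in\tree(R)$ with $\ell(Q),\ell(Q')\leq A_0^{-k}$, $x\in 2MB_Q$ and $y\in 2MB_{Q'}$. Since the David--Mattila centers satisfy $x_Q\in Q\subset 2B_Q$ and $x_{Q'}\in Q'$, I would verify the key intersection hypothesis $Q'\cap(K_Q^{1/2}\setminus MB_Q)\neq\varnothing$ by showing that $x_{Q'}$ itself lies in $K_Q^{1/2}\setminus MB_Q$. For this I decompose
\[
x_{Q'}-x_Q=(x_{Q'}-y)+(y-x)+(x-x_Q),
\]
in which the first and third summands have norm at most $56M\,r(Q')$ and $56M\,r(Q)$ respectively, while the middle summand lies in $K(V_0,\sigma)$. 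A triangle-inequality computation then gives, thanks to the choice of $k$, the bound $\dist(x_{Q'}-x_Q,V_0)<(s/2)\,|x_{Q'}-x_Q|$, i.e.\ $x_{Q'}\in x_Q+K(V_0,s/2)\subset K_Q^{1/2}$. The same smallness of $A_0^{-k}$ forces $|x_{Q'}-x_Q|\geq r/2$, which ensures both $x_{Q'}\notin MB_Q$ and $\ell(Q')\leq\frac{1}{M}\dist(Q',Q)$. Applying Lemma \ref{keylemma}(a) then yields $Q'\notin\tree(R)$, the desired contradiction.

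Once $x-y\notin K^{1/2}$ is established for all $x,y\in\wt G_R$, the Lipschitz graph statement follows easily. The exclusion reads as $|P_{V_0^\perp}(x-y)|=\dist(x-y,V_0)\geq (s/2)|x-y|$, and combining this with $|x-y|^2=|P_{V_0}(x-y)|^2+|P_{V_0^\perp}(x-y)|^2$ gives $|P_{V_0}(x-y)|\leq\frac{\sqrt{4-s^2}}{s}\,|P_{V_0^\perp}(x-y)|$. Hence $P_{V_0^\perp}$ is injective on $\wt G_R$ with a Lipschitz inverse whose constant depends only on $s$; a Kirszbraun extension of this inverse map produces an $n$-dimensional Lipschitz graph over $V_0^\perp$ containing $\wt G_R$, with Lipschitz constant depending only on $s$. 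The main obstacle I anticipate is the perturbation bookkeeping in the cone condition: because $x$ only lies in $2MB_Q$ (not $2B_Q$), one must absorb an $O(M\,r(Q))$ error when comparing $x_{Q'}-x_Q$ with $y-x$, and it is precisely the openness of $K^{1/2}$---encoded by the existence of $\sigma<s/2$---that provides the slack needed to do so.
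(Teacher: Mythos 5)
Your argument is correct and follows essentially the same strategy as the paper: the paper also argues by contradiction, selects $Q,Q'\in\tree(R)$ with $x\in 2MB_Q$, $y\in 2MB_{Q'}$, and $\ell(Q),\ell(Q')$ small, verifies the hypotheses of Lemma~\ref{keylemma}(a), and invokes it to reach a contradiction. You merely make explicit the perturbation estimates (choice of $\sigma<s/2$ and the triangle-inequality bookkeeping) and the linear-algebra step for the Lipschitz graph that the paper leaves implicit.
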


\begin{proof}
 Suppose for contradiction that $x, y \in \wt G_R$ and $x-y \in K^{1/2}$. Let $Q,Q' \in  \tree(R)$ be such that $x \in 2M  B_Q$ and $y \in 2M  B_{Q'}$, with $\ell(Q), \ell(Q')$ so small that $Q' \cap (K_Q^{1/2} \setminus MB_Q) \neq \varnothing$ and $\ell(Q') \leq \frac{1}{M} \dist(Q', Q)$. By Lemma~\ref{keylemma}(a), it follows that $Q' \not\in \tree(R)$, which is a contradiction.
\end{proof}

\vv

\subsection{An algorithm to construct a Lipschitz graph close to the stopping cubes}\label{sec4.4}

Given $t>1$, we say that two cubes $Q,Q'\subset\DD_\mu$ are $t$-neighbors if
\begin{equation}\label{eqnei1}
t^{-1}\,\ell(Q')\leq\ell(Q)\leq t\,\ell(Q')
\end{equation}
and
\begin{equation}\label{eqnei2}
\dist(Q,Q')\leq t(\ell(Q) + \ell(Q')).
\end{equation}
We say that a family of cubes is $t$-separated if there is not any pair of cubes in this family which are $t$-neighbors.

Given a big constant $t>M$ to be fixed below, we denote by $\sep(R)$ a maximal $t$-separated subfamily of 
$\sss(R)$. It is easy to check that such subfamily exists.
Next, we define
\[
\wt\sep(R)
=
\{
Q \in \sep(R) : 2MB_Q \cap \wt G_R = \varnothing \text{ and } \nexists Q' \in \sep(R) \text{ such that } 2MB_{Q'} \subset 2MB_Q
\}.
\]

\begin{lemma}\label{lemma:sep-Q-not-in-Q}
Suppose $t$ is sufficiently large (depending on $M$). Then for all $Q, Q' \in \wt\sep(R)$, we have $Q' \not\subset MB_Q$.
\end{lemma}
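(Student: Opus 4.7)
The plan is to argue by contradiction: suppose there exist distinct $Q, Q' \in \wt\sep(R)$ with $Q' \subset MB_Q$, and derive a contradiction by splitting into two cases according to the relative sizes of the two cubes. The dichotomy is designed so that one case contradicts the $t$-separated property of $\sep(R)$ while the other contradicts the second defining clause of $\wt\sep(R)$.

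First I would handle the case $\ell(Q') \geq t^{-1}\ell(Q)$. The containment $Q' \subset MB_Q$, together with the fact that $r(P) \approx \ell(P)$ uniformly in the David-Mattila lattice, forces $\ell(Q') \lesssim M \ell(Q)$. Combined with the hypothesis of this case, this bounds $\ell(Q)/\ell(Q')$ above and below by constants that depend only on $M$ (and on $C_0, A_0$). Similarly, since $Q \subset B_Q \subset MB_Q$ and $Q' \subset MB_Q$, we get $\dist(Q, Q') \leq \diam(MB_Q) \lesssim M \ell(Q)$. Both conditions \rf{eqnei1} and \rf{eqnei2} for being $t$-neighbors are then satisfied provided $t$ is chosen large enough relative to $M$, which contradicts the $t$-separated condition defining $\sep(R)$.

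In the second case, $\ell(Q') < t^{-1}\ell(Q)$. From $Q' \subset MB_Q$ we get $|x_{Q'} - x_Q| \leq M r(Q)$, so any $y \in 2MB_{Q'}$ satisfies
$$|y - x_Q| \leq 2Mr(Q') + Mr(Q).$$
The smallness of $\ell(Q')$ ensures $2Mr(Q') \leq Mr(Q)$ once $t$ is large enough (depending on $C_0$), and hence $2MB_{Q'} \subset 2MB_Q$. Since $\ell(Q') < \ell(Q)$ forces $Q \neq Q'$, this contradicts the second clause in the definition of $\wt\sep(R)$ applied to $Q$.

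No real obstacle is anticipated: the argument is essentially bookkeeping, and the main thing to keep straight is the hierarchy of constants, namely that the threshold for $t$ is chosen after $M$ (which itself depends only on $s$) and may implicitly depend on the David-Mattila parameters $C_0, A_0$. The one delicate point is verifying that the case $\ell(Q') \geq t^{-1}\ell(Q)$ genuinely produces a pair of $t$-neighbors with the same $t$ used to define $\sep(R)$; this is why both directions of the size comparison $t^{-1}\ell(Q') \leq \ell(Q) \leq t\ell(Q')$ must be checked, with one direction coming from the hypothesis and the other from the geometry of the containment $Q' \subset MB_Q$.
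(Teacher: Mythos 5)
Your proof is correct and follows essentially the same two-case argument as the paper: split on whether $\ell(Q')$ is at least $t^{-1}\ell(Q)$ or smaller, derive a $t$-neighbor contradiction in the first case, and derive the ball containment $2MB_{Q'} \subset 2MB_Q$ (violating the second clause defining $\wt\sep(R)$) in the second. The only difference is that you spell out the verification of both directions of the size-ratio condition \rf{eqnei1} and the distance condition \rf{eqnei2}, which the paper leaves to the phrase ``if $t$ is sufficiently large''; this is sound bookkeeping, not a new idea.
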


\begin{proof}
Suppose $Q' \in \wt\sep(R)$ and $Q' \subset MB_Q$. We will show $Q \not\in \wt\sep(R)$. If $\ell(Q') > t^{-1} \ell(Q)$, then $Q' \subset MB_Q$ implies that $Q, Q'$ are $t$-neighbors if $t$ is sufficiently large. On the other hand, if $\ell(Q') \leq t^{-1} \ell(Q)$, then $Q' \subset MB_Q$ implies that $2MB_{Q'} \subset 2M B_Q$ if $t$ is sufficiently large. Hence, in both cases, we have $Q \not\in \wt\sep(R)$.
\end{proof}

\begin{lemma}\label{lemma:stop-sep-wtsep}\label{lemfac1}
The following holds:
\begin{itemize}
\item[(a)]
For each $Q\in\sss(R)$ there exists some cube $Q' \in\sep(R)$ which is $t$-neighbor of $Q$.
\item[(b)]
For each $Q\in\sep(R)$, at least one of the following is true:
\begin{itemize}
\item $2M B_Q \cap \wt G_R \neq \varnothing$.
\item There exists some $P\in \wt\sep(R)$ such that $P\subset 2M B_Q$.
\end{itemize}
\end{itemize}
\end{lemma}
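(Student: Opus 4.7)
The plan is to deduce (a) directly from the maximality of $\sep(R)$, and to prove (b) by an iterative descent within $\sep(R)$ whose termination is forced by a Cantor-type nested intersection argument.

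For (a), I would first observe that if $Q \in \sep(R)$ then $Q' := Q$ trivially satisfies the $t$-neighbor relations \rf{eqnei1}--\rf{eqnei2} with itself. Otherwise $Q \in \sss(R) \setminus \sep(R)$, and maximality of $\sep(R)$ as a $t$-separated subfamily of $\sss(R)$ forces the family $\sep(R) \cup \{Q\}$ to violate the $t$-separation condition, which immediately yields the desired $Q' \in \sep(R)$ that is a $t$-neighbor of $Q$.

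For (b), I would fix $Q \in \sep(R)$ and assume $2MB_Q \cap \wt G_R = \varnothing$, since otherwise the first alternative already holds. The strategy is to construct a finite descending chain $Q = Q_0, Q_1, \ldots, Q_n$ of distinct cubes in $\sep(R)$ with $2MB_{Q_{i+1}} \subset 2MB_{Q_i}$ for each $i$, terminating at $P := Q_n \in \wt\sep(R)$. Given such $Q_i$, if it already lies in $\wt\sep(R)$ I would stop; otherwise, because $2MB_{Q_i} \subset 2MB_Q$ inherits the property $2MB_{Q_i} \cap \wt G_R = \varnothing$, the only way $Q_i$ can fail to lie in $\wt\sep(R)$ is via the second defining clause, which produces some $Q_{i+1} \in \sep(R)$ distinct from $Q_i$ with $2MB_{Q_{i+1}} \subset 2MB_{Q_i}$. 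The chain then delivers $P = Q_n$ satisfying $P \subset B_{Q_n} \subset 2MB_{Q_n} \subset \dots \subset 2MB_Q$, as required.

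The main obstacle I expect is showing that this iteration terminates in finitely many steps. The key observation is that $2MB_{Q_{i+1}} \subset 2MB_{Q_i}$ forces $\dist(Q_i, Q_{i+1}) \leq |x_{Q_{i+1}} - x_{Q_i}| \leq 56M\,(r(Q_i) - r(Q_{i+1})) \leq M\,\ell(Q_i)$, using $r(Q_i) \leq \ell(Q_i)/56$. For $t$ chosen sufficiently large relative to $M$ and $C_0$, the distance condition \rf{eqnei2} for $Q_i, Q_{i+1}$ being $t$-neighbors is thus satisfied, so the $t$-separation of $\sep(R)$ (together with $Q_i \neq Q_{i+1}$) forces the size condition \rf{eqnei1} to fail; combined with $\ell(Q_{i+1}) \leq C_0 \ell(Q_i)$ (a consequence of $r(Q_{i+1}) \leq r(Q_i)$), this yields the geometric decay $\ell(Q_{i+1}) < \ell(Q_i)/t$. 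If the iteration never stopped, the compact sets $\overline{2MB_{Q_i}}$ would form a decreasing sequence of diameter tending to zero, so Cantor's nested intersection theorem would supply a unique point $x^* \in \bigcap_i \overline{2MB_{Q_i}} \subset \overline{2MB_Q}$; since each $Q_i \in \sep(R) \subset \tree(R)$ with $\ell(Q_i) \to 0$, one would obtain $x^* \in \wt G_R$ (modulo a harmless open-versus-closed ball technicality that can be dispatched by a minor enlargement of $M$ in the iteration, or by noting that the rigid boundary configuration needed for failure would force all the centers $x_{Q_j}$ to lie on a single ray from $x^*$, which is easily excluded by choosing the descending cube more carefully). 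Combined with $x^* \in 2MB_Q$, this contradicts $2MB_Q \cap \wt G_R = \varnothing$ and completes the proof.
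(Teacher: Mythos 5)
Your argument coincides with the paper's in both parts: maximality of $\sep(R)$ for (a), and for (b) the same descending iteration $Q=Q_0,Q_1,\ldots$ in $\sep(R)$, with the geometric decay $\ell(Q_{i+1})\leq t^{-1}\ell(Q_i)$ extracted --- as the paper leaves implicit --- from $t$-separation together with the nesting $2MB_{Q_{i+1}}\subset 2MB_{Q_i}$ (your estimate $\dist(Q_i,Q_{i+1})\leq |x_{Q_{i+1}}-x_{Q_i}|$ is legitimate because $x_Q\in W\cap B(Q)\subset Q$).

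The open-versus-closed technicality you flag is real: the balls in this paper are open, so the paper's bare assertion that $\bigcap_j 2MB_{Q_j}\neq\varnothing$ is not automatic (nested open balls tangent at a common boundary point, exactly the configuration you describe, have empty intersection). Your two proposed patches are a bit vague though --- enlarging $M$ ``in the iteration'' clashes with the fixed role of $M$ in the definitions of $\wt\sep(R)$ and $\wt G_R$, and ``choosing the descending cube more carefully'' is not an available move since $Q_{i+1}$ is produced, not chosen. A cleaner repair: the Cantor limit $x^*$ lies in $\overline{2MB_{Q_j}}$ for all $j$, so $|x^*-x_{\widehat{Q_j}}|\leq 56Mr(Q_j)+28\,r(\widehat{Q_j})$ for the parent $\widehat{Q_j}$; since $r(Q_j)\leq (C_0/A_0)\,r(\widehat{Q_j})$ and $A_0\gg C_0$, this is $<56M\,r(\widehat{Q_j})$, i.e.\ $x^*\in 2MB_{\widehat{Q_j}}$. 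As each $\widehat{Q_j}\in\tree(R)$ (a stopping cube's parent cannot lie strictly inside another stopping cube) and $\ell(\widehat{Q_j})\to 0$, this puts $x^*\in\wt G_R\cap 2MB_Q$, giving the same contradiction.
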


\begin{proof}
The first statement is obvious from the maximality of the separated family $\sep(R)$.
For the second one, note first that the statement is clearly true if $Q\in\wt\sep(R)$.
If $Q\in\sep(R)\setminus\wt\sep(R)$ and $2MB_Q \cap \wt G_R = \varnothing$, then there exists another cube $Q_1\in\sep(R)$ such that %
$\ell(Q_1)\leq t^{-1}\ell(Q)$ and $2M B_{Q_1}\subset  2M B_{Q}$.

If $Q_1\in\wt\sep(R)$, then we take $P=Q_1$. Otherwise, since $2M B_{Q_1} \cap \wt G_R \subset 2MB_{Q} \cap  \wt G_R  = \varnothing$, there exists some cube $Q_2\in\sep(R)$ such that %
$\ell(Q_2)\leq t^{-1}\ell(Q_1)$ and $2M B_{Q_2}\subset  2M B_{Q_1}$.
Iterating this process, we will get a sequence of cubes
$Q\equiv Q_0,Q_1,\ldots Q_m$ such that $\ell(Q_j)\leq t^{-1}\ell(Q_{j-1})$ and
$ 2M B_{Q_j}\subset  2M B_{Q_{j-1}}$,
for $j=1,\ldots,m$.

If the process does not terminate, then $\bigcap_{j=0}^\infty  2M B_{Q_j}$ is nonempty. By definition of $\wt G_R$, we have $\bigcap_{j=0}^\infty  2M B_{Q_j} \subset \wt G_R$, which contradicts our assumption that $2MB_Q \cap \wt G_R =\varnothing$. Hence, this process terminates at some $Q_m$ (i.e., $Q_m\in\wt\sep(R)$). We take $P=Q_m$, and obtain $P\subset  2MB_P \subset 2M B_Q$.
\end{proof}

\begin{lemma}\label{lemma:lip-sep-sep}
Assume that $t$ is chosen big enough (depending on $M$, but not on $A$, $\tau$, or $\ve$). Then:
\begin{enumerate}
\item[(a)] For all $Q, Q' \in \wt\sep(R)$, we have 
 \begin{equation}\label{eq:Q-cap-KQ-empty}
Q \cap K_{Q'}^{1/2} = Q' \cap K_{Q}^{1/2} = \varnothing.
 \end{equation}
\item[(b)] For all $x \in \wt G_R$ and for all $Q \in \wt\sep(R)$, we have 
 \begin{equation}
x \not\in K_Q^{1/2} \text{ and } Q \cap K_x^{1/2} = \varnothing.
 \end{equation}
\end{enumerate}
\end{lemma}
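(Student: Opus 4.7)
My plan is to prove both parts by contradiction, invoking Lemma~\ref{keylemma}(a). Note that the proof of that lemma uses intermediate cones (going from the hypothesis cone $K^{1/2}$ up through $K^{3/4}$ to $K^1$) internally, so its statement remains valid if the hypothesis cone $K^{1/2}$ is replaced by any $K^\alpha$ with $\alpha<1$, with $M$ adjusted accordingly. I will exploit this flexibility in the perturbation steps below.

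For part~(a), let $Q, Q' \in \wt\sep(R)$ with $Q \neq Q'$, and suppose for contradiction $z - x \in K^{1/2}$ for some $z\in Q$, $x\in 2B_{Q'}$. Since $\sep(R)$ is $t$-separated, at least one of the following holds: (i)~$\ell(Q) < t^{-1}\ell(Q')$; (ii)~$\ell(Q') < t^{-1}\ell(Q)$; (iii)~$\dist(Q,Q') > t(\ell(Q)+\ell(Q'))$. In cases (i) and (iii) I would apply Lemma~\ref{keylemma}(a) with the roles ``$Q$''$=Q'$, ``$Q'$''$=Q$: in (iii) the $t$-separation (with $t\gg M$) directly yields $Q\cap MB_{Q'}=\varnothing$ and $\ell(Q)\leq M^{-1}\dist(Q,Q')$; in (i) the defining condition of $\wt\sep(R)$ forbids $2MB_Q\subsetneq 2MB_{Q'}$, forcing $|x_Q-x_{Q'}| > 56M(r(Q')-r(Q))\gtrsim Mr(Q')$ (since $r(Q)\ll r(Q')$ for $t\gg C_0$), from which both hypotheses follow. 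The conclusion $Q\notin \tree(R)$ contradicts $Q\in \wt\sep(R)\subset \tree(R)$.

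Case (ii) is more delicate because $Q$ is the larger cube, so $\ell(Q)\leq M^{-1}\dist(Q,Q')$ may fail; I must apply the lemma with the reversed roles ``$Q$''$=Q$, ``$Q'$''$=Q'$. The $\wt\sep$ property now forbids $2MB_{Q'}\subsetneq 2MB_Q$, giving $|x_Q-x_{Q'}|\gtrsim Mr(Q)$, so $Q'\cap MB_Q=\varnothing$ and $\ell(Q')\leq M^{-1}\dist(Q',Q)$. What is still missing is a point of $Q'$ itself (not merely of $2B_{Q'}$) in the cone; for this I would use the center $x_{Q'}\in Q'$. Since $|x_{Q'}-x|\lesssim r(Q')$ while $|x-z|\gtrsim Mr(Q)$, the linearization
\[
\frac{\dist(x_{Q'}-z,\,V_0)}{|x_{Q'}-z|} \;\leq\; \frac{s/2+|x_{Q'}-x|/|x-z|}{1-|x_{Q'}-x|/|x-z|}
\]
shows $x_{Q'}-z\in K^{3/4}$ once $tM$ is sufficiently large depending on $s$. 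Applying the $K^{3/4}$-version of Lemma~\ref{keylemma}(a) gives $Q'\notin \tree(R)$, a contradiction.

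For part~(b), assume $x-y\in K^{1/2}$ with $x\in\wt G_R$ and $y\in 2B_Q$. Since $2MB_Q\cap\wt G_R=\varnothing$, $|x-y|\gtrsim Mr(Q)$. By the definition of $\wt G_R$ I pick $Q_k\in \tree(R)$ with $x\in 2MB_{Q_k}$ and $r(Q_k)\to 0$; for $z\in Q_k$, $|z-x|/|x-y|\lesssim r(Q_k)/r(Q)\to 0$, so the same linearization yields $Q_k\subset K_Q^{3/4}$. Combined with $Q_k\cap MB_Q=\varnothing$ and $\ell(Q_k)\leq M^{-1}\dist(Q_k,Q)$, the $K^{3/4}$-version of Lemma~\ref{keylemma}(a) forces $Q_k\notin \tree(R)$, a contradiction. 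The second claim $Q\cap K_x^{1/2}=\varnothing$ follows from the first by the symmetry of $K^{1/2}$ about the origin (i.e.\ $w\in K_x^{1/2}\iff x\in K_w^{1/2}$): applying the first claim to every $w\in 2B_Q\supset Q$ gives $2B_Q\cap K_x^{1/2}=\varnothing$. The main difficulty throughout is the perturbation step, since a point strictly inside the open cone $K^{1/2}$ may be arbitrarily close to its boundary, so aperture $s/2$ cannot be preserved after shifting by an amount of order $r(Q')$ or $r(Q_k)$; this is precisely why I enlarge to $K^{3/4}$ and rely on the cone-aperture flexibility of Lemma~\ref{keylemma}(a).
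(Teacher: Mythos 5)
Your proposal is correct and follows essentially the same route as the paper: derive a contradiction from Lemma~\ref{keylemma}(a) by exhibiting a tree cube lying in a conical region emanating from the other cube, after using the $t$-separation and the $\wt\sep(R)$-definition to place the relevant cubes well outside $MB_{(\cdot)}$. You are somewhat more careful than the paper on one point: the paper asserts without justification that ``$Q\cap K_{Q'}^{1/2}\neq\varnothing$'' and ``$Q'\cap K_Q^{1/2}\neq\varnothing$'' are simultaneous, and in part (b) directly applies Lemma~\ref{keylemma}(a) while technically only having a point of $2MB_{Q'}$ (rather than $Q'$) inside $K_Q^{1/2}$; your explicit widening from $K^{1/2}$ to $K^{3/4}$, justified by the internal slack in the proof of Lemma~\ref{keylemma}(a), handles this cleanly.
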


\begin{proof}[Proof of (a)]
Suppose that \eqref{eq:Q-cap-KQ-empty} fails. Observe that this implies that both $Q \cap K_{Q'}^{1/2}$ and $Q' \cap K_{Q}^{1/2}$ are nonempty. Suppose that $\ell(Q)\leq \ell(Q')$.  
 Since $Q$ and $Q'$ are not $t$-neighbors, then we must have $\ell(Q')\leq t^{-1}\,\ell(Q)$.

We claim that $Q' \subset MB_{Q}$. Suppose not. Then $Q' \cap (K_{Q}^{1/2} \setminus MB_{Q}) \neq \varnothing$, and $\ell(Q')\leq t^{-1}\,\ell(Q) \leq \frac{1}{M} \dist(Q, Q')$. Hence it follows that $Q' \not\in \tree(R)$, a contradiction. This shows that $Q' \subset MB_{Q}$. But that contradicts Lemma~\ref{lemma:sep-Q-not-in-Q}.
\end{proof}

\begin{proof}[Proof of (b)]
Suppose for contradiction that $x \in K_Q^{1/2}$. We claim that $x \in MB_Q$. Suppose not, so that $x \in K_{Q}^{1/2}  \setminus MB_{Q}$. Let $Q'\in \tree(R)$ be such that $x \in 2M B_{Q'}$ with $\ell(Q')$ so small that $\ell(Q') \leq \frac{1}{M} \dist(Q', Q)$. By this inequality and the fact that $Q' \cap (K_Q^{1/2} \setminus MB_Q) \neq \varnothing$, it follows from Lemma~\ref{keylemma}(a) that $Q' \not\in \tree(R)$, which is a contradiction. Hence $x \in MB_Q$, so $2MB_Q \cap \wt G_R \neq \varnothing$. But this contradicts $Q \in \wt\sep(R)$.
\end{proof}

\begin{lemma}\label{lemc0q}
Let $\Lambda_0>0$ be big enough, depending on $M$ and $t$.
There is a Lipschitz graph $\Gamma_R$ with slope depending only on $s$ such that $\widetilde G_R \subset \Gamma_R$ and $\Lambda_0 B_Q \cap \Gamma_R \neq \varnothing$ for every $Q \in \tree(R)$.
\end{lemma}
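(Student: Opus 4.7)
My plan is to enlarge $\wt G_R$ by adjoining one point per cube of $\wt\sep(R)$, check that the resulting set satisfies a uniform cone condition over $V_0^\bot$, and then extend it to a global Lipschitz graph by Kirszbraun. The intersection property will follow from a short case analysis based on Lemma~\ref{lemfac1}. Concretely, I set
\[
E_R := \wt G_R \cup \{x_Q : Q\in \wt\sep(R)\}.
\]
The key claim is that any two distinct $x,y\in E_R$ satisfy $x-y\notin K^{1/2}$ (a condition symmetric in $x,y$, since $K^{1/2}$ is centrally symmetric about the origin, so I may check it for either ordering). If $x,y\in \wt G_R$, this is Lemma~\ref{lemma:lip-gR-gR}. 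If $x=x_Q$ with $Q\in \wt\sep(R)$ and $y\in \wt G_R$, then $x\in Q$ and Lemma~\ref{lemma:lip-sep-sep}(b) gives $Q\cap K_y^{1/2}=\varnothing$, hence $x\notin K_y^{1/2}$, i.e.\ $x-y\notin K^{1/2}$. If $x=x_Q$ and $y=x_{Q'}$ with $Q\neq Q'$ both in $\wt\sep(R)$, Lemma~\ref{lemma:lip-sep-sep}(a) applied to $x\in Q$ and $y\in Q'\subset 2B_{Q'}$ gives $x\notin K_{Q'}^{1/2}$, so again $x-y\notin K^{1/2}$.

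Because $x-y\notin K^{1/2}$ is equivalent to $|P_{V_0}(x-y)|\leq (2/s)\,|P_{V_0^\bot}(x-y)|$, the projection $P_{V_0^\bot}|_{E_R}$ is injective and induces a well-defined $(2/s)$-Lipschitz map $f\colon P_{V_0^\bot}(E_R)\to V_0$ via $f(P_{V_0^\bot}(x))=P_{V_0}(x)$. I then extend $f$ to a $(2/s)$-Lipschitz map $\wt f\colon V_0^\bot\to V_0$ using Kirszbraun, and take $\Gamma_R := \{u+\wt f(u):u\in V_0^\bot\}$. This is an $n$-dimensional Lipschitz graph whose slope depends only on $s$, and by construction $\wt G_R\subset E_R\subset \Gamma_R$.

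To verify $\Lambda_0 B_Q\cap \Gamma_R\neq \varnothing$ for every $Q\in \tree(R)$, I split into two cases. If $Q\cap G_R\neq\varnothing$, the inclusion $G_R\subset \wt G_R\subset \Gamma_R$ immediately places a point of $\Gamma_R$ inside $Q\subset \Lambda_0 B_Q$ (for any $\Lambda_0\geq 1$). Otherwise $Q\cap G_R=\varnothing$; since $\mu(Q)>0$ and $Q$ is not strictly contained in any cube of $\sss(R)$, the partitioning property of $\DD_\mu$ forces $Q$ to contain (or equal) some stopping cube $Q''\in\sss(R)$. Lemma~\ref{lemfac1}(a) supplies a $t$-neighbor $Q'''\in \sep(R)$ of $Q''$, and Lemma~\ref{lemfac1}(b) then either places a point of $\wt G_R$ in $2MB_{Q'''}$ or produces a cube $P\in \wt\sep(R)$ with $P\subset 2MB_{Q'''}$ (so that $x_P\in E_R$). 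Either way one obtains $z\in \Gamma_R\cap 2MB_{Q'''}$, and the $t$-neighbor size/location bounds combined with $Q''\subset Q$ give $\dist(z,Q)\lesssim_{M,t} \ell(Q)$. Hence $z\in \Lambda_0 B_Q$ once $\Lambda_0$ is chosen large enough in terms of $M$ and $t$.

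The one point requiring real care is the first step: one must verify, using \emph{both} parts of Lemma~\ref{lemma:lip-sep-sep} together with the central symmetry of $K^{1/2}$, that $E_R$ actually satisfies the single-sided cone condition $x-y\notin K^{1/2}$, so that $P_{V_0^\bot}|_{E_R}$ is injective and $f$ is well-defined as a Lipschitz function. Once this is in hand, Kirszbraun and the two-case analysis of the intersection property proceed routinely.
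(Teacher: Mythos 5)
Your proof is correct and follows essentially the same route as the paper: form the set $F=\wt G_R\cup\{z_Q:Q\in\wt\sep(R)\}$ (you take $z_Q=x_Q$, which is fine since $x_Q\in W\cap B(Q)\subset Q$), verify the cone separation $x-y\notin K^{1/2}$ via Lemma~\ref{lemma:lip-gR-gR} and both parts of Lemma~\ref{lemma:lip-sep-sep}, conclude $F$ lies on a Lipschitz graph over $V_0^\perp$, and then use Lemma~\ref{lemfac1}(a) and (b) for the intersection property; the paper simply leaves the Kirszbraun extension implicit and phrases the dichotomy with $\wt G_R$ rather than $G_R$, which is an immaterial difference since $G_R\subset\wt G_R$.
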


\begin{proof}
For each $Q \in \wt\sep(R)$, pick a point $z_Q \in Q$. Let $F = \{z_Q : Q \in \wt\sep(R)\} \cup \wt G_R$. By Lemma \ref{lemma:lip-gR-gR} and Lemma \ref{lemma:lip-sep-sep}, it follows that
\[
x-y \not\in K^{1/2}
\text{ for all } x,y \in F.
\]
Hence there is a Lipschitz graph $\Gamma_R$ containing
$
F.
$

By Lemma \ref{lemma:stop-sep-wtsep}, any cube $P\in \sss(R)$ is $t$-neighbor of some cube $P'\in\sep(R)$ and $2M B_{P'}\cap F\neq\varnothing$. So if $Q\in\tree(R)$, then either $Q$ intersects $\wt G_R$ or $Q$ contains some cube $P\in\sss(R)$ and thus
$$\dist(Q,\Gamma_R)\leq \dist(P,\Gamma_R)\leq C(t)\ell(P)\leq C(t)\,\ell(Q).$$
which implies $\Lambda_0 B_Q \cap \Gamma_R \neq \varnothing$.
\end{proof}

\vv

\subsection{The small measure of the low density set}

The goal in this section is to estimate the total measure of the cubes in the low density set.

\begin{lemma}\label{lemld*}
We have
$$\sum_{Q\in\LD(R)}\mu(Q) \leq c\,(\tau+C(A,\tau,M)\ve)\,\mu(R).$$
\end{lemma}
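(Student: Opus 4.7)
The starting point is the defining low density condition: for each $Q \in \LD(R)$, combining $\Theta_\mu(2B_Q) \le \tau\,\Theta_\mu(2B_R)$ with $\mu(Q) \le \mu(2B_Q) \lesssim \Theta_\mu(2B_Q)\,\ell(Q)^n$ gives $\mu(Q) \lesssim \tau\,\Theta_\mu(2B_R)\,\ell(Q)^n$. Because $R \in \DD_\mu^{db}$, $\mu(R) \approx \Theta_\mu(2B_R)\,\ell(R)^n$, so the conclusion reduces to establishing
\[
\sum_{Q \in \LD(R)} \ell(Q)^n \;\lesssim\; \ell(R)^n \,+\, C(A,\tau,M)\,\frac{\ve}{\tau}\,\ell(R)^n.
\]

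I plan to split $\LD(R) = \LD^{\rm cl}(R) \sqcup \LD^{\rm fr}(R)$ according to proximity to the Lipschitz graph $\Gamma_R$ of Lemma \ref{lemc0q}. By Lemma \ref{lemfac1}(a), every $Q \in \LD(R)$ admits a $t$-neighbor $Q^* \in \sep(R)$ at comparable scale, and by Lemma \ref{lemfac1}(b) this $Q^*$ falls into either case (a) (where $2MB_{Q^*} \cap \wt G_R \ne \varnothing$, placing $Q$ in $\LD^{\rm cl}(R)$) or case (b) (where $Q^*$ admits a witness $P \in \wt\sep(R)$ with $P \subset 2MB_{Q^*}$, placing $Q$ in $\LD^{\rm fr}(R)$). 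The choice of dichotomy is natural because the cubes of $\wt\sep(R)$ have pairwise disjoint $V_0^\bot$-projections: if $P,P' \in \wt\sep(R)$ had $\pi_{V_0^\bot}(P) \cap \pi_{V_0^\bot}(P') \ne \varnothing$, then $\exists x \in P$, $x' \in P'$ with $x-x' \in V_0 \setminus\{0\} \subset K^{1/2}$, putting $x \in P \cap K_{P'}^{1/2}$ and contradicting Lemma \ref{lemma:lip-sep-sep}(a).

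For $Q \in \LD^{\rm fr}(R)$: the associated witness $P \in \wt\sep(R)$ satisfies $\ell(P) \lesssim \ell(Q)$. A short geometric argument -- using $V_0 \subset K^{1/2}$ and Lemma \ref{keylemma}(a) -- places $Q \subset K_P^{1/2} \cap (R \setminus MB_P)$. After a constant-multiplicity reduction (at most $O_{M,t}(1)$ many far cubes $Q$ can share a given witness $P$), the family $J$ of witnesses forms a disjoint subfamily of $\tree(R)$, and Lemma \ref{keylemma}(b) yields
\[
\sum_{Q \in \LD^{\rm fr}(R)} \mu(Q) \;\le\; C_{M,t}\,\mu\Bigl(\bigcup_{P \in J} K_P^{1/2} \cap (R \setminus MB_P)\Bigr) \;\le\; C(A,\tau,M)\,\ve\,\mu(R).
\]

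For $Q \in \LD^{\rm cl}(R)$: each $Q$ sits in some $2MB_{Q^*}$ with $2MB_{Q^*} \cap \wt G_R \ne \varnothing$, so $\dist(Q,\Gamma_R) \lesssim M\,\ell(Q)$. Since $\Gamma_R$ is a Lipschitz $n$-graph with slope depending only on $s$, $\HH^n(\Gamma_R \cap CB_R) \lesssim \ell(R)^n$. I would assign to each $Q$ a fingerprint $F(Q) \subset \Gamma_R \cap cMB_{Q^*}$ of $\HH^n$-mass $\approx \ell(Q)^n$ and prove bounded overlap of the fingerprints, yielding
\[
\sum_{Q \in \LD^{\rm cl}(R)} \ell(Q)^n \;\lesssim\; \HH^n(\Gamma_R \cap CB_R) \;\lesssim\; \ell(R)^n,
\]
and hence $\sum_{Q \in \LD^{\rm cl}(R)} \mu(Q) \lesssim \tau\,\mu(R)$. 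The main obstacle is precisely this bounded-overlap step: a priori, dyadic projections onto $\Gamma_R$ can accumulate infinitely many multi-scale contributions at a single point. The remedy is to exploit the rigidity of Lemma \ref{keylemma}(a) -- which forces that whenever two fingerprints overlap with $\ell(Q_1) \le \ell(Q_2)$ one has either $Q_1 \subset MB_{Q_2}$ or $\ell(Q_1) \gtrsim \dist(Q_1,Q_2)$, in both cases placing $Q_1$ in a bounded dilate of $Q_2$ -- in tandem with the $\sep/\wt\sep$ bookkeeping of Section \ref{sec4.4}, which reduces the count per scale to $O(1)$ and allows a Vitali extraction leveraging the genuinely disjoint $\wt\sep(R)$ projections above.
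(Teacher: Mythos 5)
Your proposal correctly identifies the two ingredients at play -- Lemma~\ref{keylemma}(b) to kill the ``bad'' mass, and a Lipschitz-graph/covering argument to control the rest -- but the dichotomy you choose does not line up with the tools available, and the ``far'' branch contains a genuine error.

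The critical gap is the claim for $Q\in\LD^{\rm fr}(R)$ that ``a short geometric argument\ldots places $Q\subset K_P^{1/2}\cap(R\setminus MB_P)$.'' This does not follow, and in fact Lemma~\ref{keylemma}(a) pushes in the \emph{opposite} direction: with $Q\in\tree(R)$, $P\in\tree(R)$, and $\ell(Q)$ small relative to $\dist(Q,P)$, that lemma forbids $Q$ from even \emph{touching} $K_P^{1/2}\setminus MB_P$. The witness $P$ was produced via Lemma~\ref{lemfac1}(b), which only gives $P\subset 2MB_{Q^*}$ for a $t$-neighbor $Q^*$ of $Q$; this controls position and scale but says nothing about $Q$ lying in the cone over $P$ -- the direction from $P$ to $Q$ can be arbitrary, and in particular transverse to $V_0$. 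So the measure bound via Lemma~\ref{keylemma}(b) has no purchase on $\LD^{\rm fr}(R)$ as you have defined it. (The ``close'' branch is also incomplete: the bounded-overlap statement you flag as ``the main obstacle'' is precisely the content that would need to be proved, and the sketch via Lemma~\ref{keylemma}(a) and the $\sep/\wt\sep$ bookkeeping does not close it.)

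The paper's decomposition is different and is the one that matches the lemmas. It splits $\LD(R)$ along $\wt\sss(R)$, the subfamily of stopping cubes $Q$ \emph{not} entirely contained in $\bigcup_{P\in\sss(R)} K_P^{1/2}\cap(R\setminus MB_P)$. The cubes \emph{outside} $\wt\sss(R)$ sit wholly in that union of cones, so Lemma~\ref{keylemma}(b) bounds their total measure by $C(A,\tau,M)\ve\,\mu(R)$ directly -- no containment in a single cone is needed. For cubes \emph{in} $\wt\sss(R)$, each $Q$ owns a witness \emph{point} $w_Q\in Q$ that avoids the union of cones; after a $t$-separation reduction and a Vitali-type extraction (Theorem~9.31 in \cite{Tolsa-book}), the argument shows these $w_Q$'s lie on a \emph{second} Lipschitz graph $\Gamma_0$ (this is the new construction you are missing), and the $\HH^n$-budget of $\Gamma_0\cup\Gamma_R$ inside $2B_R$, combined with the low-density bound $\mu(2B_Q)\lesssim\tau\,\Theta_\mu(2B_R)\,r(B_Q)^n$, gives the $c\tau\,\mu(R)$ estimate. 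The auxiliary graph $\Gamma_0$ is essential: trying to route the covering argument through $\Gamma_R$ and $\wt\sep(R)$ alone, as you propose, leaves the overlap count unbounded across scales.
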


To prove Lemma~\ref{lemld*},  we will construct an auxiliary $n$-dimensional Lipschitz graph, by arguments quite similar to the ones for
$\Gamma_R$. 
We denote by $\wt \sss(R)$ the subfamily of cubes $Q\in\sss(R)$ such that
$$Q\not\subset \bigcup_{P\in \sss(R)} K_P^{1/2} \cap (R\setminus M\,B_P),$$
so that, by Lemma \ref{keylemma}(b),
\begin{equation}\label{eqstop3}
\mu\biggl(\,\bigcup_{Q\in \sss(R)\setminus \wt\sss(R)} Q\biggr) \leq C(A,\tau,M)\,\ve\,
\mu(R).
\end{equation}

We claim that we can choose a subfamily $\LD_{\sep}(R)\subset \LD(R)\cap\wt\sss(R)$ which is $t$-separated and such that
\begin{equation}\label{eqsum941}
\sum_{Q\in\LD(R)\cap\wt\sss(R)}\mu(Q) \leq C(t)\sum_{Q\in\LD_{\sep}(R)}\mu(Q).
\end{equation}
To this end we argue as follows: let $\LD_1(R)$ be a maximal $t$-separated subfamily of $\LD(R)\cap\wt\sss(R)$. Let 
$\LD_2(R)$ be a maximal $t$-separated subfamily of $\LD(R)\cap\wt\sss(R)\setminus \LD_1(R)$. By induction, let
$\LD_j(R)$ be a maximal $t$-separated subfamily of $\LD(R)\cap\wt\sss(R)\setminus (\LD_1(R)\cup\ldots\cup\LD_{j-1}(R))$.
It turns out that there is bounded number $N_0$ of non-empty families $\LD_j(R)$, with $N_0$ depending on $t$. Indeed, if $Q\in\LD_j(R)$, then $Q$ is a $t$-neighbor of some cubes 
$Q_1\in\LD_1(R)$, $Q_2\in\LD_2(R)$,\ldots, $Q_{j-1}\in\LD_{j-1}(R)$, by the maximality of $\LD_k(R)$ for $k=1,\ldots,j-1$. Since the number
of $t$-neighbors of any cube has some bound depending on $t$, we get $j\leq N_0$.
Now we just let $\LD_{\sep}(R)$ be the family $\LD_j(R)$ for which $\sum_{Q\in\LD_j(R)}\mu(Q)$ is maximal, and then we have
$$
\sum_{Q\in\LD(R)\cap\wt\sss(R)}\mu(Q) \leq N_0\sum_{Q\in\LD_{\sep}(R)}\mu(Q),
$$
which proves our claim.

 Next, we modify the family $\LD_{\sep}(R)$ as follows:
if there are two cubes $Q,Q'\in\LD_\sep(R)$ such that
\begin{equation}\label{eqqq00} 
1.1B_Q\cap 1.1B_{Q'}\neq\varnothing\quad \mbox{and}\quad\ell(Q)<\ell(Q'),
\end{equation}
then we eliminate $Q'$. 
We denote by $\wt\LD(R)$ the resulting family after eliminating all cubes $Q'$ of this type in $\LD_\sep(R)$. We have the following variant of Lemma \ref{lemfac1}(b).

\begin{lemma}
For each $Q\in\LD_\sep(R)$, at least one of the following is true:
\begin{itemize}
\item $1.2 B_Q \cap \wt G_R \neq \varnothing$ 
\item There exists some $P\in \wt\LD(R)$ such that $P\subset 1.2 B_Q$.
\end{itemize}
\end{lemma}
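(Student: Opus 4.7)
The plan is to adapt the proof of Lemma~\ref{lemfac1}(b), replacing the enlargement $2MB_Q$ with $1.2B_Q$ and using the elimination rule that defines $\wt\LD(R)$ from $\LD_\sep(R)$. If $Q\in\wt\LD(R)$, then $P=Q\subset B_Q\subset 1.2B_Q$ trivially, so I may assume $Q\in \LD_\sep(R)\setminus\wt\LD(R)$ and that the first alternative $1.2B_Q\cap\wt G_R\neq\varnothing$ fails; under these assumptions I will produce the desired $P$.

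By the elimination rule that produces $\wt\LD(R)$, there exists $Q_1\in\LD_\sep(R)$ with $\ell(Q_1)<\ell(Q)$ and $1.1B_Q\cap 1.1B_{Q_1}\neq\varnothing$. The latter gives $\dist(Q,Q_1)\leq 2.1(r(B_Q)+r(B_{Q_1}))$, which is at most $t(\ell(Q)+\ell(Q_1))$ for $t$ large, so the distance condition~\rf{eqnei2} for being $t$-neighbors is satisfied; as $Q,Q_1\in\LD_\sep(R)$ are not $t$-neighbors, the size condition~\rf{eqnei1} must fail, forcing $\ell(Q_1)\leq t^{-1}\ell(Q)$. If $Q_1\in\wt\LD(R)$ I set $P=Q_1$; otherwise I iterate, producing a sequence $Q=Q_0,Q_1,Q_2,\ldots$ in $\LD_\sep(R)$ with $\ell(Q_{i+1})\leq t^{-1}\ell(Q_i)$ and $1.1B_{Q_i}\cap 1.1B_{Q_{i+1}}\neq\varnothing$ for each $i$.

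Two verifications remain. If the iteration terminates at some $Q_m\in\wt\LD(R)$, I take $P=Q_m$ and check $P\subset 1.2B_Q$ by telescoping: the intersection condition gives $|x_{Q_i}-x_{Q_{i+1}}|\leq 1.1(r(B_{Q_i})+r(B_{Q_{i+1}}))$, and using $r(B_{Q_i})\lesssim t^{-i}r(B_Q)$,
$$|y-x_Q|\leq r(B_{Q_m})+\sum_{i=0}^{m-1}1.1(r(B_{Q_i})+r(B_{Q_{i+1}}))\leq 1.1\,r(B_Q)+O(t^{-1})\,r(B_Q)\leq 1.2\,r(B_Q)$$
for any $y\in Q_m$ and $t$ sufficiently large. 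If the iteration never terminates, then by the same telescoping the centers $x_{Q_i}$ form a Cauchy sequence with limit $p$ satisfying $|p-x_Q|\leq 1.2\,r(B_Q)$ and $|p-x_{Q_i}|\lesssim r(B_{Q_i})\leq 2M\,r(B_{Q_i})$ for every $i$. Since each $Q_i\in\sss(R)\subset\tree(R)$ and $\ell(Q_i)\to 0$, the definition of $\wt G_R$ forces $p\in \wt G_R$; together with $p\in 1.2B_Q$, this contradicts the assumption $1.2B_Q\cap\wt G_R=\varnothing$, ruling out the non-terminating case.

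The main obstacle is the tight enlargement constant~$1.2$, which (unlike the generous factor $2M$ in Lemma~\ref{lemfac1}(b)) leaves only $0.1\,r(B_Q)$ of slack to absorb simultaneously the radius $r(B_{Q_m})$ and the accumulated center-to-center drift along the iteration. Controlling this slack requires genuinely exploiting the geometric shrinkage $\ell(Q_{i+1})\leq t^{-1}\ell(Q_i)$ and choosing $t$ sufficiently large, which is compatible with the constraints on $t$ already imposed in the preceding lemmas.
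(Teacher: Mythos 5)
Your proposal is correct and follows essentially the same approach as the paper: the paper first establishes the nested inclusion $1.2B_{Q_{i+1}}\subset 1.2B_{Q_i}$ for $t$ large and then literally copies the iteration and termination argument of Lemma~\ref{lemfac1}(b), whereas you instead verify $P\subset 1.2B_Q$ by directly telescoping the center-to-center distances, which is the same geometric computation. Both versions rely on the same two facts — $\ell(Q_{i+1})\leq t^{-1}\ell(Q_i)$ forced by the failure of \rf{eqnei1}, and the definition of $\wt G_R$ for the non-terminating case — so there is no substantive difference.
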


\begin{proof}
If $Q,Q'\in\LD_\sep(R)$ satisfy \rf{eqqq00}, then \rf{eqnei2} holds, and thus \rf{eqnei1} must fail.
Therefore, $Q$ must be much smaller than $Q'$, and so
\[%
\ell(Q)\leq t^{-1}\ell(Q')\quad\mbox{ and }\quad 1.2B_Q\subset 1.2 B_{Q'}\quad \mbox{if $\ell(Q)<\ell(Q')$},
\]%
assuming $t$ big enough to guarantee the last inclusion. Now we can copy the proof of Lemma~\ref{lemfac1}(b) with $2M$ replaced everywhere by $1.2$.
\end{proof}

For each $Q\in\wt\LD(R)$ we choose a point
$$w_Q\in Q \setminus \bigcup_{P\in \sss(R)} K_P^{1/2} \cap (R\setminus M\,B_P).$$

\begin{lemma}
There exists an $n$-dimensional Lipschitz graph $\Gamma_0$ which passes through every point $w_P$, $P\in\wt\LD(R)$.
\end{lemma}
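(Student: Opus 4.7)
The plan is to reduce the construction of $\Gamma_0$ to a single cone-avoidance property. Specifically, I will show that for any two distinct $P,P'\in\wt\LD(R)$, we have $w_P-w_{P'}\notin K^{1/2}$. Once this is established, the set $\{w_P:P\in\wt\LD(R)\}$ is contained in an $n$-dimensional Lipschitz graph $\Gamma_0$ over $V_0^\perp$ with slope depending only on the aperture $s$, by exactly the same geometric fact invoked at the end of the proof of Lemma~\ref{lemgr}: avoidance of the half-cone $K^{1/2}$ forces $|P_{V_0^\perp}(x-y)|\gtrsim_s|x-y|$ for any two points $x,y$ of the set.

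To prove the cone-avoidance, suppose for contradiction that $w_P-w_{P'}\in K^{1/2}$ for some distinct $P,P'\in\wt\LD(R)$, and assume without loss of generality that $\ell(P)\leq\ell(P')$. Since $w_{P'}\in P'\subset 2B_{P'}$, we get $w_P\in K^{1/2}_{w_{P'}}\subset K^{1/2}_{P'}$. On the other hand, by the selection rule defining $w_P$, and because $P'\in\wt\LD(R)\subset\wt\sss(R)\subset\sss(R)$, one has $w_P\notin K^{1/2}_{P'}\cap(R\setminus MB_{P'})$. The only way to reconcile these two facts is $w_P\in MB_{P'}$, which in particular yields $P\cap MB_{P'}\neq\varnothing$ and hence $\dist(P,P')\lesssim M\,\ell(P')$.

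Now I extract a contradiction from the separation structure of $\wt\LD(R)$, splitting by the size ratio. If $\ell(P)\geq t^{-1}\ell(P')$, then the side-length ratio is within a factor $t$, and the distance bound gives $\dist(P,P')\leq cM\ell(P')\leq t(\ell(P)+\ell(P'))$ as soon as $t\geq cM$; so $P$ and $P'$ would be $t$-neighbors, contradicting the $t$-separation of $\LD_{\sep}(R)\supset\wt\LD(R)$. If instead $\ell(P)<t^{-1}\ell(P')$, then for $t$ large enough depending on $M$, the containment $w_P\in MB_{P'}$ together with $w_P\in P$ forces $1.1B_P\subset 1.1B_{P'}$, so $1.1B_P\cap 1.1B_{P'}\neq\varnothing$ with $\ell(P)<\ell(P')$. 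But then the elimination step used to pass from $\LD_{\sep}(R)$ to $\wt\LD(R)$ would have removed the larger cube $P'$, contradicting $P'\in\wt\LD(R)$.

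The main obstacle is the interplay and ordering of the constants: one must first fix $M$ (large in terms of $s$, as in Lemma~\ref{keylemma}), then choose $t$ large enough in terms of $M$ so that both the $t$-neighbor argument and the ball-inclusion argument go through, while keeping $A,\tau,\ve$ frozen. Apart from this bookkeeping, the argument is a faithful reuse of the cone-avoidance strategy already employed for $\wt G_R$ in Lemma~\ref{lemgr}, with the key new ingredient being that the defining property of $w_P$ (which is available precisely because we restricted to $\wt\sss(R)$) substitutes for the containment of $\wt G_R$ in $\bigcap_k\bigcup 2MB_Q$.
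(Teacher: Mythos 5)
Your overall strategy — establish cone-avoidance for the points $w_P$ and reduce to the separation of $\wt\LD(R)$ — is the same as the paper's, and the reduction to showing $w_P-w_{P'}\notin K^{1/2}$ is correct. However, there is a genuine gap in the containment you extract and, consequently, in Case~2.

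From $w_P-w_{P'}\in K^{1/2}$ and the selection rule you derive $w_P\in MB_{P'}$, which only tells you that $w_P$ lies within distance $\sim M\,\ell(P')$ of $x_{P'}$. That is compatible with $w_P$ lying far outside $1.1B_{P'}$ (indeed anywhere in the annulus between $1.1B_{P'}$ and $MB_{P'}$). Since $B_P$ is a tiny ball attached to $w_P$ when $\ell(P)\ll\ell(P')$, the inclusion $1.1B_P\subset 1.1B_{P'}$ simply does not follow; you cannot even conclude $1.1B_P\cap 1.1B_{P'}\neq\varnothing$. So the elimination step is not triggered and Case~2 gives no contradiction.

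The fix is to use the \emph{symmetric} consequence: since $K^{1/2}$ is symmetric, $w_{P'}-w_P\in K^{1/2}$ as well, so $w_{P'}\in K_{w_P}^{1/2}\subset K_P^{1/2}$, and the selection rule applied to $w_{P'}$ with $S=P\in\sss(R)$ yields $w_{P'}\in MB_P$. This is the inclusion at the \emph{smaller} scale, giving $|w_P-w_{P'}|\lesssim M\,\ell(P)$. Now in Case~2 ($\ell(P)<t^{-1}\ell(P')$) the post-elimination disjointness $1.1B_P\cap 1.1B_{P'}=\varnothing$ forces $|w_P-w_{P'}|\gtrsim r(B_{P'})\gtrsim t\,\ell(P)$, which contradicts $|w_P-w_{P'}|\lesssim M\,\ell(P)$ once $t$ is taken large relative to $M$. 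This is exactly what the paper does (it converts the selection-rule exclusion into $w_{P'}\notin K_{w_P}^{1/2}\setminus B(w_P,c_1M\ell(P))$ and then rules out $w_{P'}\in B(w_P,c_1M\ell(P))$); the rest of your constant-ordering discussion ($M$ fixed by $s$, then $t$ large in terms of $M$) is the right bookkeeping.
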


\begin{proof}
It is enough to show that for any pair $w_Q$, $w_{Q'}$, with $Q,Q'\in\wt\LD(R)$, $Q\neq Q'$,
 we have 
 \begin{equation}\label{eqcone44}
 w_Q-w_{Q'}\not\in K^{1/2}.
 \end{equation}
 To show this, suppose that $\ell(Q)\leq \ell(Q')$. By the construction of the points $w_P$, $P\in\wt\LD(R)$, it follows that
 $$w_{Q'}\not\in  K_Q^{1/2} \cap (R\setminus M\,B_Q)\quad\mbox { and }\quad w_{Q}\not\in  K_{Q'}^{1/2} \cap (R\setminus M\,B_{Q'}),$$
which implies that 
$$w_{Q'}\not\in K_{w_Q}^{1/2}\setminus B(w_Q,c_1M\ell(Q)) \quad\mbox{ and }\quad w_{Q}\not\in K_{w_{Q'}}^{1/2}\setminus B(w_{Q'},c_1M\ell(Q')),$$
for some $c_1\approx1$. 

So to conclude the proof of \rf{eqcone44} it suffices to show that 
\begin{equation}\label{eqcone45}
w_{Q'}\not\in B(w_Q,c_1M\ell(Q)).
\end{equation}
To this end, notice that if $w_{Q'}\in B(w_Q,c_1M\ell(Q))$, then 
$$\dist(Q,Q')\leq|w_Q-w_{Q'}|\leq c_1M\ell(Q)\leq t(\ell(Q)+ \ell(Q')),$$
assuming $t=t(M)$ big enough. Since $Q$ and $Q'$ are not $t$-neighbors, we must have
$$\ell(Q)\leq t^{-1}\,\ell(Q').$$
Together with the fact that $1.1B_Q\cap 1.1B_{Q'}=\varnothing$, and recalling that 
$w_Q\in Q\subset B_Q$ and $w_{Q'}\in Q'\subset B_{Q'}$, this implies that
$$|w_Q-w_{Q'}|\geq \frac1{10}\,r(B_{Q'}) \geq \frac {c\,t}{10}\,\,r(B_{Q}) > c_1\,M\,\ell(Q)$$
if $t(M)$ is big enough again.
So \rf{eqcone45} holds, and the lemma follows.
\end{proof}

\vv
\begin{proof}[Proof of Lemma~\ref{lemld*}]
Consider the family of balls $\{ 1.5 B_Q\}_{Q\in\LD_\sep(R)}$. By the covering Theorem 9.31 from \cite{Tolsa-book},
there exists a subfamily $\FF\subset \LD_\sep(R)$ such that:
\begin{itemize}
\item[(i)] $\bigcup_{Q\in\LD_\sep(R)}  1.5 B_Q \subset\bigcup_{Q\in\FF} 2 B_Q$,
\item[(ii)] $\sum_{Q\in\FF} \chi_{ 1.5 B_Q}\leq C$.
\end{itemize}
Then
\begin{align*}
\sum_{Q\in\LD_\sep(R)}\mu(Q) \leq \sum_{Q\in\FF}\mu(2 B_Q)\leq c\,\tau\,\Theta_\mu(2B_R)\sum_{Q\in\FF}r(B_Q)^n.
\end{align*}
Recall now that for each $B_Q$, with $Q\in\FF\subset\LD_\sep(R)$, there exists some point $w_P\in\Gamma_0\cap 1.2 B_Q$ or some point $x \in \wt G_R \cap 1.2B_Q \subset \Gamma_R \cap 1.2B_Q$.
Then we have
$$\HH^n( 1.5 B_Q\cap(\Gamma_0\cup\Gamma_R))\approx r(B_Q)^n.$$
So using the property (ii) of the covering, we obtain
$$\sum_{Q\in\FF}r(B_Q)^n\lesssim \sum_{Q\in\FF}\HH^n( 1.5 B_Q\cap(\Gamma_0\cup\Gamma_R)) \leq C\,\HH^n(2B_R \cap (\Gamma_0\cup\Gamma_R))\leq C'\,\ell(R)^n.$$
Thus,
$$\sum_{Q\in\LD_\sep(R)}\mu(Q)\leq c\,\tau\,\Theta_\mu(2B_R)\,\ell(R)^n \leq c\,\tau\,\mu(R).$$
Together with \rf{eqsum941}, this yields
$$
\sum_{Q\in\LD(R)\cap\wt\sss(R)}\mu(Q) \leq C\,\tau\,\mu(R),
$$
with $C$ depending on $t$.
Finally to conclude the lemma, we just take into account that, by \eqref{eqstop3}, we have
$$
\sum_{Q\in\LD(R)\setminus\wt\sss(R)}\mu(Q) \leq C(A,\tau,M)\,\ve\,\mu(R).
$$
\end{proof}

\vv

\subsection{The approximate Lipschitz graph}

In the next lemma we gather some of the previous results and estimates.

\begin{lemma}\label{lem47}
Let $R\in\DD^{db}_\mu$, and suppose that $\tau,\eta,\ve$ are small enough and $\ve\ll\tau$. 
Then there exists an $n$-dimensional Lipschitz graph $\Gamma_R$ whose slope is
bounded above by some constant depending only on $s$
such that the following holds:
\begin{itemize}

\item[(a)] $R\setminus \bigcup_{Q\in\sss(R)}Q \subset \Gamma_R$.
\vv

\item[(b)] There exists some constant $\Lambda_0>1$ such that for all $Q\in\tree(R)$, $$\Lambda_0\,B_Q\cap
\Gamma_R\neq \varnothing.$$

\item[(c)] We have:
\begin{equation}\label{eqcc1}
\sum_{Q\in\LD(R) }\mu(Q) \leq \tau^{1/2}\,\mu(R),
\end{equation}
and also
\begin{equation}\label{eqcc2}
\sum_{Q\in\BCE(R)} \mu(Q) \leq \frac1{\ve\,\Theta_\mu(2B_R)}
\sum_{S\in\tree(R)} \EE_\mu(S)\,\mu(S).
\end{equation}
\end{itemize}
\end{lemma}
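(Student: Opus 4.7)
The plan is to simply gather together the preceding lemmas of this section, taking $\Gamma_R$ to be the Lipschitz graph produced by Lemma~\ref{lemc0q}. For part (a), observe that by definition $R\setminus\bigcup_{Q\in\sss(R)}Q = G_R$, and we already noted $G_R\subset \wt G_R$. Since Lemma~\ref{lemc0q} yields a graph $\Gamma_R$ containing $\wt G_R$, part (a) follows. Part (b) is exactly the second conclusion of Lemma~\ref{lemc0q}, applied with a sufficiently large $\Lambda_0=\Lambda_0(M,t)$. Notice that the slope of $\Gamma_R$ depends only on $s$ (and not on the stopping parameters) because the cone $K^{1/2}$ depends only on $V_0$ and $s$.

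For the first estimate in (c), I would invoke Lemma~\ref{lemld*}, which gives
\[
\sum_{Q\in\LD(R)}\mu(Q) \leq c\,(\tau+C(A,\tau,M)\,\ve)\,\mu(R).
\]
First I would choose $\ve\ll \tau$ small enough (depending on $A,\tau,M$) so that $C(A,\tau,M)\,\ve \leq \tau$, giving a bound of $2c\,\tau\,\mu(R)$, and then assume $\tau$ is small enough (depending on the absolute constant $c$) so that $2c\,\tau\leq\tau^{1/2}$. This yields \rf{eqcc1}.

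For the second estimate in (c), I would argue purely combinatorially from the definition of $\BCE(R)$. Each cube $Q\in\BCE(R)\subset\BCE_0(R)$ satisfies
\[
\sum_{S\in\DD_\mu:\,Q\subset S\subset R}\EE_\mu(S)\geq \ve\,\Theta_\mu(2B_R).
\]
Multiplying by $\mu(Q)$, summing over $Q\in\BCE(R)$, and swapping the order of summation gives
\[
\ve\,\Theta_\mu(2B_R)\sum_{Q\in\BCE(R)}\mu(Q)
\;\leq\;
\sum_{S}\EE_\mu(S)\sum_{\substack{Q\in\BCE(R)\\ Q\subset S\subset R}}\mu(Q).
\]
The key observation for rewriting the right-hand side as in \rf{eqcc2} is that any cube $S$ with $Q\subsetneq S\subset R$ and $Q\in\sss(R)$ must lie in $\tree(R)$: if $S$ were strictly contained in some $P\in\sss(R)$, then $P\supsetneq Q$ with both $P,Q\in\sss(R)$ would contradict the maximality of $Q$ in $\sss(R)$. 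Together with the disjointness of the stopping cubes (so that $\sum_{Q\in\BCE(R), Q\subset S}\mu(Q)\leq \mu(S)$), this produces \rf{eqcc2}.

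I do not expect any real obstacle here: the substantive work — the cone/stopping arguments of Lemma~\ref{keylemma}, the Lipschitz graph construction in Lemmas~\ref{lemgr} and~\ref{lemc0q}, and the control on $\LD(R)$ via the auxiliary graph $\Gamma_0$ in Lemma~\ref{lemld*} — has already been carried out. The only mild point is the bookkeeping of how small $\tau,\eta,\ve$ need to be, but since $\ve\ll\tau$ and both are eventually fixed as absolute constants (after $A,M,t$ are chosen), this presents no real difficulty.
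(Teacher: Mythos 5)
Your proof is correct and essentially identical to the paper's; it cites the same lemmas (Lemma \ref{lemc0q}, Lemma \ref{lemld*}) for parts (a)--(b) and the first half of (c), and the Fubini swap for \rf{eqcc2} is exactly the computation the paper performs. The only slight improvement over the paper's wording is that you explicitly use the single graph produced by Lemma \ref{lemc0q} for both (a) and (b), and you spell out why every ancestor $S$ of a cube $Q\in\BCE(R)$ inside $R$ must belong to $\tree(R)$ (the paper leaves this implicit).
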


\begin{proof}
The statement (a) follows from Lemma \ref{lemgr}, and (b) from Lemma \ref{lemc0q}.
On the other hand, the estimate \rf{eqcc1} follows from the analogous one proved in Lemma \ref{lemld*} choosing $\ve$ and $\tau$ suitably small.
Finally, concerning \rf{eqcc2}, recall that if $Q\in\BCE(R)$, then
$$\sum_{S\in\DD_\mu:Q\subset S \subset R} \EE_\mu(S)
\geq \ve\,\Theta_\mu(2B_R).$$
Therefore,
\begin{align*}
\Theta_\mu(2B_R)\,\sum_{Q\in\BCE(R)} \mu(Q) & \leq \frac1\ve
 \sum_{Q\in\BCE(R)} \mu(Q) \sum_{S\in\DD_\mu:Q\subset S \subset R} \EE_\mu(S)\\
& = \frac1\ve
\sum_{S\in\tree(R)} \EE_\mu(S)
\sum_{Q\in\BCE(R):Q\subset S} \mu(Q) \\
& \leq \frac1\ve
\sum_{S\in\tree(R)} \EE_\mu(S)\,\mu(S).
\end{align*}
\end{proof}

\vv

\section{The family of $\ttt$ cubes}

\subsection{The family $\ttt$}
We are going to construct a family of cubes $\ttt\subset\DD_\mu^{db}$ inductively. To this end, we need to introduce some additional notation.
Given a cube $Q\in\DD_\mu$, we denote by $\MD(Q)$ the family  of maximal cubes 
(with respect to inclusion) from $\DD_\mu^{db}(Q)\setminus\{Q\}$. Recall that, by Lemma \ref{lemcobdob}, this family covers $\mu$-almost
all of $Q$. Moreover, by Lemma \ref{lemcad23} it follows that if $P\in\MD(Q)$, then $\Theta_\mu(2B_P)
\leq c\,\Theta_\mu(2B_Q)$. 
Given $R\in\DD^{db}_\mu$, we denote
$$\nex(R) = \bigcup_{Q\in\sss(R)} \MD(Q).$$
By the construction above, it is clear that the cubes in $\nex(R)$ are different from $R$ (because $\MD(Q)
\neq \{Q\}$).

For the record, notice that if $P\in\nex(R)$, then
\begin{equation}\label{eqdens***}
\Theta_\mu(2B_S)\leq c(A)\,\Theta_\mu(2B_R) \quad\mbox{for all $S\in\DD_\mu$ such that $P\subset S\subset R$.}
\end{equation}

We are now ready to construct the aforementioned family $\ttt$. We will have
$\ttt=\bigcup_{k\geq0}\ttt_k$. First we set
$$\ttt_0=\{R_0\}.$$
(Recall that $R_0\equiv\supp\mu$.) 
Assuming $\ttt_k$ has been defined, we set
$$\ttt_{k+1}  = \bigcup_{R\in\ttt_k} \nex(R).$$
Note that the families $\nex(R)$, with $R\in\ttt_k$, are pairwise disjoint. 
\vv

\subsection{The family of cubes $ID$}

We distinguish a special type of cubes from $\ttt$. For $R \in \ttt$, we write $R\in ID$ (increasing density) if
$$\mu\biggl(\,\bigcup_{Q\in\HD(R)} Q\biggr)\geq \frac12  \,\mu(R).
$$

\vv

\begin{lemma}\label{lemID}
Suppose that $A$ is big enough. If $R\in ID$, then
\begin{equation}\label{eqaki33}
\Theta_\mu(2B_R)\,\mu(R) \leq \frac12 \sum_{Q\in \nex(R)}\Theta_\mu(2B_Q)\,\mu(Q).
\end{equation}
\end{lemma}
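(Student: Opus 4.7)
The plan is to use the $ID$ hypothesis to focus on the family $\HD(R)$, which by definition carries at least half the mass of $R$ and satisfies $\Theta_\mu(2B_S) \geq A\,\Theta_\mu(2B_R)$ for every $S \in \HD(R)$. For each such $S$, I will exhibit a single distinguished cube $P_S \in \MD(S) \subset \nex(R)$ whose contribution $\Theta_\mu(2B_{P_S})\mu(P_S)$ is already comparable to $\Theta_\mu(2B_S)\mu(S)$; summing over the pairwise disjoint cubes $S \in \HD(R)$ then produces a factor of $A$, which dominates once $A$ is chosen large enough.

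The tool for producing $P_S$ is Lemma~\ref{lemcubdob*}. Since $S \in \HD(R) \subset \DD_\mu^{db}$ we have $\mu(2B_S) \leq C_0\,\mu(S)$, so that lemma (applied with $C_1 = C_0$) produces a doubling cube $Q_S \subsetneq S$ with $\mu(Q_S) \approx \mu(S)$ and $\ell(Q_S) \approx \ell(S)$. Let $P_S$ denote the largest doubling cube in $\DD_\mu^{db}(S)\setminus\{S\}$ containing $Q_S$; by maximality $P_S \in \MD(S) \subset \nex(R)$. Then $\mu(P_S) \geq \mu(Q_S) \gtrsim \mu(S)$ and $\ell(P_S) \leq \ell(S)$, and since $P_S$ is doubling we have $\mu(2B_{P_S}) \approx \mu(P_S)$. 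Hence
\[
\Theta_\mu(2B_{P_S})
\;\approx\; \frac{\mu(P_S)}{\ell(P_S)^n}
\;\geq\; \frac{c\,\mu(S)}{\ell(S)^n}
\;\approx\; c\,\Theta_\mu(2B_S),
\]
so $\Theta_\mu(2B_{P_S})\,\mu(P_S) \gtrsim \Theta_\mu(2B_S)\,\mu(S)$, with implicit constants depending only on the David--Mattila parameters.

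The cubes $\{P_S\}_{S \in \HD(R)}$ are pairwise distinct because the stopping cubes in $\sss(R) \supset \HD(R)$ are disjoint and $P_S \subset S$. Therefore
\[
\sum_{Q \in \nex(R)} \Theta_\mu(2B_Q)\,\mu(Q)
\;\geq\; \sum_{S \in \HD(R)} \Theta_\mu(2B_{P_S})\,\mu(P_S)
\;\gtrsim\; \sum_{S \in \HD(R)} \Theta_\mu(2B_S)\,\mu(S)
\;\geq\; c_1\,A\,\Theta_\mu(2B_R)\cdot \tfrac{1}{2}\mu(R),
\]
using $\Theta_\mu(2B_S) \geq A\,\Theta_\mu(2B_R)$ and the defining inequality $\sum_{S \in \HD(R)} \mu(S) \geq \mu(R)/2$. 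Choosing $A$ large enough that $c_1 A/2 \geq 2$ yields \eqref{eqaki33}. The only real obstacle is the second step, namely extracting a single cube in $\MD(S)$ with mass and density both comparable to those of $S$ itself; once Lemma~\ref{lemcubdob*} is available this is immediate, and the rest is bookkeeping.
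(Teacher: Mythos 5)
Your proposal is correct and follows essentially the same route as the paper: both arguments reduce the estimate to the $\HD(R)$ cubes via the $ID$ hypothesis and then invoke Lemma~\ref{lemcubdob*} (together with the doubling of $S\in\HD(R)$) to pass from $S$ to a cube of $\nex(R)$ inside $S$ carrying comparable mass and density. The only cosmetic difference is that the paper states the intermediate inequality $\Theta_\mu(2B_S)\,\mu(S)\lesssim \sum_{P\in\nex(R):P\subset S}\Theta_\mu(2B_P)\,\mu(P)$ and leaves the application of Lemma~\ref{lemcubdob*} as ``immediate,'' whereas you unpack that step explicitly by extracting a single distinguished cube $P_S\in\MD(S)\subset\nex(R)$; this is precisely how one would justify the paper's claim.
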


\begin{proof}
 Recalling that $\Theta_\mu(2B_Q)\geq A\,\Theta_\mu(2B_R)$ %
 for every $Q\in{\HD}(R)$,
we deduce that
 $$\Theta_\mu(2B_R)\,\mu(R) \leq 2 \sum_{Q\in {\HD}(R)}\Theta_\mu(2B_R)\,\mu(Q)
\leq 2A^{-1}\sum_{Q\in {\HD}(R)}\Theta_\mu(2B_Q)\,\mu(Q).$$
Since the cubes from ${\HD}(R)$ belong to $\DD_\mu^{db}$, it follows immediately from Lemma \ref{lemcubdob*} that for any $Q\in\HD(R)$,
\begin{equation*}
\Theta_\mu(2B_Q)\,\mu(Q)\lesssim \sum_{P\in\nex(R):P\subset Q}\Theta_\mu(2B_P)\,\mu(P),
\end{equation*}
and then  it is clear that \rf{eqaki33} holds if $A$ is taken big enough.
\end{proof}
\vv

\section{The packing condition}

\begin{lemma}\label{lemkey62}
Suppose that 
\begin{equation}\label{eqcreixr0} 
 \mu(B(x,r))\leq c_0\,r^n\quad \mbox{ for all
$x\in\supp\mu$, $r>0$.}
\end{equation}
For all $S\in\ttt$ we have
\begin{equation}\label{eqsum441}
\sum_{R\in\ttt:R\subset S}\Theta_\mu(2B_R)\,\mu(R)\lesssim_{\ve,\eta,c_0} \mu( S)
+ \iint_{\begin{subarray}{l}x\in 2B_S\\x-y\in K\end{subarray}} \frac1{|x-y|^n}\,d\mu(x)\,d\mu(y)
,
\end{equation}
assuming that the constants $A,\tau,\ve$, and $\eta$ 
have been chosen suitably.
\end{lemma}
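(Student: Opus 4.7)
Set $\Sigma:=\sum_{R\in\ttt,\,R\subset S}\Theta_\mu(2B_R)\,\mu(R)$. The plan is to split the sum according to whether $R\in ID$ or $R\notin ID$ and to use Lemma \ref{lemID} to absorb the $ID$-part into $\tfrac12\Sigma$. Concretely, since every $Q\in\ttt$ with $Q\subsetneq S$ belongs to $\nex(R)$ for a unique $\ttt$-parent $R=\pi_\ttt(Q)\in\ttt$, summing the inequality $\Theta_\mu(2B_R)\mu(R)\leq\tfrac12\sum_{Q\in\nex(R)}\Theta_\mu(2B_Q)\mu(Q)$ over $R\in ID$ with $R\subset S$ and reindexing by $\pi_\ttt$ gives
$$\sum_{R\in ID,\,R\subset S}\Theta_\mu(2B_R)\mu(R)\leq\tfrac12\sum_{\substack{Q\in\ttt,\,Q\subsetneq S\\ \pi_\ttt(Q)\in ID}}\Theta_\mu(2B_Q)\mu(Q)\leq\tfrac12\bigl(\Sigma-\Theta_\mu(2B_S)\mu(S)\bigr)\leq\tfrac12\Sigma.$$
Writing $\Sigma=\sum_{ID}+\sum_{nID}$ then yields $\tfrac12\Sigma\leq\sum_{R\notin ID,\,R\subset S}\Theta_\mu(2B_R)\mu(R)$. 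To justify the finiteness needed for this subtraction, I would first run the argument with $\ttt$ truncated at generation $K$, using the trivial bound $\Theta_\mu(2B_R)\mu(R)\lesssim c_0\mu(R)$ on the leaves of the truncated tree (which contributes only an extra $c_0\mu(S)$), and then pass to the limit.

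For $R\notin ID$, the defining inequality $\sum_{Q\in\HD(R)}\mu(Q)<\tfrac12\mu(R)$ combined with the disjoint decomposition $\mu(R)=\mu(\good(R))+\sum_{Q\in\HD(R)}\mu(Q)+\sum_{Q\in\LD(R)}\mu(Q)+\sum_{Q\in\BCE(R)}\mu(Q)$ gives $\tfrac12\mu(R)\leq\mu(\good(R))+\sum_{\LD(R)}\mu(Q)+\sum_{\BCE(R)}\mu(Q)$. From Lemma \ref{lem47}(c) the $\LD$-sum is at most $\tau^{1/2}\mu(R)$, which I absorb into the left by choosing $\tau^{1/2}\leq 1/4$, while the same lemma bounds the $\BCE$-sum by $\ve^{-1}\Theta_\mu(2B_R)^{-1}\sum_{S'\in\tree(R)}\EE_\mu(S')\mu(S')$. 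Multiplying by $\Theta_\mu(2B_R)$ and summing over $R\notin ID$ with $R\subset S$,
$$\sum_{R\notin ID,\,R\subset S}\Theta_\mu(2B_R)\mu(R)\lesssim\sum_{R\in\ttt,\,R\subset S}\Theta_\mu(2B_R)\mu(\good(R))+\frac1\ve\sum_{R\in\ttt,\,R\subset S}\sum_{S'\in\tree(R)}\EE_\mu(S')\mu(S').$$

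The first term is controlled via the growth hypothesis $\Theta_\mu(2B_R)\lesssim c_0$ together with the fact that $\{\good(R)\}_{R\in\ttt}$ are pairwise disjoint subsets of $S$ (the $\ttt$-descendants of $R$ all lie in $\bigcup_{Q\in\sss(R)}Q$, which is $\mu$-almost disjoint from $\good(R)$), yielding $\lesssim c_0\mu(S)$. For the second, the families $\tree(R)$ are pairwise disjoint as $R$ ranges over $\ttt$ (any cube in $\tree(R_1)$ for $R_1$ a proper $\ttt$-descendant of $R_2$ lies strictly inside some cube of $\sss(R_2)$, hence is not in $\tree(R_2)$), so
$$\sum_{R}\sum_{S'\in\tree(R)}\EE_\mu(S')\mu(S')\leq\sum_{S'\in\DD_\mu,\,S'\subset S}\EE_\mu(S')\mu(S').$$
Unfolding \rf{eq:defn-EEmuQ} and applying Fubini, a fixed pair $(x,y)$ with $x-y\in K$ contributes to $\EE_\mu(S')$ only when $\ell(S')\in[\eta|x-y|,\eta^{-1}|x-y|]$ and $x\in 2B_{S'}$, pinning $\ell(S')$ to $O_\eta(1)$ generations and, by the near-disjointness of the balls $\{5B(S')\}$, to boundedly many cubes per generation; the multiplicity is therefore $\lesssim_\eta 1$. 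Finally, a short computation using $r(S')\leq C_0A_0^{-1}r(S)$ and $A_0\geq 2C_0$ shows $2B_{S'}\subset 2B_S$ whenever $S'\subsetneq S$, so the integral is on $2B_S$ as required, giving $\lesssim_\eta\iint_{x\in 2B_S,\,x-y\in K}|x-y|^{-n}\,d\mu(x)d\mu(y)$. Collecting yields $\Sigma\lesssim_{\ve,\eta,c_0}\mu(S)+\iint_{x\in 2B_S,\,x-y\in K}|x-y|^{-n}\,d\mu(x)d\mu(y)$. The main obstacle is the careful tree bookkeeping: the $\pi_\ttt$-reindexing in the $ID$-absorption, the pairwise disjointness of $\{\good(R)\}$ and of $\{\tree(R)\}$, the multiplicity bound in the Fubini step, and the inclusion $2B_{S'}\subset 2B_S$ that keeps the final integral on $2B_S$ (and not on a dilate).
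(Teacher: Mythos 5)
Your proof is correct and follows essentially the same route as the paper: split $\ttt(S)$ into $ID$ and non-$ID$ cubes, absorb the $ID$ part via Lemma~\ref{lemID} (with a truncation at finite generation to justify the subtraction, exactly as the paper does), and for the non-$ID$ cubes use the $\good$/$\LD$/$\BCE$ decomposition from Lemma~\ref{lem47}, absorbing the $\LD$ part via $\tau^{1/2}\leq 1/4$, bounding the $\good$ part by disjointness and the growth condition, and bounding the $\BCE$ part via the Fubini argument that collapses $\sum_{S'\in\DD_\mu(S)}\EE_\mu(S')\mu(S')$ onto the conical energy over $2B_S$. The extra bookkeeping you supply (the $\pi_\ttt$ reindexing, the disjointness of $\{\tree(R)\}_{R\in\ttt}$, the multiplicity bound in the Fubini step, and the inclusion $2B_{S'}\subset 2B_S$ using $A_0\gg C_0$) is all accurate and matches what the paper uses implicitly.
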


\begin{proof}
We denote $\ttt( S)= \ttt\cap\DD_\mu( S)$ and $\ttt_j( S)= \ttt_j\cap\DD_\mu( S)$.
For a given $k\geq 0$, we also write 
$$\ttt_0^k( S) = \bigcup_{0\leq j\leq k}\ttt_j( S) ,$$
and also
$$ID_0^k = ID \cap \ttt_0^k( S).$$

To prove \rf{eqsum441}, first we deal with the cubes from the $ID$ family.
By Lemma \ref{lemID}, for every $R\in ID$ we have
$$\Theta_\mu(2B_R)\,\mu(R) \leq \frac12 \sum_{Q\in\nex(R)} \Theta_\mu(2B_Q)\,\mu(Q)$$
 and hence we obtain
$$
\sum_{R\in ID_0^k} \Theta_\mu(2B_R)\,\mu(R) \leq \frac12 \sum_{R\in ID_0^k}\sum_{Q\in \nex(R)}\Theta_\mu(2B_Q)\,\mu(Q)
\leq \frac12\sum_{Q\in \ttt_0^{k+1}( S) }\Theta_\mu(2B_Q)\,\mu(Q),
$$
because the cubes from $\nex(R)$ with $R\in\ttt_0^k( S) $ belong to $\ttt_0^{k+1}( S) $. 
Thus,
\begin{align*}
\sum_{R\in \ttt_0^k( S) } \!\!\Theta_\mu(2B_R)\,\mu(R) & = \sum_{R\in \ttt_0^k( S) \setminus ID_0^k} \Theta_\mu(2B_R)\,\mu(R)+
\sum_{R\in ID_0^k} \Theta_\mu(2B_R)\,\mu(R)\\
& \leq \!\!\sum_{R\in \ttt_0^{k}( S) \setminus ID_0^k}\!\!\!\Theta_\mu(2B_R)\,\mu(R) +
\frac12\!\sum_{R\in \ttt_0^{k}( S)}\!\!\Theta_\mu(2B_R)\,\mu(R) +
 C\,c_0\,\mu( S),
\end{align*}
where, for the last inequality, we took into account that $\Theta_\mu(2B_R) \leq C\,c_0$ for every $R\in\ttt_{k+1}( S) $ because of the assumption \rf{eqcreixr0}.
Using that $$\sum_{R\in \ttt_0^{k}( S)}\Theta_\mu(2B_R)\,\mu(R)\leq (k+1)\,C\,c_0\,\mu(S)<\infty,$$ we deduce 
$$\sum_{R\in \ttt_0^k( S) } \Theta_\mu(2B_R)\,\mu(R)  \leq 2\sum_{R\in \ttt_0^{k}( S) \setminus ID_0^k}\Theta_\mu(2B_R)\,\mu(R) +
 C\,c_0\,\mu(S).$$
Letting $k\to\infty$, we derive
\begin{equation}\label{eqaaii}
\sum_{R\in \ttt( S)} \Theta_\mu(2B_R)\,\mu(R)\leq 2\sum_{R\in \ttt( S) \setminus ID} \Theta_\mu(2B_R)\,\mu(R)+  C \,c_0\,\mu( S).
\end{equation}

To estimate the first term on the right hand side of \rf{eqaaii} we use the fact that, for $R\in\ttt( S) \setminus ID$, 
we have
$$
\mu\biggl(R\setminus \bigcup_{Q\in \HD(R)} Q\biggr)\geq \frac12\,\mu(R),$$
and then using Lemma \ref{lemcobdob}, we get 
\begin{align}\label{eqdd9}
\mu(R)& \leq 
2\,\mu\biggl(R\setminus \bigcup_{Q\in \sss(R)} Q\biggr) 
+ 2
\,\mu\biggl(
\bigcup_{Q\in\sss(R)\setminus \HD(R) }Q\biggr)\\
& = 2 \,\mu\biggl(R\setminus \bigcup_{Q\in \nex(R)} Q\biggr) 
+2\sum_{Q\in\LD(R)}\mu(Q) + 2\sum_{Q\in\BCE(R)}\mu(Q)
.\nonumber
\end{align}
Recall now that, by \rf{eqcc1},
$$\sum_{Q\in\LD(R)}\mu(Q)\leq \tau^{1/2}\mu(R).$$
Choosing  $\tau\leq1/16$, say, from \rf{eqdd9} we infer that
$$\mu(R) \leq 4 \,\mu\biggl(R\setminus \bigcup_{Q\in \nex(R)} Q\biggr) 
+ 4\sum_{Q\in\BCE(R)}\mu(Q).
$$
So we deduce that
\begin{align}\label{eqal163}
\sum_{R\in \ttt( S) \setminus ID} \Theta_\mu(2B_R)\,\mu(R)
 & \leq 4 
\sum_{R\in\ttt( S)} \Theta_\mu(2B_R)\,\mu\biggl(R\setminus \bigcup_{Q\in \nex(R)} Q\biggr)
\nonumber\\
&\quad
+4\sum_{R\in\ttt( S)}\Theta_\mu(2B_R)\sum_{Q\in\BCE(R)}\mu(Q).
\end{align}
To deal with the first sum on the right hand side above, we take into account that the sets $R\setminus \bigcup_{Q\in \nex(R)} Q$, with $R\in\ttt( S)$, are pairwise disjoint, and also that $\Theta_\mu(2B_R)
\leq C\,c_0$, by the 
condition \rf{eqcreixr0}.
 Then we get
 \begin{equation}\label{eqfacgg1}
\sum_{R\in\ttt(S)} \Theta_\mu(2B_R)\,\mu\biggl(R\setminus \bigcup_{Q\in \nex(R)} Q\biggr)\leq C\, c_0\,\mu(S).
\end{equation}

To deal with the second sum in \rf{eqal163}, we use \rf{eqcc2} to obtain
$$\sum_{R\in\ttt(S)}\Theta_\mu(2B_R)\sum_{Q\in\BCE(R)}\mu(Q) \leq \frac1\ve\sum_{R\in\ttt(S)}
\sum_{P\in\tree(R)} \EE_\mu(P)\,\mu(P)\leq \frac1\ve\sum_{P\in\DD_\mu(S)}
 \EE_\mu(P)\,\mu(P).
$$
Denote by $\ell_k$ the side length of the cubes from $\DD_{\mu,k}$.
By the definition of $\EE_\mu(P)$ (see \eqref{eq:defn-EEmuQ}) and the finite overlapping of the balls $2B_P$ among the cubes $P$ of the same
generation, we get
\begin{align*}
\sum_{P\in\DD_\mu(S)}
 \EE_\mu(P)\,\mu(P) &= \sum_{k}\sum_{P\in\DD_{\mu,k}(S)}
 \iint_{\begin{subarray}{l}x\in 2B_P\\x-y\in K\\ \eta\,\ell(P)\leq |x-y|\leq \eta^{-1}\ell(P)
\end{subarray}} \frac1{|x-y|^n}\,d\mu(x)\,d\mu(y)\\
& \lesssim 
\sum_{k}
\iint_{\begin{subarray}{l}x\in 2B_S\\x-y\in K\\ \eta\,\ell_k\leq |x-y|\leq \eta^{-1}\ell_k
\end{subarray}} \frac1{|x-y|^n}\,d\mu(x)\,d\mu(y)\\
& \lesssim_\eta
\iint_{\begin{subarray}{l}x\in 2B_S\\x-y\in K\end{subarray}} \frac1{|x-y|^n}\,d\mu(x)\,d\mu(y).
\end{align*}
Therefore,
$$\sum_{R\in\ttt(S)} \Theta_\mu(2B_R)\,\sum_{Q\in\BCE(R)} \mu(Q) \lesssim_{\ve,\eta}
\iint_{\begin{subarray}{l}x\in 2B_S\\x-y\in K\end{subarray}} \frac1{|x-y|^n}\,d\mu(x)\,d\mu(y).$$
Together with \rf{eqaaii}, \rf{eqal163}, and \rf{eqfacgg1},
this yields \rf{eqsum441}.
\end{proof}
\vv

\section{Proof of Lemma \ref{lemcorona}}\label{sec8}

We have to show that the family $\ttt$ satisfies the properties stated  in Lemma  \ref{lemcorona}.
By the definition of the family $\nex(R)$ and Lemma \ref{lemcobdob}, we have
$$\bigcup_{Q\in\sss(R)}Q = \bigcup_{Q\in\nex(R)} Q \qquad\text{up to a set of $\mu$-measure $0$.}$$
Thus, by Lemma \ref{lem47}(a),
$\mu$-almost all of 
$R\setminus \bigcup_{Q\in\nex(R)} Q$ is contained in $\Gamma_R$, and we have verified property (a) of Lemma \ref{lemcorona}.

Next we deal with the property (b). Given $Q\in\nex(R)$ we have to check that there exists some $\wt Q\in\DD_\mu$ such that $\delta_\mu(Q,\wt Q)\leq c\,\Theta_\mu(2B_R)$ and $2B_{\wt Q}\cap\Gamma_R\neq\varnothing$. Let $Q'\in\sss(R)$ such that
$Q\subset Q'$. By Lemma \ref{lem47}(b), there exists some constant $\Lambda_0>1$ such that $\Lambda_0\,B_{Q'}\cap
\Gamma_R\neq \varnothing.$
This implies that there exists one cube $\wt Q\in\tree(R)$ such that $Q' \subset \wt Q$, $\ell(\wt Q)\approx\ell(Q')$, and 
$2B_{\wt Q}\cap
\Gamma_R\neq \varnothing$.
We split
$$\delta_\mu(Q,\wt Q) = \int_{2B_{\wt Q}\setminus 2B_{Q'}}\frac1{|y-x_Q|^n}\,d\mu(y) + 
\int_{2B_{Q'}\setminus 2B_{Q}}\frac1{|y-x_Q|^n}\,d\mu(y).$$
To estimate the first integral we use the fact that $|y-x_Q|\approx\ell(Q')\approx\ell(\wt Q)$ in the domain of integration, and we derive
$$\int_{2B_{\wt Q}\setminus 2B_{Q'}}\frac1{|y-x_Q|^n}\,d\mu(y)\lesssim \Theta_\mu(2B_{\wt Q})\lesssim_A \Theta_\mu(2B_R).$$
To estimate the second one we take into account that there are no doubling cubes strictly between $Q$ and $Q'$.
Then from Lemma \ref{lemcad23} and standard estimates, it easily follows that
$$\int_{2B_{Q'}\setminus 2B_{Q}}\frac1{|y-x_Q|^n}\,d\mu(y)\lesssim \Theta_\mu(100B(Q')).$$
If $Q'$ is doubling, then $\Theta_\mu(100B(Q'))\lesssim \Theta_\mu(2B_{Q'})\lesssim A\,\Theta_\mu(2B_R)$.
Otherwise $Q'\neq R$ and the parent of $Q'$, which we denote by $\wh{Q'}$, belongs to $\tree(R)\setminus\sss(R)$. Thus, by Lemma \ref{lemdobbb1},
$$\Theta_\mu(100B(Q'))\lesssim \Theta_\mu(2B_{\wh{Q'}})\lesssim A\,\Theta_\mu(2B_R),$$
taking into account that $100B(Q'))\subset2B_{\wh{Q'}}$ for the first inequality (since $A_0\gg1$ in the David-Mattila lattice).
Hence in any case, $\delta_\mu(Q,\wt Q)\lesssim_A\Theta_\mu(2B_R)$ and (b) in Lemma \ref{lemcorona} holds.

Next, we observe that (c) in Lemma \ref{lemcorona} follows from Lemma 
\ref{lemdobbb1} in case that $Q\in\tree(R)$, and from \rf{eqdens***} otherwise. 

Finally, the packing condition \rf{eqpack*1} has been proved in Lemma \ref{lemkey62}.

\vv

\section{Application to curvature, Riesz transforms, and capacities}

\subsection{Curvature of measures and Riesz transforms}

To estimate the curvature of $\mu$ we will use the following result:

\begin{lemma}\label{lemcorona2}
Let $\mu$ be a measure satisfying the growth condition \rf{eqgrowth00}.
Suppose that there exists a family $\ttt\subset\DD_\mu^{db}$ as in Section \ref{section:corona} %
such that, for all $R\in\ttt$, there exists an $n$-dimensional Lipschitz graph $\Gamma_R$ whose slope is
uniformly bounded above by some constant independent of $R$ such that:
\begin{itemize}
\item[(a)] $\mu$-almost all $\good(R)$ is contained in $\Gamma_R$.

\item[(b)] For all $Q\in\nex(R)$ there exists another cube $\wt Q\in\DD_\mu$ such that $\delta_\mu(Q,\wt Q)\leq c\,\Theta_\mu(2B_R)$ and $2B_{\wt Q}\cap\Gamma_R\neq\varnothing$.

\item[(c)] For all $Q\in\tr(R)$, $\Theta_\mu(2B_Q)\leq c\,\Theta_\mu(2B_R)$.
\end{itemize}
In the case $n=1$, we have:
\begin{equation}\label{eqcurv*1} 
c^2(\mu)\lesssim 
\sum_{R\in\ttt}\Theta_\mu(2B_R)^2\,\mu(R),
\end{equation}
and for any integer $n\in (0,d)$,
\begin{equation}\label{eqriesz*1} 
\sup_{\ve>0}\|\RR^n_\ve\mu\|_{L^2(\mu)}^2\lesssim
\sum_{R\in\ttt}\Theta_\mu(2B_R)^2\,\mu(R),
\end{equation}
with the implicit constant depending only on $c_0$ in both estimates.
\end{lemma}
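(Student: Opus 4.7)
The plan is, for each $R \in \ttt$, to construct an auxiliary measure $\mu_R$ supported on the Lipschitz graph $\Gamma_R$ that approximates $\mu$ on the tree rooted at $R$, and then to reduce the estimate to bounding $c^2(\mu_R)$ (respectively $\|\RR^n_\ve \mu_R\|_{L^2(\mu_R)}^2$) via the standard $L^2$ theory of Calder\'on--Zygmund operators on Lipschitz graphs.

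Concretely, I would set
$$\mu_R := \mu|_{\good(R)} + \sum_{Q \in \nex(R)} \mu(Q)\,\sigma_Q,$$
where $\sigma_Q$ is a probability measure supported on $\Gamma_R \cap 2B_{\wt Q}$ (such a $\wt Q$ with $2B_{\wt Q}\cap\Gamma_R\neq\varnothing$ exists by (b)). Property (a) forces $\mu|_{\good(R)}$ to live on $\Gamma_R$, so $\supp \mu_R \subset \Gamma_R$. Using (c) to control the densities in the tree, together with the bound $\delta_\mu(Q,\wt Q) \lesssim \Theta_\mu(2B_R)$ from (b) (which pins down the size of $\wt Q$ and hence of $\mu(Q)$), I would verify that $\mu_R$ has $n$-growth with constant $\lesssim \Theta_\mu(2B_R)$ and total mass $\mu_R(\R^d)=\mu(R)$. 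Since $\Gamma_R$ is a Lipschitz graph whose slope is bounded uniformly in $R$, the standard $L^2$-theory of the Riesz transform on Lipschitz graphs (respectively the Melnikov--Mattila--Verdera identity when $n=1$) then yields
$$\sup_{\ve>0}\|\RR^n_\ve \mu_R\|_{L^2(\mu_R)}^2 + c^2(\mu_R) \;\lesssim\; \Theta_\mu(2B_R)^2 \,\mu(R).$$

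It remains to dominate the global quantities by $\sum_{R \in \ttt}$ of the above bounds. For curvature, I would partition the triples $(x,y,z)\in(\supp\mu)^3$ according to the smallest $R \in \ttt$ such that $\{x,y,z\}\subset R$: triples entirely in $\good(R)$ are bounded directly by $c^2(\mu_R)$, while triples with some point in a stopping cube $Q \in \nex(R)$ are handled by transferring the mass of $\mu|_Q$ to $\Gamma_R \cap 2B_{\wt Q}$, with the error controlled by $\delta_\mu(Q,\wt Q)\,\mu(Q) \lesssim \Theta_\mu(2B_R)\,\mu(Q)$ and then summed using the disjointness of the $\nex(R)$ families together with $\sum_{Q\in\nex(R)}\mu(Q)\leq\mu(R)$. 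The analogous statement for $\RR^n_\ve \mu$ is obtained by a Cotlar-type telescoping along the chain of $\ttt$-cubes containing a point $x$, replacing $\mu$ by the appropriate $\mu_R$ at each scale.

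The main obstacle lies in controlling the boundary contributions across the stopping cubes: a straightforward error estimate would yield a factor of only $\Theta_\mu(2B_R)\mu(R)$, whereas the packing gives us $\Theta_\mu(2B_R)^2\mu(R)$. The extra factor of $\Theta_\mu(2B_R)$ must be recovered from the second point (for Riesz) or the other two points (for curvature), each of which again lies in the tree of $R$ and contributes density $\approx \Theta_\mu(2B_R)$ thanks to (c). Matching these two density factors simultaneously while correctly handling scales at the interface $\Gamma_R \leftrightarrow \nex(R)$ — using property (b) to bridge stopping cubes to the graph — is the principal technical point of the argument.
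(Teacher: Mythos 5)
The paper does not actually prove Lemma~\ref{lemcorona2}; it simply points the reader to \cite{Tolsa-bilip} (for $n=1$ with the standard dyadic lattice) and to \cite{Gi} (for general $n$ with the David--Mattila lattice). Your overall strategy---build an approximating measure $\mu_R$ on $\Gamma_R$ by keeping $\mu|_{\good(R)}$ and teleporting the mass of each $Q\in\nex(R)$ to $\Gamma_R\cap 2B_{\wt Q}$, invoke the $L^2$-boundedness of the Riesz transform (or the Melnikov--Mattila--Verdera identity for $n=1$) on Lipschitz graphs, and then telescope over the corona chain---is indeed the classical scheme behind those references. So the high-level plan is sound, and you should be given credit for correctly identifying that the second density factor $\Theta_\mu(2B_R)$ must come from the other variable(s) in the kernel rather than from the transfer step alone.

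There are, however, two concrete problems. First, your parenthetical assertion that $\delta_\mu(Q,\wt Q)\lesssim\Theta_\mu(2B_R)$ ``pins down the size of $\wt Q$ and hence of $\mu(Q)$'' is false. The quantity $\delta_\mu(Q,\wt Q)=\int_{2B_{\wt Q}\setminus 2B_Q}|y-x_Q|^{-n}\,d\mu(y)$ can be small even when $\ell(\wt Q)$ is arbitrarily large relative to $\ell(Q)$: it only controls the measure living in the intermediate annulus, not the ratio $\ell(\wt Q)/\ell(Q)$, and it says nothing directly about $\mu(Q)$ (which is controlled instead by property~(c), applied to stopping cubes and their parents). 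Because of this, the $n$-growth estimate for $\mu_R$ with constant $\lesssim\Theta_\mu(2B_R)$, and the teleportation error estimate, cannot be justified by the reasoning you describe; the growth bound must come from the structure of the stopping family (disjointness of $\nex(R)$, property~(c), and the annulus bound), and the teleportation error must be controlled via the chain of intermediate scales for which $\delta_\mu$ is actually the right quantity. Second, you explicitly defer the ``principal technical point''---the interface error estimates across stopping cubes, which in the actual proofs is where the suppressed-kernel machinery of \cite{Tolsa-bilip} or the odd-kernel treatment of \cite{Gi} enter---and so what you have is an accurate outline of the approach rather than a proof: the hardest lemma-level work is acknowledged but not done.
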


For $n=1,d=2$, a version of this result which uses the usual dyadic lattice of $\R^2$ instead of the David-Mattila lattice is proven in \cite{Tolsa-bilip}. For arbitrary $n,d$,
another version in terms of the David-Mattila lattice is shown in \cite{Gi}, which in fact is valid for other singular
integral operators with odd kernel, besides the Cauchy and Riesz transforms. 

\vv

By combining Lemmas \ref{lemcorona} and \ref{lemcorona2} we obtain the following result.

\begin{theorem}\label{teocurv}
Given $V_0\in G(d,d-n)$ and $s>0$, consider the cone $K:=K(V_0,s)\subset\R^d$.
Let $\mu$ be a Borel measure in $\R^d$ satisfying the growth condition
$$\mu(B(x,r))\leq c_0\,r^n\quad\mbox{ for all $x\in\R^d$, $r>0$.}$$
In the case $n=1$
we have
$$c^2(\mu)\lesssim \|\mu\| + \iint_{x-y\in K}\frac1{|x-y|}\,d\mu(x)\,d\mu(y),$$
and for any integer $n\in(0,d)$,
$$\sup_{\ve>0}\|\RR^n_\ve\mu\|_{L^2(\mu)}^2\lesssim \|\mu\| + \iint_{x-y\in K}\frac1{|x-y|^n}\,d\mu(x)\,d\mu(y).$$
The implicit constants in both inequalities depend only on $d$, $n$, $c_0$, and $s$.
\end{theorem}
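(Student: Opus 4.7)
The plan is simply to combine the corona decomposition of Lemma \ref{lemcorona} with the $L^2$ and curvature estimate of Lemma \ref{lemcorona2}, and then use the growth condition to convert a squared-density packing term into the single-density one that Lemma \ref{lemcorona} controls. I may assume without loss of generality that $\iint_{x-y\in K}|x-y|^{-n}\,d\mu(x)\,d\mu(y)<\infty$, as otherwise the inequality is trivial, and by a standard exhaustion (replacing $\mu$ by $\mu|_{B(0,N)}$ and letting $N\to\infty$, using monotone convergence on both sides), I may further assume that $\mu$ has compact support. Setting $R_0=\supp\mu$, I may then regard $R_0$ as the top cube of the David-Mattila lattice $\DD_\mu$, as in the discussion preceding Lemma \ref{lemcorona}.

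Lemma \ref{lemcorona} then produces a family $\ttt\subset\DD_\mu^{db}$ and Lipschitz graphs $\{\Gamma_R\}_{R\in\ttt}$ satisfying properties (a), (b), (c) together with the packing inequality
\begin{equation*}
\sum_{R\in\ttt}\Theta_\mu(2B_R)\,\mu(R)\;\lesssim\; \mu(R_0)+\iint_{x-y\in K}\frac{1}{|x-y|^n}\,d\mu(x)\,d\mu(y).
\end{equation*}
The three structural properties (a)-(c) are precisely the hypotheses of Lemma \ref{lemcorona2}, which therefore gives, for $n=1$,
\begin{equation*}
c^2(\mu)\;\lesssim\;\sum_{R\in\ttt}\Theta_\mu(2B_R)^2\,\mu(R),
\end{equation*}
and for any integer $0<n<d$,
\begin{equation*}
\sup_{\ve>0}\|\RR^n_\ve\mu\|_{L^2(\mu)}^2\;\lesssim\;\sum_{R\in\ttt}\Theta_\mu(2B_R)^2\,\mu(R).
\end{equation*}

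The final ingredient is the pointwise bound $\Theta_\mu(2B_R)\lesssim c_0$, which is immediate from the growth assumption applied to the ball $2B_R$, whose radius is comparable to $\ell(R)$. This lets me trade one power of density for the universal constant $c_0$:
\begin{equation*}
\sum_{R\in\ttt}\Theta_\mu(2B_R)^2\,\mu(R)\;\lesssim\;c_0\sum_{R\in\ttt}\Theta_\mu(2B_R)\,\mu(R),
\end{equation*}
and combining this with the packing estimate yields both claimed bounds (noting $\mu(R_0)=\|\mu\|$ in the compactly-supported reduction). I do not foresee a serious obstacle: the two previous lemmas carry essentially all the analytic weight of the argument, and the only nontrivial bookkeeping is checking that the exhaustion behaves well in the limit, which follows from monotone convergence applied separately to $c^2(\mu_N)$, $\|\RR_\ve^n\mu_N\|_{L^2(\mu_N)}^2$, and the right-hand side integral.
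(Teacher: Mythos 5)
Your proposal is correct and follows essentially the same route as the paper: apply the corona decomposition of Lemma~\ref{lemcorona}, feed its output into Lemma~\ref{lemcorona2}, and then use the growth bound $\Theta_\mu(2B_R)\lesssim c_0$ to pass from the squared-density packing sum to the linear one. The only small caveat is in the exhaustion step: for the Riesz transform term, $\|\RR^n_\ve\mu_N\|_{L^2(\mu_N)}^2$ is not monotone in $N$ (the kernel is signed), so Fatou's lemma rather than monotone convergence is the tool that yields the needed one-sided inequality in the limit.
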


\begin{proof}
From Lemmas \ref{lemcorona} and \ref{lemcorona2}, in the case $n=1$ we deduce the following:
\begin{align*}
c^2(\mu)&\lesssim \|\mu\| + \sum_{R\in\ttt}\Theta_\mu(2B_R)^2\,\mu(R)\lesssim \|\mu\| +\sum_{R\in\ttt}\Theta_\mu(2B_R)\,\mu(R)\\
&\lesssim \|\mu\| + \iint_{x-y\in K}\frac1{|x-y|}\,d\mu(x)\,d\mu(y),
\end{align*}
and analogously, for any integer $n\in(0,d)$, with $\sup_{\ve>0}\|\RR^n_\ve\mu\|_{L^2(\mu)}^2$ instead of $c^2(\mu)$.
\end{proof}

\vv

\subsection{Analytic capacity}

To prove Theorem \ref{teo1}, recall that by Theorem \ref{teogam}, for any compact set $E\subset\C$ we have
\begin{equation}\label{eqgam1}
\gamma(E)\approx \sup\bigl\{\sigma(E):\sigma\in L_1(E),\,c^2(\sigma)\leq \sigma(E)\bigr\},
\end{equation}
where $L_1(E)$ stands for the set of positive Borel measures supported on $E$ satisfying $\sigma(B(x,r))\leq r$
for all $x\in E$, $r>0$.

Let $\mu$ be a measure supported on $E$ such that $0 < \int_I \|P_\theta\mu\|_2^2\,d\theta<\infty$, and denote
$$\lambda = \frac1{\mu(E)}\int_I \|P_\theta\mu\|_2^2\,d\theta.$$
We intend to construct a suitable measure $\sigma$ with linear growth from $\mu$, and then we will apply \rf{eqgam1} to 
$\sigma$.

Let $\theta_0\in I$ be such that 
$$\|P_{\theta_0}\mu\|_2^2\leq \frac1{\HH^1(I)}\int_I \|P_\theta\mu\|_2^2\,d\theta = \frac{\lambda\,\mu(E)}{\HH^1(I)}.$$
Denote $\eta=P_{\theta_0}\mu$ and let $L_{\theta_0}=\{re^{i\theta_0}:r\in\R\}$. Observe that the preceding estimate is equivalent
to
$$\int_{L_{\theta_0}} \left|\frac{d\eta}{dr}\right|^2\,dr = \int_{P_{\theta_0}(E)} \frac{d\eta}{dr}\,d\eta(r)\leq\frac{\lambda\,\eta(P_{\theta_0}(E))}{\HH^1(I)}.$$
So by Chebyshev's inequality we have 
$$\eta\Bigl\{r \in L_{\theta_0}:\frac{d\eta}{dr}(r) > \frac{2\,\lambda}{\HH^1(I)}\Bigr\}
\leq \frac12\,\eta(P_{\theta_0}(E)).$$
Hence there exists a compact set $F_0$  contained in 
$$\Bigl\{r \in L_{\theta_0}:\frac{d\eta}{dr}(r) \leq \frac{2\,\lambda}{\HH^1(I)}\Bigr\}$$
such that $\eta(F_0)\geq \frac14\,\eta(P_{\theta_0}(E)) = \frac14\,\mu(E)$. Clearly, 
$$\eta(F_0\cap B(x,s))\leq \frac{4\,\lambda}{\HH^1(I)}\,s\quad \mbox{ for all $x\in\C$, $s>0$.}$$ 

Next we consider the closed set $F= P_{\theta_0}^{-1}(F_0)\cap\supp\mu$, and the measure
$$
\sigma= \frac{\HH^1(I)}{4\,\lambda}\,\mu|_F.
$$
Note that 
\begin{equation}\label{eqsigmu4}
\sigma(F) = \frac{\HH^1(I)}{4\,\lambda}\,\mu(F) = \frac{\HH^1(I)}{4\,\lambda}\,\eta(F_0)\geq \frac{\HH^1(I)}{16\,\lambda}\,\mu(E).\end{equation}
Further,
for any $x\in\supp\sigma$ and $s>0$,
$$\sigma(B(x,s))\leq \sigma\bigl(P_{\theta_0}^{-1}(P_{\theta_0}(B(x,s)))\bigr) = \frac{\HH^1(I)}{4\,\lambda}\,
\eta(P_{\theta_0}(F\cap B(x,s)))\leq s,$$
and so $\sigma$ has linear growth with constant $1$.
Also, by the definition of $\lambda$ and \rf{eqsigmu4},
\begin{align*}
\int_{I} \|P_\theta\sigma\|_2^2\,d\theta &= \left(\frac{\HH^1(I)}{4\,\lambda}\right)^2\int_{I} \|P_\theta(\mu|_F)\|_2^2\,d\theta\\
& \leq \left(\frac{\HH^1(I)}{4\,\lambda}\right)^2\int_{I} \|P_\theta\mu\|_2^2\,d\theta = \frac{\HH^1(I)^2}{16\,\lambda}\,\mu(E)
\leq \HH^1(I)\,\sigma(F).
\end{align*}
Hence, by Theorem \ref{teocurv} and Corollary \ref{corofourier}, we deduce that
$$c^2(\sigma)\lesssim \sigma(F) + \iint_{x-y\in K_{I^\bot}}\frac1{|x-y|}\,d\sigma(x)\,d\sigma(y)
 \lesssim \sigma(F) + \int_{I} \|P_\theta\sigma\|_2^2\,d\theta \leq C_I\,\sigma(F),
$$
where the constant $C_I$ depends only on $\HH^1(I)$.
Then, from \rf{eqgam1} and \rf{eqsigmu4}, we deduce that
$$\gamma(E)\geq\gamma(F)\gtrsim \sigma(F) \gtrsim\frac{\mu(E)}{\lambda} = \frac{\mu(E)^2}{\int_I \|P_\theta\mu\|_2^2\,d\theta},$$
with the implicit constants depending on $\HH^1(I)$. This concludes the proof of Theorem \ref{teo1}.

\vv

\subsection{The capacities $\Gamma_{d,n}$}

The proof of Theorem \ref{teo2} is analogous to the one of Theorem \ref{teo1}. The only difference is that we have to
replace the curvature $c^2(\mu)$ by $\sup_{\ve>0}\|\RR^n_\ve\mu\|_{L^2(\mu)}^2$, and use Theorem \ref{teokap} and Corollary
\ref{corofourier2} instead of Theorem \ref{teogam} and Corollary \ref{corofourier}, respectively. We skip the details.

\appendix
\section{The reverse inequality}\label{section:reverse-ineq}

If $\mu$ is a Radon measure on $\R^d$, we define
$$\Theta^{n,*}(x,\mu) = \limsup_{r\to0}\frac{\mu(B(x,r))}{r^n} 
\qquad\text{and}\qquad M_n\mu(x) = \sup_{r>0}\frac{\mu(B(x,r))}{r^n}.$$
In this appendix, we prove the following inequality, which is not used in the paper but may be of independent interest.

\begin{lemma}
Let $V_0 \in G(d, n)$ and $s > 0$. Then there exist $\lambda>1$ and $c$ (depending on $d, n, s$) such that the following holds: If $\mu$ is a  Radon measure on $\R^d$ such that
\begin{equation}\label{eqins5'}
\int M_n\mu(x)\,d\mu(x)<\infty,
\end{equation}
then
\begin{equation}\label{eqins4'}
\int_{B(V_0, s)} \| P_V\mu \|_2^2 \, d \gamma_{d,n}(V) \leq c
 \iint_{x-y \in K(V_0^\bot,\lambda s)}\frac{ d\mu (x)\,  d\mu (y)}{|x-y|^n} + 
c \int \Theta^{n,*}(x,\mu) \,d\mu(x)
.
\end{equation}
\end{lemma}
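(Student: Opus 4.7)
The plan is to obtain the result by regularizing $\mu$, applying the Schwartz-function version of the corollary, and then tracking what happens to both sides as the regularization is removed. Without loss of generality we may assume $\mu$ has compact support (truncate by $\mu|_{B(0,R)}$ and let $R\to\infty$ at the end, using monotone convergence on both sides). Fix a radial, non-negative, $C_c^\infty$ bump $\phi$ with $\int\phi=1$ (or a Gaussian, so that $\wh\phi$ is real, positive and satisfies $\wh\phi(0)=1$), write $\phi_\ve(x)=\ve^{-d}\phi(x/\ve)$, and set $\mu_\ve:=\mu*\phi_\ve$, which is Schwartz. Corollary \ref{corofourier1} applied to $\mu_\ve$ gives
$$\int_{B(V_0,s)}\|P_V\mu_\ve\|_2^2\,d\gamma_{d,n}(V)\;\le\; c\iint_{x-y\in K(V_0^\bot,\lambda_0 s)}\frac{\mu_\ve(x)\mu_\ve(y)}{|x-y|^n}\,dx\,dy$$
for some constant $\lambda_0>1$ depending only on $d,n,s$. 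The left-hand side equals $\int_{B(V_0,s)}\int_V|\wh\mu(\xi)|^2|\wh\phi(\ve\xi)|^2\,d\xi\,d\gamma_{d,n}(V)$ by Plancherel, and since $|\wh\phi(\ve\xi)|^2\to 1$ pointwise with $|\wh\phi|\le 1$, Fatou's lemma yields
$$\int_{B(V_0,s)}\|P_V\mu\|_2^2\,d\gamma_{d,n}(V)\;\le\;\liminf_{\ve\to 0}\int_{B(V_0,s)}\|P_V\mu_\ve\|_2^2\,d\gamma_{d,n}(V).$$

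The right-hand side of the corollary equals $\iint k_\ve(x-y)\,d\mu(x)\,d\mu(y)$ with $k_\ve:=k*\phi_\ve*\phi_\ve$, where $k(z)=\chi_{K(V_0^\bot,\lambda_0 s)}(z)|z|^{-n}$. I will split this integral at the scale $|x-y|=2\ve$. For $|z|>2\ve$ a direct calculation gives two properties of $k_\ve$: first, for any $w$ in (the effective support of) $\phi_\ve*\phi_\ve$ one has $|z-w|\approx |z|$, so $k_\ve(z)\lesssim |z|^{-n}$; second, the convolution broadens the supporting cone by at most a factor comparable to $1$, so $k_\ve$ vanishes outside $K(V_0^\bot,\lambda s)$ for some $\lambda>\lambda_0$ depending only on $d,n,s$. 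Hence
$$\iint_{|x-y|>2\ve}k_\ve(x-y)\,d\mu(x)\,d\mu(y)\;\lesssim\iint_{x-y\in K(V_0^\bot,\lambda s)}\frac{d\mu(x)\,d\mu(y)}{|x-y|^n}$$
uniformly in $\ve$, giving the first term of the target bound.

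For $|z|\le 2\ve$ we use the trivial estimate $\|k*\phi_\ve*\phi_\ve\|_\infty\lesssim \ve^{-n}$ (obtained from $\|\phi_\ve*\phi_\ve\|_\infty\lesssim\ve^{-d}$ and the integrability of $|w|^{-n}$ over $B(0,C\ve)$ in $\R^d$ with $d>n$). Thus
$$\iint_{|x-y|\le 2\ve}k_\ve(x-y)\,d\mu(x)\,d\mu(y)\;\lesssim\;\ve^{-n}\int\mu(B(x,2\ve))\,d\mu(x).$$
The integrand on the right is dominated by $2^n M_n\mu(x)\in L^1(\mu)$ by hypothesis \eqref{eqins5'}, and it converges pointwise to $2^n\Theta^{n,*}(x,\mu)$ along a $\limsup$. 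Reverse Fatou (the dominated version of Fatou applied to $\limsup$) therefore yields
$$\limsup_{\ve\to 0}\;\ve^{-n}\int\mu(B(x,2\ve))\,d\mu(x)\;\lesssim\;\int\Theta^{n,*}(x,\mu)\,d\mu(x).$$
Combining the two regions and letting $\ve\to 0$ in the corollary produces \eqref{eqins4'}. The only genuinely delicate point, and the main obstacle, is this last near-diagonal step: the smoothing turns the singular kernel into an $\ve^{-n}$ term whose integral against $\mu\otimes\mu$ of the $2\ve$-neighbourhood of the diagonal may blow up, and the hypothesis $\int M_n\mu\,d\mu<\infty$ is used precisely to legitimize the dominated passage to $\Theta^{n,*}$.
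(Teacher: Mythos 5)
Your overall strategy matches the paper's: regularize $\mu\mapsto\mu_\ve=\mu*\phi_\ve$, apply the Schwartz-function Corollary~\ref{corofourier1} to $\mu_\ve$, use Fatou on the left-hand side, and control the right-hand side by splitting into a far-from-diagonal piece (absorbed by the conical energy of $\mu$) and a near-diagonal piece (controlled by $\int M_n\mu\,d\mu$ and, in the limit, by $\int\Theta^{n,*}\,d\mu$ via reverse Fatou). The reduction to compact support is a reasonable extra step, and the reverse-Fatou passage is exactly the mechanism the paper uses.

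However, there is a genuine (though fixable) gap in your far-diagonal estimate, and it concerns the threshold $|x-y|=2\ve$. You write $k_\ve=k*\phi_\ve*\phi_\ve$ with $k=\chi_{K(V_0^\bot,\lambda_0 s)}|\cdot|^{-n}$ and claim that, for $|z|>2\ve$, one has $|z-w|\approx|z|$ for $w$ in the support of $\phi_\ve*\phi_\ve$, and moreover that $k_\ve$ vanishes outside a cone $K(V_0^\bot,\lambda s)$ ``broadened by a factor comparable to $1$.'' Neither statement holds near the threshold. If $|z|$ is only slightly larger than $2\ve$, then $|z-w|$ can be as small as $|z|-2\ve\ll|z|$, so the asserted comparability fails. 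More importantly, since $\supp k_\ve\subset K(V_0^\bot,\lambda_0 s)+B(0,2\ve)$, a point $z$ with $|z|$ just above $2\ve$ can lie in $\supp k_\ve$ while making an angle with $V_0^\bot$ of order $2\ve/|z|\approx1$; that is, the angular broadening of the support at scale $|z|\approx\ve$ is of order $1$, not of order $s$. For small $s$ the ``broadened cone'' $K(V_0^\bot,\lambda_0 s+O(1))$ is all of $\R^d$, and the conclusion
$\iint_{|x-y|>2\ve}k_\ve(x-y)\,d\mu\,d\mu\lesssim\iint_{x-y\in K(V_0^\bot,\lambda s)}|x-y|^{-n}\,d\mu\,d\mu$
does not follow.

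The fix is exactly what the paper does: split at scale $A\ve$ with $A$ chosen large enough \emph{depending on $s$} (roughly $A\gtrsim 1/s$), so that for $|z|>A\ve$ the angular broadening of the support is $O(1/A)\lesssim s$, hence $\supp k_\ve\cap\{|z|>A\ve\}\subset K(V_0^\bot,2\lambda_0 s)$, and simultaneously $|z-w|\geq(1-2/A)|z|\geq\frac12|z|$, which gives $k_\ve(z)\leq 2^n|z|^{-n}$. (The paper implements this by splitting the kernel $k=f_C+f_F$ at radius $A\ve$ and convolving each piece with $\psi_\ve=\phi_\ve*\phi_\ve$, rather than splitting the integration domain; the two splits are essentially equivalent once $A$ is large.) Your near-diagonal estimate is then carried out over $|x-y|\leq A\ve$, with $\|k_\ve\|_\infty\lesssim_A\ve^{-n}$ and $\mu(B(x,A\ve))/\ve^n\leq A^n M_n\mu(x)$, and the rest of your argument goes through unchanged. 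One further small caveat: if you choose $\phi$ to be a Gaussian (as you suggest as an alternative), $\phi_\ve*\phi_\ve$ is not compactly supported and $k_\ve$ does not vanish outside any cone; you should use the compactly supported $C^\infty_c$ bump as the paper does.
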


\begin{proof}
Fix $\phi : \R^d \to \R$ a $C^\infty$ radial bump function with support in $B(0,1)$. For $\ve>0$, let $\phi_\ve(x) = \frac{1}{\ve^d} \phi(\frac{x}{\ve})$.
Denote $\mu_\ve = \mu * \phi_\ve$.
By Corollary \ref{corofourier1}, there exists $\lambda, c > 1$ such that
\begin{equation}\label{eqapp22}
\int_{B(V_0, s)} \| P_V\mu_\ve \|_2^2 \, d \gamma_{d,n}(V)\\  \leq c\iint_{x-y \in K(V_0^\bot,\lambda s)}
\frac{d\mu_\ve(x) \, d\mu_\ve(y)}{|x-y|^n}
.
\end{equation}


First, we will prove that, for any $\ve>0$,
\begin{equation}\label{eqff81}
\iint_{x-y \in K(V_0^\bot,\lambda s)}
\frac{d\mu_\ve(x) \, d\mu_\ve(y)}{|x-y|^n}
\leq 
2
\iint_{x-y \in K(V_0^\bot,2\lambda s)}
\frac{d\mu(x) \, d\mu(y)}{|x-y|^n}
 + 
C\, \int \frac{\mu(B(x,C\ve))}{\ve^n} \,d\mu(x).
\end{equation}
(Here and in what follows, $C$ is independent of $\ve$, and it may change from line to line.) 

For fixed constants $\ve>0$ and $A>10$, denote 
$$f(y)=\chi_{K(V_0^\bot,\lambda s)}(y)\,\frac1{|y|^n},\qquad f_C(y) = f(y)\,\chi_{|y|\leq A\ve},
\qquad f_F(y) = f(y)\,\chi_{|y|> A\ve},
$$
so that 
\begin{align*}
\iint_{x-y \in K(V_0^\bot,\lambda s)}
\frac{d\mu_\ve(x) \, d\mu_\ve(y)}{|x-y|^n}
 & = \int f*\mu_\ve\,d\mu_\ve = \int 
f*\mu*\phi_\ve*\phi_\ve\,d\mu \\
& = \int 
(f_C+f_F)*\mu*\phi_\ve*\phi_\ve\,d\mu.
\end{align*}
We will show that, for any $x\in\R^{d}$,
\begin{equation}\label{eqff91}
f_C*\mu*\phi_\ve*\phi_\ve(x)\leq C(A)\,\frac{\mu(B(x,C(A)\ve))}{\ve^n}
\end{equation}
and
\begin{equation}\label{eqff92}
f_F*\mu*\phi_\ve*\phi_\ve(x)\leq 2 \int_{x-y\in K(V_0^\bot,2\lambda s)} \frac{d\mu(y)}{|x-y|^n},
\end{equation}
if $A$ is sufficiently large. Clearly, \rf{eqff81} follows from the two preceding estimates.

First we deal with the estimate \rf{eqff91}. Let $\psi = \phi * \phi$ and $\psi_\ve(x) = \frac{1}{\ve^d}\psi(\frac{x}{\ve})$, so that $\psi_\ve = \phi_\ve*\phi_\ve$.
For any $z\in\R^d$,
\begin{align}\label{eq:fC-conv}
f_C*\psi_\ve(z) 
\leq 
\int_{|z-y|\leq A\ve} \frac1{|z-y|^n}\,\psi_\ve(y)\,dy
\leq 
\frac{C(A)}{\ve^n}\,\chi_{B(0,(A+2)\ve)}(z)
,
\end{align}
taking into account that $\supp\psi_\ve\subset B(0,2\ve)$ and $\| \psi_\ve \|_\infty \leq c_{\phi} \, \ve^{-d}$. It is clear that \eqref{eq:fC-conv} implies \rf{eqff91}.

To prove \rf{eqff92}, first observe that since $\supp f_F \subset K(V_0^\bot,\lambda s) \cap B(0, A\ve)^c$ and $\supp \psi_\ve \subset B(0, 2\ve)$, it follows that $\supp (f_F * \psi_\ve)$ is contained in the $2\ve$-neighborhood of $K(V_0^\bot,\lambda s) \cap B(0, A\ve)^c$. Therefore, by geometric arguments, we have
\[
\supp f_F * \psi_\ve \subset
K(V_0^\bot,\lambda s + CA^{-1})\subset K(V_0^\bot,2\lambda s )
\]
assuming $A$ big enough.

Next, suppose $x \in \supp (f_F * \psi_\ve)$ and $x' \in B(x, 2\ve)$. Since $|x| \geq (A-2)\ve$, we have $|x'|\geq \frac12\,|x|$ so $f_F(x') \leq \frac{2}{|x|^n}$. Hence, for all $x \in \R^d$,
\[
f_F*\psi_\ve(x)
\leq 
\sup_{x'\in B(x,2\ve)} f_F(x')
\leq
2\frac{\chi_{K(V_0^\bot,2\lambda s)}(x)}{|x|^n}
,
\]
which yields \rf{eqff92}, and completes the proof of \rf{eqff81}.

\vv

By Fatou's lemma applied to $M_n(x) - \frac{\mu(B(x,C\ve))}{\ve^n}$ and hypothesis \rf{eqins5'}, we have
$$\limsup_{\ve\to0}\int \frac{\mu(B(x,C\ve))}{\ve^n}\,d\mu(x)\leq 
\int \limsup_{\ve\to0}\frac{\mu(B(x,C\ve))}{\ve^n}\,d\mu(x) = C\int \Theta^{n,*}(x,\mu)\,d\mu(x).$$
Taking the limsup in \rf{eqff81} as $\ve\to0$ and using \rf{eqapp22}, we obtain 
\begin{equation}\label{eqapp1}
\limsup_{\ve\to 0} 
\int_{B(V_0, s)} \| P_V\mu_\ve \|_2^2 \, d \gamma_{d,n}(V)
\leq
C
 \iint_{x-y \in K(V_0^\bot,2\lambda s)}\frac{ d\mu (x)\,  d\mu (y)}{|x-y|^n} + 
C \int \Theta^{n,*}(x,\mu) \,d\mu(x).
\end{equation}

\vv
Now we claim that
\begin{equation}\label{eqapp93}
\int_{B(V_0, s)} \| P_V\mu \|_2^2 \, d \gamma_{d,n}(V) 
= 
\lim_{\ve\to0}
\int_{B(V_0, s)} \| P_V\mu_\ve \|_2^2 \, d \gamma_{d,n}(V)
.
\end{equation}
Note that \eqref{eqapp1} and \eqref{eqapp93} together imply \rf{eqins4'} with $2\lambda$ in place of $\lambda$.

Let $\sigma$ be the measure on $\R^d$ given by
\begin{align*}
\int f \, d\sigma
&=
\int_{B(V_0,s)} \int_V f \, d\LL^n \, d\gamma_{d,n}(V),
\end{align*}
Then arguing analogously as in the proof of \rf{eq843}, we have
\begin{equation}\label{eqffoo1}
\int_{B(V_0, s)} \| P_V\mu_\ve \|_2^2 \, d \gamma_{d,n}(V) 
=   \int |\wh\mu(x)\,\wh\phi(\ve x)|^2 \, d\sigma(x),
\end{equation}
and
\begin{equation}\label{eqffoo2}
\int_{B(V_0, s)} \| P_V\mu \|_2^2 \, d \gamma_{d,n}(V) 
=   \int |\wh\mu(x)|^2 \, d\sigma(x).
\end{equation}

We split the proof of \eqref{eqapp93} into two cases. Suppose first that $\int_{B(V_0, s)} \| P_V\mu \|_2^2 \, d \gamma_{d,n}(V)  < \infty$. In this case, the dominated convergence theorem gives us
\begin{align*}
\lim_{\ve\to0} 
\int_{B(V_0, s)} \| P_V\mu_\ve \|_2^2 \, d \gamma_{d,n}(V) 
&=   
\lim_{\ve\to0} 
\int |\wh\mu(x)\,\wh\phi(\ve x)|^2 \, d\sigma(x)
\\
& =
   \int |\wh\mu(x)|^2 \, d\sigma(x)
    =
\int_{B(V_0, s)} \| P_V\mu \|_2^2 \, d \gamma_{d,n}(V) 
,
\end{align*}
which proves \eqref{eqapp93} in this case.

Now we consider the case $\int_{B(V_0, s)} \| P_V\mu \|_2^2 \, d \gamma_{d,n}(V) = \infty$. By Fatou's lemma,
\begin{align*}
\liminf_{\ve\to0} 
\int_{B(V_0, s)} \| P_V\mu_\ve \|_2^2 \, d \gamma_{d,n}(V) 
&=   
\liminf_{\ve\to0} 
\int |\wh\mu(x)\,\wh\phi(\ve x)|^2 \, d\sigma(x)
\\
& \geq
\int \liminf_{\ve\to0}  |\wh\mu(x)\,\wh\phi(\ve x)|^2 \, d\sigma(x)
\\
&=
\int |\wh\mu(x)|^2 \, d\sigma(x)
\\
&=
\int_{B(V_0, s)} \| P_V\mu \|_2^2 \, d \gamma_{d,n}(V)
\\
&=
\infty 
,
\end{align*}
which proves \eqref{eqapp93} in this case. This completes the proof of \eqref{eqapp93}.
\end{proof}

\vvv

\end{document}